\newcommand*\Ccal{\mathcal C}
\newcommand*\Mcal{\mathcal M}
\newcommand*\Pcal{\mathcal P}
\newcommand*\nsubset{\not\subset}
\newcommand*\nequiv{\not\equiv}
\newcommand*\Nbb{\mathbb N}
\newcommand*\Qbb{\mathbb Q}
\newcommand*\Rbb{\mathbb R}
\newcommand*\Cbb{\mathbb C}
\newcommand*\loc{\mathrm{loc}}
\newcommand*\PI{\mathrm{PI}}
\newcommand*\dbl{\mathrm{dbl}}
\newcommand*\lsc{\mathrm{lsc}}
\newcommand*\emb{\hookrightarrow}
\newcommand*\fcrim{\hspace{0.08333em}}
\newcommand*\Lip{\mathrm{Lip}}
\newcommand*\Lipc{\mathrm{Lip_c}}
\newcommand*\AC{\mathrm{AC}}
\newcommand*\dd{\mathrm{d}}
\newcommand*\cconc{c_{\scriptscriptstyle{\!\vartriangle}}}
\newcommand*\cemb{c_{\mathrm{emb}}}
\newcommand*\upto{\nearrow}
\newcommand*\downto{\searrow}
\newcommand*{\limplus}{{\mathchoice{\raise.17ex\hbox{$\scriptstyle +$}}
                {\raise.17ex\hbox{$\scriptstyle +$}}
                {\raise.1ex\hbox{$\scriptscriptstyle +$}}
                {\scriptscriptstyle +}}}
\newcommand*\NX{{N^1\!X}}
\newcommand*\NnX{{N_0^1 X}}
\newcommand*\NtX{{\widetilde{N}^1\!X}}
\newcommand*\eps{\varepsilon}
\newcommand*\Mod{\mathop{\mathrm{Mod}}\nolimits}
\newcommand*\diam{\mathop{\mathrm{diam}}\nolimits}
\newcommand*\dist{\mathop{\mathrm{dist}}\nolimits}
\newcommand*\spt{\mathop{\mathrm{spt}}\nolimits}
\newcommand*\sgn{\mathop{\mathrm{sgn}}\nolimits}
\newcommand*\esssup{\mathop{\mathrm{ess\,sup}}}
\newcommand*\ri{r.i.\@ }
\newcommand*{\coloneq}{\mathrel{\vcenter{\baselineskip0.5ex \lineskiplimit0pt\hbox{\scriptsize.}\hbox{\scriptsize.}}}=}
\renewcommand*{\cdots}{\mathinner{\vcenter{\hbox{$\ldots$}}}}
\newcommand*{\eqcolon}{=\mathrel{\vcenter{\baselineskip0.5ex \lineskiplimit0pt
                     \hbox{\scriptsize.}\hbox{\scriptsize.}}}}
\newcommand*{\vvvert}{|\mkern-2mu|\mkern-2mu|}
\newcommand*{\meas}[1]{\mu \mathopen{}\left(#1\right)\mathclose{}}
\newcommand*{\meass}[1]{\mu (#1)}
\newcommand*{\swidetilde}[1]{\smash{\widetilde{#1}}}
\renewcommand*{\theenumi}{(\alph{enumi})}
\newcommand*{\itoverline}[1]{\skew{3}{\overline}{#1}}
\newcommand*{\ittoverline}[1]{\skew{2}{\overline}{#1}}
\theoremstyle{plain}
\newtheorem{thm}{Theorem}[section]
\newtheorem{lem}[thm]{Lemma}
\newtheorem{pro}[thm]{Proposition}
\newtheorem{cor}[thm]{Corollary}
\theoremstyle{definition}
\newtheorem{df}[thm]{Definition}
\newtheorem{exa}[thm]{Example}
\newtheorem{rem}[thm]{Remark}
\numberwithin{equation}{section}
\begin{document}
%
%
%
%
\begin{abstract}
Newtonian spaces generalize first-order Sobolev spaces to abstract metric measure spaces. In this paper, we study regularity of Newtonian functions based on quasi-Banach function lattices. 
Their (weak) quasi-continuity is established, assuming density of continuous functions. The corresponding Sobolev capacity is shown to be an outer capacity. Assuming sufficiently high integrability of upper gradients, Newtonian functions are shown to be (essentially) bounded and (H\"older) continuous. Particular focus is put on the borderline case when the degree of integrability equals the ``dimension of the measure''. If Lipschitz functions are dense in a Newtonian space on a proper metric space, then locally Lipschitz functions are proven dense in the corresponding Newtonian space on open subsets, where no hypotheses (besides being open) are put on these sets.
\end{abstract}
%
%
%
%
\title[Fine properties of Newtonian functions and the Sobolev capacity on metric spaces]{Fine properties of Newtonian functions and\\the Sobolev capacity on metric measure spaces}
\author{Luk\'{a}\v{s} Mal\'{y}}
\date{April 28, 2014}
\subjclass[2010]{Primary 46E35; Secondary 28A12, 30L99, 46E30.}
\keywords{Newtonian space, Sobolev-type space, metric measure space, Banach function lattice, Sobolev capacity, quasi-continuity, outer capacity, locally Lipschitz function, continuity, doubling measure, Poincar\'e inequality}
\address{Department of Mathematics\\Link\"{o}ping University\\SE-581~83~Link\"{o}ping\\Sweden}
\address{Department of Mathematical Analysis\\Faculty of Mathematics and Physics\\Charles University in Prague\\Sokolovsk\'a 83\\CZ-186~75~Praha 8\\Czech Republic}
\email{lukas.maly@liu.se}
\maketitle{}
%
%
%
%
\section{Introduction}
\label{sec:intro}
First-order analysis in metric measure spaces requires a generalization of Sobolev spaces as the notion of a (distributional) gradient relies on the linear structure of $\Rbb^n$. The Newtonian approach makes use of the so-called upper gradients and weak upper gradients, which were originally introduced by Heinonen and Koskela~\cite{HeiKos0} and Koskela and MacManus~\cite{KosMac}, respectively. Shanmugalingam~\cite{ShaPhD,Sha} established the foundations for the Newtonian spaces $N^{1,p}$, based on the $L^p$ norm of a function and its (weak) upper gradient and hence corresponding to the classical Sobolev spaces $W^{1,p}$, cf.\@ Bj\"orn and Bj\"orn~\cite{BjoBjo} or Heinonen, Koskela, Shanmugalingam and Tyson~\cite{HeiKosShaTys}. Various authors have developed the elements of the Newtonian theory based on function norms other than $L^p$ in the past two decades, see e.g.\@~\cite{CosMir,Dur,HarHasPer,Tuo}. So far, foundations of the theory in utmost generality were obtained by Mal\'y in~\cite{Mal1,Mal2}, where complete quasi-normed lattices of measurable functions were considered as the base function spaces.
The question when Newtonian functions can be regularized using Lipschitz truncations lied in the focus of Mal\'y~\cite{Mal3}. The current paper goes further and studies regularity properties of Newtonian functions and of the corresponding Sobolev capacity.

One of the main points of interest is the so-called \emph{quasi-continuity}, which can be understood as a Luzin-type condition, where a set of arbitrarily small capacity can be found for each Newtonian function so that its restriction to the complement of that set is continuous. Existence of quasi-continuous representatives was first shown by Deny~\cite{Den} for functions of the unweighted Sobolev space $W^{1,2}(\Rbb^n, dx)$. An analogous result in Sobolev spaces $W^{1,p}(\Rbb^n, dx)$ is given in Federer and Ziemer~\cite{FedZie}, see also Mal\'y and Ziemer~\cite{MalZie}, and the situation in weighted Sobolev spaces is discussed in Heinonen, Kilpel\"ainen, Martio~\cite{HeiKilMar}. In metric spaces, Shanmugalingam~\cite{Sha} showed that Newtonian functions in $N^{1,p}$ have quasi-continuous representatives if the metric space is endowed with a doubling measure and supports a $p$-Poincar\'e inequality (see Definition~\ref{df:PI} below). The hypotheses were weakened in Bj\"orn, Bj\"orn and Shanmugalingam~\cite{BjoBjoSha}, where density of continuous functions was proven sufficient to obtain existence of quasi-continuous representatives of $N^{1,p}$ functions. The current paper provides an analogous result for the Newtonian space $\NX$ built upon an arbitrary quasi-Banach function lattice $X$.

Furthermore in~\cite{BjoBjoSha}, all $N^{1,p}$ functions were proven to be quasi-continuous given that the metric space is \emph{proper} (i.e., if all bounded closed sets are compact). In order to show a similar property of all $\NX$ functions in proper metric spaces, the quasi-Banach function lattice $X$ needs to possess the \emph{Vitali--Carath\'eodory property} (i.e., the quasi-norm of a function can be approximated by the quasi-norms of its lower semicontinuous majorants). Bj\"orn, Bj\"orn and Mal\'y~\cite{BjoBjoMal} give counterexamples that show that this property is vital.

Since the Vitali--Carath\'eodory property is crucial for the presented results, we will look into the question when a general quasi-Banach function lattice $X$ possesses it. Vitali~\cite{Vit} proved that the $L^1$ norm of a measurable function on $\Rbb^n$ can be approximated by the $L^1$ norms of its lower semicontinuous majorants. His result can be easily generalized to $L^p(\Rbb^n)$ with $0<p<\infty$. We will show that it suffices that $X$ contains all simple functions (with support of finite measure) and these have absolutely continuous quasi-norm. Moreover, counterexamples are provided when~$X$ violates either of these two conditions.

Quasi-continuity of Newtonian functions in $\NX$ is closely connected with regularity of the Sobolev capacity $C_X$. Namely, under the assumption that continuous functions are dense in the Newtonian space $\NX$, all Newtonian functions are quasi-continuous if and only if $C_X$ (or an equivalent capacity in case $X$ is merely quasi-normed) is an outer capacity. Actually, the density of continuous functions need not be assumed in the forward implication.

Furthermore, quasi-continuity can be applied to show that locally Lipschitz functions are dense in a Newtonian space on any open subset of a metric space provided that locally Lipschitz functions are dense in the Newtonian space on the entire metric space. The noteworthy part of this claim is that the open subset as a metric subspace need not support any Poincar\'e inequality and the restriction of the measure need not be doubling any more. In general, it is however impossible to obtain density of Lipschitz functions.

It was observed already by Morrey~\cite{Mor} in 1940 that the classical Sobolev functions in $\Rbb^n$ have (H\"older) continuous representatives if the degree of summability of the weak gradients is sufficiently high compared to the dimension. A similar result based on a $p$-Poincar\'e inequality was obtained by Haj\l{}asz and Koskela~\cite{HajKos} in metric spaces endowed with a doubling measure, after introducing an analogue of the dimension. As we are considering Newtonian spaces based on general function lattices, our tools suffice to study the borderline case when the degree of summability (in terms of a Banach function lattice quasi-norm) of upper gradients is essentially equal to the ``dimension'' of a doubling measure. We will establish conditions that guarantee that all Newtonian functions are essentially bounded and have continuous representatives (in equivalence classes given by equality up to sets of capacity zero). If the metric measure space is in addition locally compact, then all Newtonian functions are in fact continuous.

The structure of the paper is the following. Section~\ref{sec:prelim} provides an overview of the used notation and preliminaries in the area of quasi-Banach function lattices and Newtonian spaces. In Section~\ref{sec:capacity}, we study the Sobolev capacity, still without the assumption on density of continuous functions. After that, in Section~\ref{sec:quasicontinuity}, we move on to quasi-continuity and its consequences for the Sobolev capacity and continuity of Newtonian functions. Density of locally Lipschitz functions on general open sets is shown in Section~\ref{sec:density}. A very short introduction to rearrangement-invariant spaces is provided in Section~\ref{sec:boundedness}, whose main focus however lies in establishing sufficient conditions for Newtonian functions to be essentially bounded. In Section~\ref{sec:continuity}, existence of continuous representatives and continuity of all representatives is discussed. Several lemmata for calculus of (minimal) weak upper gradients are given in the appendix.
%
%
%
%
\section{Preliminaries}
\label{sec:prelim}
We assume throughout the paper that $\Pcal = (\Pcal, \dd, \mu)$ is a metric measure space equipped with a metric $\dd$ and a $\sigma$-finite Borel regular measure $\mu$ such that every ball in $\Pcal$ has finite positive measure. In our context, Borel regularity means that all Borel sets in $\Pcal$ are $\mu$-measurable and for each $\mu$-measurable set $A$ there is a Borel set $D\supset A$ such that $\meas{D} = \meas{A}$. Since $\mu$ is Borel regular and $\Pcal$ can be decomposed into countably many (possibly overlapping) open sets of finite measure, it is outer regular, see Mattila~\cite[Theorem 1.10]{Mat}.

The open ball centered at $x\in \Pcal$ with radius $r>0$ will be denoted by $B(x,r)$. Given a ball $B=B(x,r)$ and a scalar $\lambda > 0$, we let $\lambda B = B(x,\lambda r)$. We say that $\mu$ is a \emph{doubling} measure, if there is a constant $c_{\dbl}\ge1$ such that $\meas{2B} \le c_{\dbl} \meas{B}$ for every ball $B$. Note that we will assume that $\mu$ satisfies the doubling condition only in Sections~\ref{sec:boundedness} and~\ref{sec:continuity}, where essential boundedness and continuity of Newtonian functions are studied.

A metric space is \emph{proper} if all its closed and bounded subsets are compact. A doubling metric measure space (and hence a metric space with a doubling measure) is proper if and only if it is complete, see Bj\"orn and Bj\"orn~\cite[Proposition 3.1]{BjoBjo}.

Let $\Mcal(\Pcal, \mu)$ denote the set of all extended real-valued $\mu$-measurable functions on~$\Pcal$. The symbol $\Lipc(\Omega)$ stands for Lipschitz continuous functions with compact support in $\Omega$. The set of extended real numbers, i.e., $\Rbb \cup \{\pm \infty\}$, will be denoted by $\overline{\Rbb}$. We will also use $\Rbb^+$, which denotes the set of positive real numbers, i.e., the interval $(0, \infty)$. The symbol $\Nbb$ will denote the set of positive integers, i.e., $\{1,2, \ldots\}$. We define the \emph{integral mean} of a measurable function $u$ over a set $E$ of finite positive measure as
\[
  u_E \coloneq \fint_E u\,d\mu = \frac{1}{\mu(E)} \int_E u\,d\mu,
\]
whenever the integral on the right-hand side exists, not necessarily finite though.
We write $E \Subset A$ if $\itoverline{E}$ is a compact subset of $A$. The notation $L \lesssim R$ will be used to express that there exists a constant $c>0$, perhaps dependent on other constants within the context, such that $ L \le cR$. If $L \lesssim R$ and simultaneously $R \lesssim L$, then we will simply write $L \approx R$ and say that the quantities $L$ and $R$ are \emph{comparable}. The words \emph{increasing} and \emph{decreasing} will be used in their non-strict sense.

A linear space $X = X(\Pcal, \mu)$ of equivalence classes of functions in $\Mcal(\Pcal, \mu)$ is a \emph{quasi-Banach function lattice} over $(\Pcal, \mu)$ equipped with the quasi-norm $\|\cdot\|_X$ if the following axioms hold:
\begin{enumerate}
  \renewcommand{\theenumi}{(P\arabic{enumi})}
  \setcounter{enumi}{-1}
  \item \label{df:qBFL.initial} $\|\cdot\|_X$ determines the set $X$, i.e., $X = \{u\in \Mcal(\Pcal, \mu)\colon \|u\|_X < \infty\}$;
  \item \label{df:qBFL.quasinorm} $\|\cdot\|_X$ is a \emph{quasi-norm}, i.e., 
  \begin{itemize}
    \item $\|u\|_X = 0$ if and only if $u=0$ a.e.,
    \item $\|au\|_X = |a|\,\|u\|_X$ for every $a\in\Rbb$ and $u\in\Mcal(\Pcal, \mu)$,
    \item there is a constant $\cconc \ge 1$, the so-called \emph{modulus of concavity}, such that $\|u+v\|_X \le \cconc(\|u\|_X+\|v\|_X)$ for all $u,v \in \Mcal(\Pcal, \mu)$;
  \end{itemize}
  \item $\|\cdot\|_X$ satisfies the \emph{lattice property}, i.e., if $|u|\le|v|$ a.e., then $\|u\|_X\le\|v\|_X$;
    \label{df:BFL.latticeprop}
  \renewcommand{\theenumi}{(RF)}
  \item \label{df:qBFL.RF} $\|\cdot\|_X$ satisfies the \emph{Riesz--Fischer property}, i.e., if $u_n\ge 0$ a.e.\@ for all $n\in\Nbb$, then $\bigl\|\sum_{n=1}^\infty u_n \bigr\|_X \le \sum_{n=1}^\infty \cconc^n \|u_n\|_X$, where $\cconc\ge 1$ is the modulus of concavity. Note that the function $\sum_{n=1}^\infty u_n$ needs to be understood as a pointwise (a.e.\@) sum.
\end{enumerate}
Observe that $X$ contains only functions that are finite a.e., which follows from \ref{df:qBFL.quasinorm} and \ref{df:BFL.latticeprop}. In other words, if $\|u\|_X<\infty$, then $|u|<\infty$ a.e.

Throughout the paper, we will also assume that the quasi-norm $\|\cdot\|_X$ is \emph{continuous}, i.e., if $\|u_n - u\|_X \to 0$ as $n\to\infty$, then $\|u_n\|_X \to \|u\|_X$. We do not lose any generality by this assumption as the Aoki--Rolewicz theorem (see Benyamini and Lindenstrauss~\cite[Proposition H.2]{BenLin} or Maligranda~\cite[Theorem~1.2]{Mali}) implies that there is always an equivalent quasi-norm that is an \emph{$r$-norm}, i.e., it satisfies
\[
  \|u + v\|^r \le \|u\|^r + \|v\|^r,
\]
where $r = 1/(1+ \log_2 \cconc) \in (0, 1]$, which implies the continuity. The theorem's proof shows that such an equivalent quasi-norm retains the lattice property. Moreover, $\|\cdot\|^r$ satisfies \ref{df:qBFL.RF} without any constants, i.e., $\bigl\|\sum_{n=1}^\infty u_n \bigr\|^r \le \sum_{n=1}^\infty \|u_n\|^r$. 

It is worth noting that the Riesz--Fischer property is actually equivalent to the completeness of the quasi-normed space $X$, given that the conditions \ref{df:qBFL.initial}--\ref{df:BFL.latticeprop} are satisfied and that the quasi-norm is continuous, see Maligranda~\cite[Theorem 1.1]{Mali}.

If $\cconc = 1$, then the functional $\| \cdot \|_X$ is a norm. We then drop the prefix \emph{quasi} and hence call $X$ a \emph{Banach function lattice}.

A (quasi)Banach function lattice $X = X(\Pcal, \mu)$ is a \emph{(quasi)Banach function space} over $(\Pcal, \mu)$ if the following axioms are satisfied as well:
\begin{enumerate}
  \renewcommand{\theenumi}{(P\arabic{enumi})}
  \setcounter{enumi}{2}
  \item $\|\cdot\|_X$ satisfies the \emph{Fatou property}, i.e., if $0\le u_n \upto u$ a.e., then $\|u_n\|_X\upto\|u\|_X$;
  \item \label{df:BFS.finmeasfinnorm} if a measurable set $E \subset \Pcal$ has finite measure, then $\|\chi_E\|_X < \infty$;
  {
  \item for every measurable set $E\subset \Pcal$ with $\meas{E}<\infty$ there is $C_E>0$ such that $\int_E |u|\,d\mu \le C_E \|u\|_X$ for every measurable function $u$.
    \label{df:BFL.locL1}
  } 
\end{enumerate}
Note that the Fatou property implies the Riesz--Fischer property. Axiom \ref{df:BFS.finmeasfinnorm} is equivalent to the condition that $X$ contains all simple functions (with support of finite measure). Due to the lattice property, \ref{df:BFS.finmeasfinnorm} can be also equivalently characterized as embedding of $L^\infty(\Pcal, \mu)$ into $X$ on sets of finite measure. Finally, condition~\ref{df:BFL.locL1} describes that $X$ is embedded into $L^1(\Pcal, \mu)$ on sets of finite measure.

In the further text, we will slightly deviate from this rather usual definition of (quasi)\allowbreak{}Banach function lattices and spaces. Namely, we will consider $X$ to be a linear space of functions defined everywhere instead of equivalence classes defined a.e. Then, the functional $\|\cdot\|_X$ is really only a (quasi)seminorm. Unless explicitly stated otherwise, we will always assume that $X$ is a quasi-Banach function lattice.

We will say that $X$ is \emph{continuously embedded} in $Y_\loc$, denoted by $X \emb Y_\loc$, if for every ball $B \subset \Pcal$ there is $\cemb(B) > 0$ such that $\|u \chi_B\|_Y \le \cemb(B) \|u \chi_B\|_X$ whenever $u\in \Mcal(\Pcal, \mu)$. The global continuous embedding $X\emb Y$ is defined in a similar fashion by letting $B=\Pcal$.

A function $u\in X$ has \emph{absolutely continuous quasi-norm in $X$}, if it satisfies that
\begin{enumerate}
  \renewcommand{\theenumi}{(AC)}
  \item \label{df:AC}
  $\| u \chi_{E_n} \|_X \to 0$ as $n\to\infty$ whenever $\{E_n\}_{n=1}^\infty$ is a decreasing sequence of measurable sets with $\meas{\bigcap_{n=1}^\infty E_n} = 0$.
\end{enumerate}
The quasi-norm $\| \cdot \|_X$ is \emph{absolutely continuous} if every $u \in X$ has absolutely continuous quasi-norm in $X$.

It follows from the dominated convergence theorem that the $L^p$ norm is absolutely continuous for $p\in (0, \infty)$. On the other hand, $L^\infty$ lacks this property apart from in a few exceptional cases. For example, if $\mu$ is atomic, $0<\delta\le \meas{A}$ for every atom $A\subset \Pcal$, and $\meas{\Pcal}<\infty$, then every quasi-Banach function lattice has absolutely continuous quasi-norm since the condition $\meas{\bigcap_{n=1}^\infty E_n} = 0$ implies that $E_n=\emptyset$ for all sufficiently large $n\in\Nbb$. However, atomic measures lie outside of the main scope of our interest.

In a quasi-Banach function lattice $X$ whose simple functions have absolutely continuous quasi-norm in $X$ (similarly as in the setting of Lebesgue spaces $L^p$ with $p<\infty$), we may approximate the quasi-norm of a function by the quasi-norms of its lower semicontinuous (lsc) majorants.
%
%
\begin{pro}[Vitali--Carath\'eodory theorem] 
\label{pro:VitaliCarath}
Let $X$ be a quasi-Banach function lattice. Suppose that $\chi_B \in X$ and it satisfies \ref{df:AC} whenever $B\subset \Pcal$ is bounded. If $u:\Pcal \to \overline{\Rbb}$ is measurable, then
\begin{equation}
  \label{eq:VitaliCaratheodory}
  \| u \|_X = \inf\{ \|v\|_X: v\ge |u| \mbox{ on $\Pcal$ and }v\in \lsc(\Pcal)\}.
\end{equation}
\end{pro}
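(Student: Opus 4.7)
The inequality $\|u\|_X \leq \inf\{\|v\|_X\}$ is immediate from the lattice property~\ref{df:BFL.latticeprop}. It therefore suffices to produce, for every $\eps > 0$, a function $v \geq |u|$ in $\lsc(\Pcal)$ with $\|v - |u|\|_X \leq \eps$; the continuity of the quasi-norm then forces $\|v\|_X \to \|u\|_X$ as $\eps \downto 0$, yielding the reverse inequality. Replace $u$ by $|u|$ and assume $u \geq 0$ with $\|u\|_X < \infty$ (otherwise both sides of~\eqref{eq:VitaliCaratheodory} are infinite).

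The strategy is to chop $u$ into countably many uniformly bounded pieces of bounded support and approximate each piece from above separately. Fix $x_0 \in \Pcal$ and set $A_k := B(x_0, k) \setminus B(x_0, k-1)$ for $k \in \Nbb$; these are bounded Borel sets partitioning $\Pcal$. The truncations
\[
w_{k,j} := \bigl((u \wedge j) - (u \wedge (j-1))\bigr)\chi_{A_k}, \qquad k, j \in \Nbb,
\]
take values in $[0,1]$, are supported in $A_k$, and satisfy $u = \sum_{k,j} w_{k,j}$ pointwise. Enumerate the pairs $(k,j)$ as $i = 1, 2, \ldots$ and choose $\eps_i > 0$ with $\sum_i \cconc^i \eps_i \leq \eps$.

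For each $w_i$ I would construct an lsc $v_i \geq w_i$ with $\|v_i - w_i\|_X \leq \eps_i$ in two stages. First, for large $M \in \Nbb$ set
\[
w_i \leq s_i := \sum_{m=1}^M \tfrac{m}{M}\,\chi_{F_m}, \qquad F_m := \bigl\{\tfrac{m-1}{M} \leq w_i < \tfrac{m}{M}\bigr\} \cap A_k,
\]
so that $0 \leq s_i - w_i \leq \tfrac{1}{M}\chi_{A_k}$, whence $\|s_i - w_i\|_X \leq \eps_i/(2\cconc)$ once $M$ is large, using that $\chi_{A_k} \in X$. Second, for each $F_m \subset A_k$, outer regularity of $\mu$ supplies a decreasing sequence of open sets $O_n \supset F_m$ lying in a common bounded ball $B$ with $\mu\bigl(\bigcap_n O_n \setminus F_m\bigr) = 0$; the assumed property~\ref{df:AC} of $\chi_B$ then forces $\|\chi_{O_n \setminus F_m}\|_X \to 0$. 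Selecting $U_m := O_{n(m)}$ for $n(m)$ large enough and setting $v_i := \sum_{m=1}^M \tfrac{m}{M} \chi_{U_m}$ produces an lsc function (finite sum of lsc open indicators) dominating $s_i$ with $\|v_i - s_i\|_X \leq \eps_i/(2\cconc)$, so the quasi-triangle inequality gives $\|v_i - w_i\|_X \leq \eps_i$.

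Finally, $v := \sum_i v_i$ is lsc as a countable sum of nonnegative lsc functions, $v \geq u$ pointwise, and by the Riesz--Fischer property~\ref{df:qBFL.RF},
\[
\|v - u\|_X = \Bigl\|\sum_i (v_i - w_i)\Bigr\|_X \leq \sum_i \cconc^i \|v_i - w_i\|_X \leq \eps,
\]
which closes the argument. The crux of the proof is the second sub-step: outer regularity of $\mu$ controls only the \emph{measure} of $O_n \setminus F_m$, whereas the construction requires smallness in the $X$-quasi-norm, and the bridge between them is precisely the hypothesis that $\chi_B$ has~\ref{df:AC} for bounded $B$. It is at this single point that both standing assumptions on $\chi_B$ (membership in $X$ and absolute continuity of the quasi-norm) are genuinely invoked.
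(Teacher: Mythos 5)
Your approach matches the paper's own proof in essence: both decompose $\Pcal$ into countably many bounded annular shells around a fixed point, quantize $u$ within each shell into level sets, appeal to outer regularity of $\mu$ together with~\ref{df:AC} for characteristic functions of bounded sets to cover each level set by an open set of small $X$-quasi-norm, and reassemble with the Riesz--Fischer property. The paper uses overlapping \emph{open} shells $\Pcal_k=\{k-2<\dd(x,x_0)<k\}$ and a $\max$ construction (with the disjointification $\Pcal_k'$) to control the total error, whereas you use pairwise disjoint Borel shells $A_k$ and a plain sum, which is an equally valid bookkeeping device; you also pre-truncate $u$ into $[0,1]$-valued pieces $w_{k,j}$, while the paper handles the possible infinities of $u$ separately via the null set $E_\infty$ and open sets $V_n$. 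Both routes use exactly the same two hypotheses in exactly the same place, as you correctly emphasize.

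One small slip to correct: your level sets $F_m=\{\tfrac{m-1}{M}\le w_i<\tfrac{m}{M}\}\cap A_k$, $m=1,\ldots,M$, miss the set $\{w_i=1\}\cap A_k$, which can have positive measure (it equals $\{u\ge j\}\cap A_k$); there $s_i=0<1=w_i$, so the claimed inequalities $w_i\le s_i$ and $0\le s_i-w_i$ fail, and with them the final conclusion $v\ge u$. The fix is cosmetic --- close the top interval, e.g.\ take $F_M:=\{\tfrac{M-1}{M}\le w_i\le 1\}\cap A_k$ --- but as written that step does not hold.
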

%
%
In particular, the hypotheses are fulfilled if $X$ is a rearrangement-invariant quasi-Banach function space (see the definition in Section~\ref{sec:boundedness} below) whose fundamental function $\phi_X$ satisfies $\lim_{t\to0\limplus} \phi_X(t) = 0$, which can be also expressed as $X \nsubset L^\infty$.

The theorem's origin can be dated back to 1905, when Vitali~\cite{Vit} showed that every function $f \in L^1(\Rbb^n, d\mu)$ coincides a.e.\@ with a function of the Baire class 2. Namely, he showed that there exist sequences $\{u_k\}_{k=1}^\infty$ and $\{l_k\}_{k=1}^\infty$ of upper semicontinuous (usc) minorants and lsc majorants, respectively, such that $u_k\upto f$ and $l_k \downto f$ everywhere in $\Rbb^n$, and $\|u_k\|_{L^1} \to \|f\|_{L^1}$ and $\|l_k\|_{L^1} \to \|f\|_{L^1}$ as $k\to\infty$. In 1918, Carath\'eodory~\cite{Car} has shown that usc minorants and lsc majorants with the same convergence properties exist for every $f\in \Mcal(\Rbb^n, d\mu)$, i.e., even if $f \notin L^1(\Rbb^n, d\mu)$.

The lsc majorants in the proof below are constructed similarly as in E.~\&~M.~J\"arvenp\"a\"a, K.~\&~S.~Rogovin, and Shanmugalingam~\cite[Lemma~2.3]{Jar2Rog2Sha}, where merely $X=L^p$ with $p \in [1, \infty)$ was considered and $u$ was assumed Borel measurable.

Note also that both the absolute continuity and \ref{df:BFS.finmeasfinnorm} are vital for the proposition. For example, take $u = \chi_{\{0\}}$ on $\Rbb$. Then, $\|u\|_X=0$ for every quasi-Banach function lattice $X$ as $u=0$ a.e. If $X = L^\infty(\Rbb)$, which lacks the absolute continuity, then $\|v\|_X \ge 1$ for every lsc majorant $v$ of $u$. The norm $\|f\|_Y = \int_\Rbb |f(t)/t|\,dt$ gives rise to a function space that fails to contain $\chi_B$ for all bounded sets $B\subset\Rbb$. Then, $\|v\|_Y = \infty$ for every lsc majorant $v$ of $u$ since $v>1/2$ in some open neighborhood of zero.
%
%
\begin{proof}[Proof of Proposition~\ref{pro:VitaliCarath}]
Let $u$ be given. If $\|u\|_X = \infty$, then the desired identity holds trivially. Suppose instead that $\|u\|_X<\infty$. Without loss of generality, we may assume that $u$ is non-negative. For an arbitrary $\eps > 0$, we want to find $v\in \lsc(\Pcal)$ such that $v\ge u$ and $\|v-u\|_X < \eps$. Fixing an arbitrary point $x_0\in\Pcal$, we can decompose $\Pcal$ as a union of an open ball and open spherical shells centered in $x_0$, i.e., $\Pcal = \bigcup_{k=1}^\infty \Pcal_k$, where $\Pcal_k = \{ x\in \Pcal: k-2< \dd(x,x_0) < k\}$.

Let $\eps>0$ be fixed. For each $k\in\Nbb$, we will find an lsc function $v_k$ that dominates $u$ on $\Pcal_k$ while $\|(v_k - u)\chi_{\Pcal_k}\|_X < \eps / (2\cconc)^k$, where $\cconc\ge 1$ is the modulus of concavity of $X$ (i.e., the constant in the triangle inequality in \ref{df:qBFL.quasinorm}).

Fix $k\in\Nbb$ and let $\delta = \eps / \cconc^2 (2\cconc)^k (2+\|\chi_{\Pcal_k}\|_X)$. Let $E_\infty=\Pcal_k \cap u^{-1}(\infty)$. Then, $\meas{E_\infty} = 0$. For every $n\in\Nbb$, we define $E_n = \Pcal_k \cap u^{-1}([(n-1)\delta, n\delta))$. Due to the outer regularity of $\mu$ and the absolute continuity of the (quasi)norm of $\chi_{\Pcal_k}$ in $X$, there are open sets $U_n$ and $V_n$ such that $E_n \subset U_n \subset \Pcal_k$ with $\|\chi_{U_n \setminus E_n}\|_X < 1/n(2\cconc)^n$, and $E_\infty \subset V_n \subset \Pcal_k$ with $\|\chi_{V_n}\|_X < \delta/(2\cconc)^n$. Define now $v_k: \Pcal \to [0, \infty]$ by
\[
  v_k(x) = \sum_{n=1}^\infty (n\delta \chi_{U_n} + \chi_{V_n}).
\]
Obviously, $v_k\in \lsc(\Pcal_k)$ and $v_k\ge u$ on $\Pcal_k$. Then, we can estimate
\begin{align*}
  v_k(x) - u(x) & \le \sum_{n=1}^\infty \bigl(n\delta \chi_{U_n}(x) + \chi_{V_n}(x) - (n-1)\delta\chi_{E_n}(x)\bigr) \\
   & \le \sum_{n=1}^\infty \bigl(n\delta \chi_{U_n \setminus E_n}(x) + \delta \chi_{E_n}(x) +  \chi_{V_n}(x)\bigr) \\
   & = \sum_{n=1}^\infty n\delta \chi_{U_n \setminus E_n}(x) + \delta \chi_{\Pcal_k}(x) +  \sum_{n=1}^\infty \chi_{V_n}(x).
\end{align*}
The triangle inequality and the Riesz--Fischer property give that
\begin{align}
  \notag
  \|(v_k - u)\chi_{\Pcal_k}\|_X &\le \cconc^2 \biggl( \delta \sum_{n=1}^\infty n \cconc^n \|\chi_{U_n \setminus E_n}\|_X + \delta \|\chi_{\Pcal_k} \|_X + \sum_{n=1}^\infty \cconc^n \|\chi_{V_n}\|_X \biggr)
\\
& < \delta \cconc^2 (2+\|\chi_{\Pcal_k}\|_X) = \frac{\eps}{(2\cconc)^k}\,.
  \label{eq:VitCar-vk}
\end{align}

Let now $v(x) = \max_{k\in\Nbb} v_k(x)$ for all $x\in\Pcal$. Note that for each $x\in\Pcal$, there may be at most two values of $k$ such that $v_k(x)>0$ and in that case they are consecutive. Define thus $\Pcal_k' = \{ x \in \Pcal_k: v(x) = v_k(x) > v_{k+1}(x)\}$. Then, the sets $\Pcal_k'$, $k\in\Nbb$, are pairwise disjoint and $v(x) = u(x) = 0$ whenever $x\in \Pcal \setminus \bigcup_{k=1}^\infty \Pcal_k'$, which together with \eqref{eq:VitCar-vk} leads to the estimate
\[
  \| v - u \|_X = \biggl\| \sum_{k=1}^\infty (v-u)\chi_{\Pcal_k'} \biggr\|_X = \biggl\| \sum_{k=1}^\infty (v_k-u)\chi_{\Pcal_k'} \biggr\|_X \le \sum_{k=1}^\infty \cconc^k \|(v_k-u)\chi_{\Pcal_k}\|_X<\eps.
\qedhere
\]
\end{proof}
%
%
By a \emph{curve} in $\Pcal$ we will mean a non-constant continuous mapping $\gamma: I\to \Pcal$ with finite total variation (i.e., length of $\gamma(I)$), where $I \subset \Rbb$ is a compact interval. Thus, a curve can be (and we will always assume that all curves are) parametrized by arc length $ds$, see e.g.\@ Heinonen~\cite[Section~7.1]{Hei}. Note that every curve is Lipschitz continuous with respect to its arc length parametrization. The family of all non-constant rectifiable curves in $\Pcal$ will be denoted by $\Gamma(\Pcal)$. By abuse of notation, the image of a curve $\gamma$ will also be denoted by $\gamma$. 

A statement holds for \emph{$\Mod_X$-a.e.\@} curve if the family of exceptional curves $\Gamma_e$, for which the statement fails, has \emph{zero $X$-modulus}, i.e., if there is a Borel function $\rho \in X$ such that $\int_\gamma \rho\,ds = \infty$ for every curve $\gamma \in \Gamma_e$ (see~\cite[Proposition 4.8]{Mal1}).

\begin{df}
\label{df:ug}
  Let $u: \Pcal \to \overline{\Rbb}$. Then, a Borel function $g: \Pcal \to [0, \infty]$ is an \emph{upper gradient} of $u$ if
\begin{equation}
 \label{eq:ug_def}
 |u(\gamma(0)) - u(\gamma(l_\gamma))| \le \int_\gamma g\,ds
\end{equation}
for every curve $\gamma: [0, l_\gamma]\to\Pcal$. To make the notation easier, we are using the convention that $|(\pm\infty)-(\pm\infty)|=\infty$. If we allow $g$ to be a measurable function and~\eqref{eq:ug_def} to hold only for $\Mod_X$-a.e.\@ curve $\gamma: [0, l_\gamma]\to\Pcal$, then $g$ is an \emph{$X$-weak upper gradient}.
\end{df}

Observe that the ($X$-weak) upper gradients are by no means given uniquely. Indeed, if we have a function $u$ with an ($X$-weak) upper gradient $g$, then $g+h$ is another ($X$-weak) upper gradient of $u$ whenever $h\ge0$ is a Borel (measurable) function.
%
%
\begin{df}
We say that function $u\in \Mcal(\Pcal, \mu)$ belongs to the \emph{Dirichlet space} $DX$ if it has an upper gradient $g \in X$. Let
\begin{equation}
  \label{eq:def-N1X-norm}
  \|u\|_{\NX} = \| u \|_X + \inf_g \|g\|_X,
\end{equation}
where the infimum is taken over all upper gradients $g$ of $u$.
The \emph{Newtonian space} based on $X$ is the space
\[
  \NX = \NX (\Pcal, \mu) \coloneq \{u\in\Mcal(\Pcal, \mu): \|u\|_{\NX} <\infty \} = X \cap DX.
\]
Given a measurable set $\Theta \subset \Pcal$, we define
\[
  \NnX(\Theta) = \{u|_\Theta: u\in\NX\textup{ and } u=0\textup{ in }\Pcal \setminus \Theta\}.
\]
\end{df}
Note that we may define $DX$ via $X$-weak upper gradients and take the infimum over all $X$-weak upper gradients $g$ of $u$ in \eqref{eq:def-N1X-norm} without changing the value of the Newtonian quasi-norm, see~\cite[Corollary~5.7]{Mal1}. Let us also point out that we assume that functions are defined everywhere, and not just up to equivalence classes \mbox{$\mu$-}almost everywhere. This is essential for the notion of upper gradients since they are defined by a pointwise inequality.

It has been shown in~\cite{Mal2} that the infimum in \eqref{eq:def-N1X-norm} is attained for functions in $\NX$ by a \emph{minimal $X$-weak upper gradient}. Such an $X$-weak upper gradient is minimal both normwise and pointwise (a.e.\@) among all ($X$-weak) upper gradients in $X$, whence it is given uniquely up to equality a.e. The minimal $X$-weak upper gradient of a function $u\in\NX$ will be denoted by $g_u \in X$.

The functional $\|\cdot\|_\NX$ is a quasi-seminorm on $\NX$ and a quasi-norm on $\NtX \coloneq \NX/\mathord\sim$, where the equivalence relation $u\sim v$ is given by $\|u-v\|_\NX = 0$. The modulus of concavity for $\NX$ (i.e., the constant in the triangle inequality) is equal to $\cconc$, the modulus of concavity for $X$. Furthermore, the Newtonian space $\NtX$ is complete and thus a quasi-Banach space, see~\cite[Theorem~7.1]{Mal1}.
%
%
%
%
\section{Sobolev capacity}
\label{sec:capacity}
When working with first-order analysis, it is the Sobolev capacity that provides a set function that distinguishes which sets do not carry any information about a Newtonian function and thus are negligible. In this section, we will show a certain rigidity property of the capacity. Furthermore, if $\Pcal$ is locally compact and if the Vitali--Carath\'eodory theorem holds, then we will obtain that the capacity is outer regular on sets of zero capacity.
\begin{df}
\label{df:capacity}
The \emph{(Sobolev) $X$-capacity} of a set $E\subset \Pcal$ is defined as
\[
  C_X(E) = \inf\{ \|u\|_{\NX}: u\ge 1 \mbox{ on }E\}.
\]
If $X$ is $r$-normed for some $r \in (0, \infty)$, then we define the \emph{(Sobolev) $X,\!r$-capacity} of a set $E\subset \Pcal$ by
\[
  \widetilde{C}_{X,r}(E) = \inf\{ (\|u\|^r_{X} + \|g\|^r_X)^{1/r}: u\ge 1 \mbox{ on $E$ and $g$ is an upper gradient of $u$}\}.
\]
The function lattice $X$ will be implicitly assumed to be $r$-normed whenever the capacity $\widetilde{C}_{X,r}$ is used.
We say that a property of points in $\Pcal$ holds \emph{$C_X$-quasi-everywhere ($C_X$-q.e.)} if the set of exceptional points has $X$-capacity zero. 
\end{df}
Observe that the capacities $C_X$ and $\widetilde{C}_{X,r}$ are equivalent, viz.,
\[
  \min\{1, 2^{1/r-1}\} C_X(E) \le \widetilde{C}_{X,r}(E) \le \max\{1, 2^{1/r-1}\} C_X(E) \quad\mbox{for every $E\subset \Pcal$.}
\]
Therefore, it is of no importance whether the notion of quasi-everywhere is defined using $C_X$ or $\widetilde{C}_{X,r}$. The capacities $C_X(E)$ and $\widetilde{C}_{X,r}(E)$ may be equivalently defined considering only functions $u$ such that $\chi_E\le u \le 1$, see~\cite[Proposition~3.2]{Mal1}. If $X$ is normed, then $C_X = \widetilde{C}_{X,1}$. Despite the dependence on $X$, we will often write simply \emph{capacity} and \emph{q.e.\@} whenever there is no risk of confusion of the base function space.

A capacity $C$ is an \emph{outer capacity}, if $C(E) = \inf_{G} C(G)$, where the infimum is taken over all open sets $G \supset E$. Based on the quasi-continuity of Newtonian functions, we will show in Proposition~\ref{pro:C_X-outer} below that $\widetilde{C}_{X,r}$ is an outer capacity.

It was established in~\cite[Theorem 3.4]{Mal1} that $C_X$ is an outer measure on $\Pcal$ if $\cconc=1$. Otherwise, the set function $C_X$ is merely $\sigma$-quasi-additive, i.e.,
\[
  C_X \biggl( \bigcup_{j=1}^\infty E_j \biggr) \le \sum_{j=1}^\infty \cconc^j C_X(E_j).
\]
On the contrary, $\widetilde{C}_{X,r}(\cdot)^r$ is always an outer measure on $\Pcal$, as is shown next.
\begin{lem}
The function $\widetilde{C}_{X,r}(\cdot)^r$ is $\sigma$-subadditive, i.e., 
\[
 \widetilde{C}_{X,r} \biggl( \bigcup_{j=1}^\infty E_j \biggr)^r \le \sum_{j=1}^\infty \widetilde{C}_{X,r}(E_j)^r
\]
whenever $E_1, E_2, \ldots \subset \Pcal$.
\end{lem}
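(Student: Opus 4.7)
The plan is to mimic the standard countable-subadditivity proof for Sobolev capacities, with the $r$-norm Riesz--Fischer inequality (which $\|\cdot\|_X^r$ satisfies without constants after the Aoki--Rolewicz reduction in Section~\ref{sec:prelim}) playing the role of the ordinary triangle inequality. If the right-hand side is infinite there is nothing to prove, so suppose all $\widetilde{C}_{X,r}(E_j)$ are finite. Fix $\eps>0$, and for each $j \in \Nbb$ invoke the characterization mentioned just after Definition~\ref{df:capacity} to pick $u_j$ with $\chi_{E_j}\le u_j\le 1$ and an upper gradient $g_j$ of $u_j$ satisfying
\[
  \|u_j\|_X^r+\|g_j\|_X^r \le \widetilde{C}_{X,r}(E_j)^r+\eps/2^j.
\]

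Set $u \coloneq \sup_{j\in\Nbb} u_j$ and $g \coloneq \sum_{j=1}^\infty g_j$. Then $\chi_{\bigcup_j E_j}\le u\le 1$, the function $u$ is measurable as a countable supremum of measurable functions, and $g$ is Borel measurable with values in $[0,\infty]$. The principal step is to verify that $g$ is an upper gradient of $u$. Fix a curve $\gamma\colon[0,l_\gamma]\to \Pcal$; without loss of generality $u(\gamma(0))\ge u(\gamma(l_\gamma))$, and both values are finite since $u\le 1$. Given arbitrary $\delta>0$, choose $j$ with $u_j(\gamma(0))>u(\gamma(0))-\delta$. Using $u_j(\gamma(l_\gamma))\le u(\gamma(l_\gamma))$ together with the upper gradient inequality for $u_j$,
\[
  u(\gamma(0))-u(\gamma(l_\gamma)) < u_j(\gamma(0))-u_j(\gamma(l_\gamma))+\delta \le \int_\gamma g_j\,ds+\delta \le \int_\gamma g\,ds+\delta,
\]
and sending $\delta\to 0$ delivers the upper gradient inequality for $u$.

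Having established that $(u,g)$ is admissible in the definition of $\widetilde{C}_{X,r}\bigl(\bigcup_j E_j\bigr)$, the conclusion follows from the $r$-norm Riesz--Fischer estimate applied to $0\le u\le \sum_j u_j$ and to $g=\sum_j g_j$:
\[
  \widetilde{C}_{X,r}\biggl(\bigcup_{j=1}^\infty E_j\biggr)^r \le \|u\|_X^r+\|g\|_X^r \le \sum_{j=1}^\infty \bigl(\|u_j\|_X^r+\|g_j\|_X^r\bigr) \le \sum_{j=1}^\infty \widetilde{C}_{X,r}(E_j)^r + \eps.
\]
Letting $\eps\to 0$ yields the claim.

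The only genuinely non-trivial step is the verification that $\sum_j g_j$ is an upper gradient of $\sup_j u_j$. Once that one-line pointwise argument is in place, the rest is a direct substitution into the $r$-norm Riesz--Fischer inequality, which is precisely the extra structure on $X$ that upgrades the merely $\sigma$-quasi-additive $C_X$ to a truly $\sigma$-subadditive $\widetilde{C}_{X,r}(\cdot)^r$.
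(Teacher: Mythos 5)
Your proof is correct and follows essentially the same route as the paper: choose near-optimal admissible pairs $(u_j,g_j)$, take $u=\sup_j u_j$, combine the gradients, and finish with the $r$-norm Riesz--Fischer estimate. The only cosmetic difference is that you take $g=\sum_j g_j$ and verify the upper-gradient inequality directly, whereas the paper takes $g=\sup_j g_j$, cites \cite[Lemma~3.3]{Mal1} for the upper-gradient property, and then anyway bounds $\|\sup_j g_j\|_X$ by $\|\sum_j g_j\|_X$ inside the norm estimate, so the two arguments coincide after one line.
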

\begin{proof}
If $\widetilde{C}_{X,r}(E_j) = \infty$ for some $j\in\Nbb$, then the wanted inequality holds trivially. Suppose therefore that $\widetilde{C}_{X,r}(E_j) < \infty$ for every $j\in\Nbb$. For each $E_j$, $j\in\Nbb$, we can hence find $u_j\in \NX$ with an upper gradient $g_j\in X$ such that $\chi_{E_j} \le u_j \le 1$, and $\|u_j\|_X^r + \|g_j\|_X^r < \widetilde{C}_{X,r}(E_j)^r + 2^{-j} \eps$. Let $u = \sup_{j\ge1} u_j$ and $g = \sup_{j\ge1} g_j$. Then, $\chi_{\bigcup_{j=1}^\infty E_j} \le u \le 1$, while $g$ is an upper gradient of $u$ by~\cite[Lemma 3.3]{Mal1}. Hence,
\begin{align*}
  \widetilde{C}_{X,r}\biggl( \bigcup_{j=1}^\infty E_j \biggr)^r & \le \|u\|_X^r +\|g\|_X^r = \Bigl\| \sup_{j\ge 1} u_j\Bigr\|_X^r + \Bigl\| \sup_{j\ge 1} g_j\Bigr\|_X^r \le \biggl\| \sum_{j=1}^\infty u_j\biggr\|_X^r + \biggl\| \sum_{j=1}^\infty g_j\biggr\|_X^r \\
  & \le \sum_{j=1}^\infty (\|u_j\|_X^r + \|g_j\|_X^r) < \sum_{j=1}^\infty \Bigl(  \widetilde{C}_{X,r}(E_j)^r + \frac{ \eps}{2^j}\Bigr) = \eps + \sum_{j=1}^\infty  \widetilde{C}_{X,r}(E_j)^r\,.
\end{align*}
Letting $\eps\to 0$ completes the proof.
\end{proof}

If $C_X(E) = 0$, then $\mu(E) = 0$. The converse is however not true in general. The natural equivalence classes in $\NX$, where $u$ and $v$ are equivalent if $\|u-v\|_\NX = 0$, are in fact given by equality q.e.\@ as shown in~\cite[Corollary 6.16]{Mal1}.

The following proposition shows a certain rigidity property of the Sobolev capacity of an open set, which is an important hypothesis in Proposition~\ref{pro:qc_ae=qe} below. The usual Sobolev capacity in $\Rbb^n$ has this property trivially by definition, see Heinonen, Kilpel\"ainen and Martio~\cite[Definition~2.35]{HeiKilMar}. The idea of the claim and its proof originates in~\cite[Proposition~5.22]{BjoBjo}. It is noteworthy that we do not need that $C_X$ (or $\widetilde{C}_{X,r}$) is an outer capacity to obtain this result.
\begin{pro}
\label{pro:cap_rigid}
Let $G \subset \Pcal$ be open and suppose that $\meas{E} = 0$. Then,
\[
 C_X(G) = C_X(G\setminus E)\quad\mbox{and}\quad\widetilde{C}_{X,r}(G) = \widetilde{C}_{X,r}(G\setminus E).
\]
\end{pro}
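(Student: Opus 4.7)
The easy direction $C_X(G \setminus E) \le C_X(G)$ (and its $\widetilde{C}_{X,r}$-analogue) is immediate from monotonicity, since $G \setminus E \subset G$. For the reverse, the plan is to upgrade every competitor for $G \setminus E$ into a competitor for $G$ by modifying its values only on the $\mu$-null set $G \cap E$, without paying in quasi-norm. Given $\eps > 0$, choose $u$ with $u \ge 1$ on $G \setminus E$ together with an upper gradient $g \in X$ of $u$ such that $\|u\|_X + \|g\|_X < C_X(G \setminus E) + \eps$. The standard truncation $u \mapsto \min(\max(u,0),1)$ does not enlarge any upper gradient, so we may assume $u \equiv 1$ on $G \setminus E$ and $0 \le u \le 1$. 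Define
\[
  \tilde u \coloneq \chi_G + u\chi_{\Pcal \setminus G},
\]
so that $\tilde u \equiv 1$ on $G$, while $\{\tilde u \ne u\} \subset G \cap E$. In particular $\|\tilde u\|_X = \|u\|_X$.

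The crux is to check that $g$ remains an $X$-weak upper gradient of $\tilde u$. By Borel regularity of $\mu$ we may enlarge $E$ to a Borel null set, so that $\rho \coloneq \infty \cdot \chi_E$ is a Borel function lying in $X$ with $\|\rho\|_X = 0$; this $\rho$ witnesses that the family of curves $\gamma$ for which $\gamma^{-1}(E)$ has positive one-dimensional Lebesgue measure has zero $X$-modulus. Simultaneously, $g$ itself witnesses that $\int_\gamma g < \infty$ for $\Mod_X$-a.e.\@ curve. Fix a curve $\gamma \colon [0, l_\gamma] \to \Pcal$ in the intersection of these two full-modulus families. Applying the upper-gradient inequality to subcurves of $\gamma$ makes $u \circ \gamma$ (uniformly) continuous on $[0, l_\gamma]$. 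Now $\gamma^{-1}(G)$ is open in $[0, l_\gamma]$, and $\gamma^{-1}(G \setminus E)$ differs from it only by the Lebesgue-null set $\gamma^{-1}(G \cap E)$, so $\gamma^{-1}(G \setminus E)$ is dense in $\gamma^{-1}(G)$. Since $u \circ \gamma \equiv 1$ on this dense subset, continuity forces $u \circ \gamma \equiv 1$ on all of $\gamma^{-1}(G)$, whence $u \circ \gamma = \tilde u \circ \gamma$ pointwise on $[0, l_\gamma]$. The upper-gradient inequality for $u$ therefore transfers verbatim to $\tilde u$ along $\gamma$.

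Using that the Newtonian quasi-norm may equivalently be computed with $X$-weak upper gradients, we conclude $\tilde u \in \NX$ with $\|\tilde u\|_{\NX} \le \|\tilde u\|_X + \|g\|_X = \|u\|_X + \|g\|_X < C_X(G \setminus E) + \eps$, and since $\tilde u \ge 1$ on $G$ we get $C_X(G) \le \|\tilde u\|_{\NX} < C_X(G \setminus E) + \eps$. Sending $\eps \to 0^+$ yields the first identity; replacing the sum $\|u\|_X + \|g\|_X$ by $(\|u\|_X^r + \|g\|_X^r)^{1/r}$ throughout handles $\widetilde{C}_{X,r}$ identically. The main obstacle is exactly the weak-upper-gradient claim: an arbitrary pointwise change of $u$ on the $\mu$-null set $G \cap E$ would generally ruin the upper-gradient property along curves crossing $E$, and what rescues the argument is the combined use of $\mu(E) = 0$ (giving Lebesgue-null preimages of $E$ for $\Mod_X$-a.e.\@ curve), the \emph{openness} of $G$ (making $\gamma^{-1}(G)$ an open, hence locally non-trivial, subset of the parameter interval), and the continuity of $u$ along curves of finite $g$-integral.
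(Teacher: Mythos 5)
Your proof is correct and takes essentially the same route as the paper's: replace the near-optimal competitor $u$ for $G\setminus E$ by $\tilde u = \max\{\chi_G,u\}$ and show $g$ remains an $X$-weak upper gradient of $\tilde u$ by exploiting that $\Mod_X$-a.e.\ curve has Lebesgue-null preimage of the $\mu$-null set and finite $g$-integral. The only (cosmetic) difference is that you deduce $u\circ\gamma\equiv\tilde u\circ\gamma$ on all of $[0,l_\gamma]$ from continuity of $u\circ\gamma$ and density of $\gamma^{-1}(G\setminus E)$ in the open set $\gamma^{-1}(G)$, whereas the paper only adjusts the endpoint values of $\gamma$; both yield the required curve inequality.
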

\begin{proof}
Obviously, $C_X(G) \ge C_X(G\setminus E)$ as the capacity is monotone.

The converse inequality holds trivially if $C_X(G\setminus E) = \infty$. Hence, suppose that $C_X(G\setminus E) < \infty$. Let $\eps>0$. Then, there is $u \in \NX$ with an upper gradient $g\in X$ such that $\chi_{G\setminus E} \le u \le 1$ and $\|u\|_\NX \le \|u\|_X + \|g\|_X < C_X(G\setminus E) + \eps$.

Let $v = \max\{\chi_G, u\}$. Then, $u=v$ outside of $G \cap E$, whose measure is zero. Hence, $\|u\|_X = \|v\|_X$. We will show that $g$ is an $X$-weak upper gradient of $v$. Let $\gamma: [0, l_\gamma] \to \Pcal$ be a curve such that $\lambda^1(\gamma^{-1}(G \cap E)) = 0$ and \eqref{df:ug} is satisfied for all subcurves $\gamma' = \gamma|_I$, where $I \subset [0, l_\gamma]$ is a closed interval. By~\cite[Lemma~4.9 and Corollary~5.9]{Mal1}, $\Mod_X$-a.e.\@ curve $\gamma$ satisfies these conditions.

If $\gamma(0) \in G\cap E$, then there is $\alpha \in (0, l_\gamma)$ such that $\gamma(\alpha) \in G \setminus E$ as $\gamma^{-1}(G)$ is open in $[0, l_\gamma]$ and $\lambda^1(\gamma^{-1}(G \cap E)) = 0$. If $\gamma(0) \notin G\cap E$, then we set $\alpha = 0$. We obtain that $v(\gamma(0)) = u(\gamma(\alpha))$. 

Similarly, if $\gamma(l_\gamma) \in G\cap E$, then there is $\beta \in (\alpha, l_\gamma)$ such that $\gamma(\beta) \in G \setminus E$. We set $\beta = l_\gamma$ otherwise. Consequently, $v(\gamma(l_\gamma)) = u(\gamma(\beta))$. Therefore,
\begin{equation}
  \label{eq:cap_rigid_wug}
  |v(\gamma(0)) - v(\gamma(l_\gamma))| = |u(\gamma(\alpha)) - u(\gamma(\beta))| \le \int_{\gamma|_{[\alpha, \beta]}} g\,ds \le \int_\gamma g\,ds.
\end{equation}
Thus, $g$ is an $X$-weak upper gradient of $v$ as \eqref{eq:cap_rigid_wug} holds for $\Mod_X$-a.e.\@ curve $\gamma$. Hence
\[
  C_X(G) \le \|v\|_\NX \le \|v\|_X + \|g\|_X = \|u\|_X + \|g\|_X < C_X(G\setminus E) + \eps.
\]
Letting $\eps \to 0$, we see that $C_X(G) \le C_X(G\setminus E)$ as needed.

The equality for $\widetilde{C}_{X,r}$ can be shown analogously.
\end{proof}
%
%
For an arbitrary set, adding a zero set (with respect to the capacity) does not change the capacity of the set even if $C_X$ is not subadditive but merely quasi-additive as we are now about to see.
\begin{lem}
\label{lem:C_X-addnull}
Let $E,F \subset \Pcal$. Suppose that $C_X(F) = 0$. Then, $C_X(E\cup F) = C_X(E)$ and $\widetilde{C}_{X,r}(E\cup F) = \widetilde{C}_{X,r}(E)$.
\end{lem}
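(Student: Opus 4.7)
The plan is to prove both identities by a single construction that raises a near-optimal competitor for the capacity of $E$ to be $\ge 1$ on $F$, using that $C_X(F) = 0$ allows this at zero cost. By monotonicity of the two capacities, only the $\le$ directions need proof; they are trivial when $C_X(E) = \infty$, so I would assume $C_X(E), \widetilde{C}_{X,r}(E) < \infty$.

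Given $\eps > 0$, I would pick $u \in \NX$ with an upper gradient $g \in X$ such that $\chi_E \le u \le 1$ and $\|u\|_X + \|g\|_X < C_X(E) + \eps$ (and correspondingly $(\|u\|_X^r + \|g\|_X^r)^{1/r} < \widetilde{C}_{X,r}(E) + \eps$). Setting
\[
  v = u \chi_{\Pcal \setminus F} + \chi_F,
\]
yields a measurable function with $\chi_{E \cup F} \le v \le 1$ that agrees with $u$ off $F$. Since $C_X(F) = 0$ implies $\mu(F) = 0$, automatically $\|v\|_X = \|u\|_X$. The whole game is then to show that $g$ remains an $X$-weak upper gradient of $v$, after which $v$ is a valid competitor for both $C_X(E \cup F)$ and $\widetilde{C}_{X,r}(E\cup F)$ with the same cost as $u$; letting $\eps \to 0$ concludes.

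The main obstacle is the verification that $g$ survives as an $X$-weak upper gradient of $v$. I would invoke the standard fact from~\cite{Mal1} that $C_X(F) = 0$ implies zero $X$-modulus of the family $\Gamma_F$ of non-constant rectifiable curves meeting $F$. Granting this, for $\Mod_X$-a.e.\@ curve $\gamma$ one has $\gamma \cap F = \emptyset$, whence $v = u$ along $\gamma$, and the upper-gradient inequality for $v$ on $\gamma$ reduces verbatim to the one for $u$. Equivalently, this is the invariance of the $\NX$-quasi-norm under modification on sets of capacity zero, which is already implicit in the equivalence relation $\|u-v\|_\NX = 0 \Leftrightarrow u = v$ q.e.\@ quoted in Section~\ref{sec:prelim}.

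For the $\widetilde{C}_{X,r}$-identity there is also a one-line route avoiding this machinery: the preceding lemma yields $\widetilde{C}_{X,r}(E \cup F)^r \le \widetilde{C}_{X,r}(E)^r + \widetilde{C}_{X,r}(F)^r$, and the equivalence of the two capacities gives $\widetilde{C}_{X,r}(F) = 0$, so the reverse inequality follows at once. The $C_X$-claim, however, does \emph{not} follow from $\sigma$-quasi-additivity alone (which only gives $C_X(E \cup F) \le \cconc C_X(E)$), and this is why the zero-modulus argument is indispensable for the non-subadditive case.
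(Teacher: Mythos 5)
Your proposal is correct and follows essentially the same approach as the paper: the paper also dispatches $\widetilde{C}_{X,r}$ via the $\sigma$-subadditivity of $\widetilde{C}_{X,r}^r$, and for $C_X$ modifies a near-optimal admissible $u$ on $F$ (the paper takes $v = u + \chi_F$ where you take $v = u\chi_{\Pcal\setminus F} + \chi_F$, an immaterial difference) and appeals to~\cite[Corollary~5.11]{Mal1} for the transfer of the weak upper gradient under q.e.\@ modification, which is precisely the fact you unpack via the zero-modulus-of-curves-through-$F$ argument.
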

%
%
\begin{proof}
By monotonicity, $C_X(E\cup F) \ge C_X(E)$ and $\widetilde{C}_{X,r}(E\cup F) \ge \widetilde{C}_{X,r}(E)$. The converse inequality for $\widetilde{C}_{X,r}$ follows from the $\sigma$-subadditivity of $\widetilde{C}^r_{X,r}$. 

If $C_X(E) = \infty$, then the converse inequality holds trivially. 
Suppose now that $C_X(E)<\infty$. Let $\eps>0$ and let $\chi_E \le u\in \NX$ be such that $\|u\|_\NX \le \|u\|_X + \|g\|_X < C_X(E) + \eps$, where $g\in X$ is an upper gradient of $u$. Set $v= u + \chi_F$. Then, $v=u$ q.e., whence $\|v\|_X = \|u\|_X$ and $g$ is a weak upper gradient of $v$ by~\cite[Corollary~5.11]{Mal1}. As $v\ge \chi_{E\cup F}$, we have $C_X(E\cup F) \le \|v\|_\NX \le \|v\|_X + \|g\|_X < C_X(E) + \eps$. Letting $\eps\to 0$ shows that $C_X(E\cup F) \le C_X(E)$.
\end{proof}
%
%
The following result generalizes~\cite[Proposition~1.4]{BjoBjoSha} in a similar fashion as Bj\"orn, Bj\"orn, and Lehrb\"ack~\cite[Proposition~4.7]{BjoBjoLeh}. It shows that in locally compact (and hence in proper) metric measure spaces, the Sobolev capacity $C_X$ (and hence also $\widetilde{C}_{X,r}$) is an outer capacity at least for zero sets, given that lsc majorants provide good estimates of the function norm.
\begin{pro}
\label{pro:Cap-out-regular_0}
Assume that $\Pcal$ is locally compact and that $X$ is a quasi-Banach function lattice possessing the \emph{Vitali--Carath\'eodory property}, i.e., it satisfies \eqref{eq:VitaliCaratheodory}. Let $E\subset \Pcal$ with $C_X(E) = 0$. Then for every $\eps>0$, there is an open set $U\supset E$ with $C_X(U) < \eps$.
\end{pro}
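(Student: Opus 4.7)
The plan is to produce a single lower semicontinuous function $v\in\NX$ with small Newtonian norm and $v\equiv+\infty$ on $E$; the open superlevel sets of $v$ will then serve as the required neighborhoods. First, since $C_X(E)=0$, working with the equivalent Aoki--Rolewicz $r$-norm I would choose a sequence $u_k\in\NX$ with $\chi_E\le u_k\le 1$ and upper gradients $g_k\in X$ satisfying $\|u_k\|_X^r+\|g_k\|_X^r<2^{-k}\delta^r$ for a small $\delta>0$ to be fixed later. The Riesz--Fischer property then yields $u\coloneq\sum_k u_k\in X$ and $g\coloneq\sum_k g_k\in X$ with $X$-quasi-norms of order $\delta$; here $g$ is an upper gradient of $u$ (monotone sum of upper gradients) and, most importantly, $u\equiv+\infty$ on $E$ because each $u_k\ge 1$ there.

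The core step is to construct a lower semicontinuous $v\ge u$ that still lies in $\NX$ with small norm. For this I would invoke the Vitali--Carath\'eodory property piece by piece: for each $k$ pick an lsc majorant $\tilde g_k\ge g_k$ with $\|\tilde g_k\|_X^r<2\|g_k\|_X^r$, and then build an lsc function $v_k$ with $u_k\le v_k\le 1$ for which $\tilde g_k$ remains an upper gradient. The two-sided bound $0\le v_k\le 1$ keeps the curve-level construction controlled, and the local compactness of $\Pcal$ enters by exhausting $\Pcal$ by compact pieces on which the lsc function $\tilde g_k$ is well-behaved along curves, allowing the pasting of the modifications on successive compacta into a single lsc object without destroying the upper-gradient inequality. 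Summing via Riesz--Fischer then produces $v\coloneq\sum_k v_k\in \NX$, lsc, with upper gradient $h\coloneq\sum_k\tilde g_k$ and $\|v\|_X+\|h\|_X\lesssim\delta$, while $v\equiv+\infty$ on $E$.

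With such $v$ in hand, the set $U_t\coloneq\{v>t\}$ is open for every $t>0$ and contains $E$. The truncation $w_t\coloneq\min\{1,v/t\}$ dominates $\chi_{U_t}$ and admits $(h/t)\chi_{\{v\le t\}}$ as an upper gradient, so
\[
  C_X(U_t)\le\|w_t\|_{\NX}\lesssim\frac{\|v\|_X+\|h\|_X}{t}\lesssim\frac{\delta}{t}.
\]
Fixing any $t>1$ and choosing $\delta$ correspondingly small produces an open $U=U_t\supset E$ with $C_X(U)<\eps$. The principal obstacle lies in the middle step: Proposition~\ref{pro:VitaliCarath} only furnishes lsc approximations in the $X$-quasi-norm and says nothing about upper gradients, so reconciling the lsc regularization of $u$ with the Newtonian structure is not automatic and is precisely where the curve-level argument, the boundedness $v_k\le 1$, and the local compactness of $\Pcal$ are all needed in tandem.
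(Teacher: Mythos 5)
Your outer skeleton is right: produce a lower semicontinuous function in $\NX$ that equals $1$ (you over-shoot to $+\infty$, which is unnecessary) on $E$ with small Newtonian norm, then take a superlevel set. But the entire weight of the proposition rests on the step you flag as "the principal obstacle," and you have not discharged it. You say: "build an lsc function $v_k$ with $u_k\le v_k\le 1$ for which $\tilde g_k$ remains an upper gradient," and then wave at "curve-level argument," "boundedness $v_k\le 1$," and "exhausting $\Pcal$ by compact pieces" as though they combine in some obvious way. They do not. The Vitali--Carath\'eodory property gives you lsc majorants of a function in the $X$-quasi-norm only; upper-semicontinuous perturbations of $u_k$ from below and lsc majorants from above can wreck the upper-gradient inequality on a non-null family of curves, and nothing in your sketch controls this.

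The paper sidesteps the problem rather than solving it head on: it never tries to regularize a function like $\chi_E$ (or your $u_k$) into an lsc function with the \emph{same} upper gradient. Instead it takes an lsc majorant $\rho$ of an upper gradient $g$ of $\chi_E$ and an lsc majorant $v$ of $\chi_E$ itself, forms the open set $V=\{\tilde v>1/2\}$, and then \emph{defines a brand-new function}
\[
u(x)=\min\Bigl\{1,\ \inf_\gamma\int_\gamma(\rho+1)\,ds\Bigr\},
\]
the infimum over curves from $x$ to $\Pcal\setminus V$. Lower semicontinuity of this $u$ is not a soft consequence of $\rho$ being lsc; it is the content of \cite[Lemma~3.3]{BjoBjoSha}, which requires the ambient space to be proper and $\rho+1$ to be bounded away from zero. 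That is exactly where local compactness enters: one first treats the case where $E$ has an open neighborhood $G$ with compact closure, so that $\ittoverline{G}$ is a proper metric space on which that lemma applies; the general case is then handled by countably decomposing $E$, not by "pasting lsc modifications on successive compacta." Moreover $(\rho+1)\chi_V$ is an upper gradient of $u$ by \cite[Lemmata~3.1, 3.2]{BjoBjoSha}, and the $+1$ in the integrand is essential to force $u=1$ on $E$ (you get $\int_\gamma(\rho+1)\,ds\ge|\chi_E(x)-\chi_E(y)|+l_\gamma>1$). None of this mechanism appears in your proposal, so as written there is a genuine gap exactly where you suspected one.

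Two smaller points: the geometric summation $u=\sum_k u_k$ buys you nothing here (a single admissible function with $\|g\|_X<\eps$ suffices), and your final capacity estimate needs $U_t\supset E$ for the \emph{chosen} $t$, which your $v\equiv+\infty$ on $E$ does ensure but at the cost of routing through an infinite sum that would still need the unproved middle step for each summand.
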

%
%
\begin{proof}
Suppose first that there is an open set $G \supset E$ such that $\ittoverline{G}$ is compact.

Let $\eps>0$. Since $C_X(E) = 0$, we have $\|\chi_E\|_\NX = 0$ and there is an upper gradient $g\in X$ of $\chi_E$ such that $\|g\|_X<\eps$. By \eqref{eq:VitaliCaratheodory}, we can find $v,\rho \in X \cap \lsc(\Pcal)$ that satisfy $v\ge \chi_E$ and $\rho \ge g$ everywhere in $\Pcal$, while $\|v\|_X < \eps$ and $\|\rho\|_X<\eps$. Let $\tilde{v} = v \chi_G$. Since $G$ is open and contains $E$, we have that $\chi_E \le \tilde{v} \in X \cap \lsc(\Pcal)$ and $\|\tilde{v}\|_X < \eps$ as well.

Let $V = \{x\in\Pcal: \tilde{v}(x) > 1/2\}$. Then, $E \subset V \subset G$ and $V$ is open. Furthermore, $\|\chi_V\|_X \le 2 \|\tilde{v}\|_X < 2\eps$.
Let
\[ 
  u(x) = \min \biggl\{1, \inf_\gamma \int_\gamma (\rho + 1)\,ds\biggr\},
\]
where the infimum is taken over all (including constant) curves connecting $x$ to the closed set $\Pcal \setminus V$. Then, $u|_{\ittoverline{G}}\in \lsc(\ittoverline{G})$ by Bj\"orn, Bj\"orn and Shanmugalingam~\cite[Lem\-ma~3.3]{BjoBjoSha} since $\ittoverline{G}$, being compact, is a proper metric space and $\rho+1$ is bounded away from zero. Consequently, $u\in\lsc(\Pcal)$ as $u\equiv 0$ on $\Pcal \setminus V \supset \Pcal \setminus G$.

Now, let $U=\{ x\in \Pcal: u(x)>1/2\}$. The set $U$ is open due to the semicontinuity of~$u$. We can show that $u = 1$ on $E$, whence $E \subset U$. Indeed, let $\gamma$ be a curve connecting arbitrary $x \coloneq \gamma(0) \in E$ with $y \coloneq \gamma(l_\gamma) \in\Pcal \setminus V$. Then, $\int_\gamma (\rho + 1)\,ds \ge |\chi_E(x) - \chi_E(y)| + l_\gamma > 1$ as $\rho$ is an upper gradient of $\chi_E$. Furthermore, $u\le \chi_V$ and $(\rho+1)\chi_V$ is an upper gradient of $u$ due to~\cite[Lemmata~3.1 and~3.2]{BjoBjoSha}.
We can therefore estimate the capacity
\begin{align*}
  C_X(U) & \le 2 \|u\|_\NX \le 2 \bigl(\|\chi_V\|_X + \| (\rho+1)\chi_V\|_X\bigr) \le 2 \bigl(\|\chi_V\|_X + \cconc(\|\rho \chi_V\|_X + \|\chi_V\|_X)\bigr) \\
	&\le 2 \bigl((1+\cconc) \|\chi_V\|_X + \cconc \| \rho \|_X\bigr) \le 2 \bigl((1+\cconc) 2\eps + \cconc \eps \bigr) \le 10 \cconc \eps.
\end{align*}

If no open neighborhood of $E$ has a compact closure, then we can apply separability and the local compactness of $\Pcal$ to write $E=\bigcup_{n=1}^\infty E_n$ so that for each $n\in\Nbb$ there is an open set $G_n \supset E_n$ with a compact closure. In particular, $C_X(E_n) = 0$. By the previous part of the proof, we can find open sets $U_n \supset E_n$ with $C_X(U_n) < \eps/(2\cconc)^n$. Let now $U = \bigcup_{n=1}^\infty U_n$. Then, $U$ is open and $C_X(U) \le \sum_{n=1}^\infty \cconc^n C_X(U_n) < \eps$ by the $\sigma$-quasi-additivity of $C_X$.
\end{proof}
\begin{rem}
In the previous claim, it in fact suffices to assume that there exists an open set $G\supset E$ that is locally compact instead of requiring that the entire space $\Pcal$ is locally compact. It is, however, currently unknown whether such local compactness is really necessary. On the other hand, the Vitali--Carath\'eodory property is crucial. In view of Proposition~\ref{pro:VitaliCarath}, it suffices that $X$ contains bounded functions with bounded support and these have absolutely continuous norm in $X$. In~\cite{BjoBjoMal}, Bj\"orn, Bj\"orn and Mal\'y have constructed a metric measure space $\Pcal$ and a function space $X=X(\Pcal)$ such that Propositions~\ref{pro:VitaliCarath} and~\ref{pro:Cap-out-regular_0} fail.
\end{rem}
%
%
%
%
\section{Quasi-continuity and its consequences}
\label{sec:quasicontinuity}
In this section, we study when Newtonian functions possess a Luzin-type property, the so-called quasi-continuity, which then leads to the fact that the Sobolev capacity is an outer capacity. The Sobolev capacity defined via the Newtonian quasi-norm characterizes the equivalence classes well, but in general it need not be an outer capacity then. Outside of the Newtonian setting, it is customary to introduce the Sobolev capacity so that it is an outer capacity by definition, cf.\@ Heinonen, Kilpel\"ainen and Martio~\cite[Definition~2.35]{HeiKilMar} or Kinnunen and Martio~\cite[Section~3]{KinMar}.

In Section~\ref{sec:density}, the quasi-continuity will help us establishing the density of compactly supported Lipschitz functions in $\NnX(\Omega)$, where $\Omega \subset \Pcal$ is open.
%
%
\begin{df}
A function $u: \Pcal \to \overline{\Rbb}$ is \emph{weakly quasi-continuous} if for every $\eps>0$ there is a set $E \subset \Pcal$ with $C_X(E) < \eps$ such that $u\vert_{\Pcal \setminus E}$ is continuous. If the set $E$ can be chosen open for every $\eps>0$, then $u$ is \emph{quasi-continuous}.
\end{df}
When an extended real-valued function is said to be continuous, we mean that the function does not attain the values $\pm \infty$ and is in fact real-valued.

As the capacities $C_X$ and $\widetilde{C}_{X,r}$ are equivalent, it is insignificant whether the notion of (weak) quasi-continuity is defined using $C_X$ or $\widetilde{C}_{X,r}$.
%
%
\begin{lem}
\label{lem:quasicont-preserved-qe}
Assume that for every $\eps$ and every $E\subset \Pcal$ with $C_X(E) = 0$, there is an open set $U\supset E$ with $C_X(U) < \eps$. If $u$ is quasi-continuous, then every $v$ that coincides with $u$ q.e.\@ is also quasi-continuous.
\end{lem}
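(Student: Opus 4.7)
The plan is to use the hypothesis to ``absorb'' the exceptional set where $u$ and $v$ differ into the open set witnessing quasi-continuity, after enlarging the latter if needed.

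First, I would set $F = \{x \in \Pcal : u(x) \neq v(x)\}$, so that $C_X(F) = 0$ by the assumption that $u = v$ q.e. Fix an arbitrary $\eps > 0$. By the hypothesis on the outer regularity of $C_X$ on zero sets, I can choose an open set $U \supset F$ with $C_X(U) < \eps/(2\cconc^2)$. Then, by quasi-continuity of $u$, there is an open set $E_1 \subset \Pcal$ with $C_X(E_1) < \eps/(2\cconc)$ such that $u|_{\Pcal \setminus E_1}$ is continuous.

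Next, I would set $E = E_1 \cup U$, which is open as a union of open sets. The $\sigma$-quasi-additivity of $C_X$ stated at the end of Section~\ref{sec:capacity} yields
\[
  C_X(E) \le \cconc C_X(E_1) + \cconc^2 C_X(U) < \frac{\eps}{2} + \frac{\eps}{2} = \eps.
\]
On the complement $\Pcal \setminus E = (\Pcal \setminus E_1) \cap (\Pcal \setminus U)$, we have $u = v$ pointwise (because $F \subset U$), so $v|_{\Pcal \setminus E}$ equals the restriction of the continuous function $u|_{\Pcal \setminus E_1}$ to the subset $\Pcal \setminus E$, hence it is continuous. Since $\eps > 0$ was arbitrary, $v$ is quasi-continuous.

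There is essentially no obstacle here; the only subtlety is that $C_X$ is merely $\sigma$-quasi-additive rather than $\sigma$-subadditive, but this is easily handled by absorbing the modulus of concavity $\cconc$ into the choice of the capacity bounds for $E_1$ and $U$. (Alternatively, one could work with $\widetilde{C}_{X,r}$, whose $r$th power is a genuine outer measure, and use the equivalence of the two capacities.)
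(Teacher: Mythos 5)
Your proof is correct and follows essentially the same route as the paper: identify the zero-capacity set $F$ where $u \neq v$, enclose it in a small open set using the hypothesis, union that with the open witness of quasi-continuity of $u$, and control the capacity of the union via $\sigma$-quasi-additivity with appropriately scaled $\eps$'s. The only cosmetic difference is the choice of which set gets $\eps/(2\cconc)$ and which gets $\eps/(2\cconc^2)$, which is immaterial.
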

In view of Proposition~\ref{pro:Cap-out-regular_0}, the hypotheses of the lemma are satisfied if $\Pcal$ is locally compact and if $X$ has the Vitali--Carath\'eodory property~\eqref{eq:VitaliCaratheodory} (which, in particular, it does by Proposition~\ref{pro:VitaliCarath} if $X$ contains all bounded functions with bounded support and these have absolutely continuous quasi-norm in $X$).
\begin{proof}
Let $\eps>0$. Define $E=\{x\in\Pcal: u(x) \neq v(x)\}$, so $C_X(E) = 0$. Thus, we can find an open set $U \supset E$ with $C_X(U) < \eps/2\cconc$.
Since $u$ is quasi-continuous, there is an open set $V$ with $C_X(V) < \eps/2\cconc^2$ such that $u\vert_{\Pcal \setminus V}$ is continuous. Let $G = U \cup V$. Then, $C_X(G) < \eps$ and $v\vert_{\Pcal \setminus G} = u\vert_{\Pcal \setminus G}$ is continuous. Hence, $v$ is quasi-continuous.
\end{proof}
%
%
Next, we will see that if the Sobolev capacity is an outer capacity, then the distinction between weak quasi-continuity and quasi-continuity is not needed.
\begin{pro}
\label{pro:qc=wqc}
Suppose that $C_X$ or $\widetilde{C}_{X,r}$ is an outer capacity. Then, a function is quasi-continuous if and only if it is weakly quasi-continuous.
\end{pro}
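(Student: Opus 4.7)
The forward implication is immediate from the definitions, since every open set is a set: if $u$ is quasi-continuous, the open ``exceptional'' set witnessing quasi-continuity also witnesses weak quasi-continuity. So the content of the proposition lies entirely in the reverse implication.

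For the reverse direction, the plan is a standard two-step enlargement argument. Assume $u$ is weakly quasi-continuous and fix $\eps > 0$. First, apply weak quasi-continuity with threshold $\eps/2$ to obtain a (not necessarily open) set $E \subset \Pcal$ with $C_X(E) < \eps/2$ and $u|_{\Pcal \setminus E}$ continuous. Second, invoke the outer capacity hypothesis: since $C_X$ (or $\widetilde{C}_{X,r}$, which is equivalent) is an outer capacity, there is an open set $U \supset E$ with $C_X(U) < C_X(E) + \eps/2 < \eps$. Finally, observe that $\Pcal \setminus U \subset \Pcal \setminus E$, and the restriction of a continuous function to a subspace is still continuous, so $u|_{\Pcal \setminus U}$ is continuous. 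This produces an open exceptional set of capacity less than $\eps$, which is exactly quasi-continuity.

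There is essentially no obstacle; the only things to keep track of are: (i) that monotonicity of $C_X$ is used implicitly (which holds since the infimum in Definition~\ref{df:capacity} is taken over a larger family of admissible functions when the set shrinks), (ii) that the argument works equally well with either $C_X$ or $\widetilde{C}_{X,r}$ because the two are equivalent and the notion of ``outer capacity'' for one transfers to the other up to constants, which can be absorbed by adjusting $\eps/2$ to a smaller fraction if needed, and (iii) that when one only knows $\widetilde{C}_{X,r}$ is an outer capacity, the same argument applies verbatim replacing $C_X$ by $\widetilde{C}_{X,r}$ throughout. No use of upper gradients, modulus of concavity, or the Vitali--Carath\'eodory property is required here; this is a pure consequence of outer regularity of the capacity.
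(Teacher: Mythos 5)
Your proof is correct and takes essentially the same route as the paper: extract a small-capacity exceptional set from weak quasi-continuity, then enlarge it to an open set of comparable capacity using outer regularity. The only cosmetic difference is that you split $\eps$ in two, whereas the paper just picks $E$ with $C_X(E)<\eps$ and then uses the defining infimum of outer capacity to get an open $G\supset E$ with $C_X(G)<\eps$ directly; also, the transfer between $C_X$ and $\widetilde{C}_{X,r}$ that you mention is unnecessary, since one simply runs the whole argument in terms of whichever capacity is assumed outer.
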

\begin{proof}
Quasi-continuous functions are trivially weakly quasi-continuous.

Let $u$ be weakly quasi-continuous and let $\eps>0$. Then, there is a set $E\subset \Pcal$ with $C_X(E) < \eps$ (resp.\@ $\widetilde{C}_{X,r}(E) < \eps$) such that $u|_{\Pcal \setminus E}$ is continuous. Since $C_X$ (resp.\@ $\widetilde{C}_{X,r}$) is an outer capacity, there is an open set $G \supset E$ with $C_X(G) < \eps$ (resp.\@ $\widetilde{C}_{X,r}(G) < \eps$). Then, $u|_{\Pcal \setminus G}$ is also continuous, whence $u$ is quasi-continuous.
\end{proof}
%
%
For functions that are absolutely continuous along $\Mod_X$-a.e.\@ curve, the equality a.e.\@ implies equality on a larger set, namely, q.e.\@ (see~\cite[Proposition~6.12]{Mal1}). Similarly, we have the following result for quasi-continuous functions.
%
%
\begin{pro}
\label{pro:qc_ae=qe}
Suppose that both $u$ and $v$ are quasi-continuous. If $u=v$ a.e., then $u=v$ q.e.
\end{pro}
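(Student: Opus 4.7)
The plan is to show that $E := \{x \in \Pcal : u(x) \neq v(x)\}$ has $X$-capacity zero. Fix $\eps > 0$. Using the quasi-continuity of $u$ and $v$, I choose open sets $U, V \subset \Pcal$ with $C_X(U), C_X(V) < \eps$ such that $u$ is continuous (hence real-valued) on $\Pcal \setminus U$ and $v$ is continuous on $\Pcal \setminus V$. Setting $G := U \cup V$, the set $G$ is open, both $u$ and $v$ are real-valued and continuous on $\Pcal \setminus G$, and the $\sigma$-quasi-additivity of $C_X$ gives $C_X(G) \le \cconc C_X(U) + \cconc^2 C_X(V) < (\cconc + \cconc^2) \eps$.

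The crucial observation is that $u - v$ is continuous on the metric subspace $\Pcal \setminus G$, so the set $F := E \cap (\Pcal \setminus G)$ is relatively open in $\Pcal \setminus G$. By the definition of the subspace topology, I can write $F = \widetilde{F} \cap (\Pcal \setminus G) = \widetilde{F} \setminus G$ for some open set $\widetilde{F} \subset \Pcal$ (constructed explicitly as $\widetilde{F} = \bigcup_{x \in F} B(x, r_x)$ with radii $r_x > 0$ chosen small enough that $B(x, r_x) \setminus G \subset F$). Since $F \subset E$ has $\mu$-measure zero, so does $\widetilde{F} \setminus G$. Applying the rigidity property from Proposition~\ref{pro:cap_rigid} to the open set $\widetilde{F}$ and the $\mu$-null set $\widetilde{F} \setminus G$ then yields
\[
  C_X(\widetilde{F}) = C_X(\widetilde{F} \cap G) \le C_X(G).
\]

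To conclude, note that $E \subset G \cup \widetilde{F}$, so quasi-subadditivity of $C_X$ combined with the above estimate gives
\[
  C_X(E) \le C_X(G \cup \widetilde{F}) \le \cconc C_X(G) + \cconc^2 C_X(\widetilde{F}) \le (\cconc + \cconc^2) C_X(G).
\]
Letting $\eps \to 0$ yields $C_X(E) = 0$, that is, $u = v$ q.e. The analogous claim for $\widetilde{C}_{X,r}$ follows by an identical argument, with cleaner constants coming from the genuine $\sigma$-subadditivity of $\widetilde{C}_{X,r}^r$. The main subtle point is the appeal to Proposition~\ref{pro:cap_rigid}: without the rigidity property, the fact that $F$ is open only in the subspace $\Pcal \setminus G$ (and not in $\Pcal$ itself) would be difficult to exploit, since $\mu$-null sets need not have $C_X$-capacity zero.
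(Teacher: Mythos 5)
Your proof is correct and follows essentially the same approach as the paper, which defers to Kilpel\"ainen's argument (reference~\cite{Kil}) adapted to quasi-continuity; the key steps---using quasi-continuity to produce an open exceptional set $G$, observing that the disagreement set is relatively open in $\Pcal \setminus G$, extending it to an open set $\widetilde{F}$, and invoking the rigidity of Proposition~\ref{pro:cap_rigid}---are precisely the ones the paper's sketch alludes to. You have simply written out in full the argument that the paper leaves to the citation.
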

%
%
\begin{proof}[Sketch of proof]
Kilpel\"ainen~\cite{Kil} has proven the claim for abstract outer capacities that satisfy the rigidity condition of Proposition~\ref{pro:cap_rigid} under the assumption that $u$ and $v$ are weakly quasi-continuous. His proof works verbatim if the hypotheses of the capacity being outer and the weak quasi-continuity of $u$ and $v$ are replaced by the hypothesis that $u$ and $v$ are quasi-continuous.
\end{proof}
%
%
Since a Egorov-type convergence theorem (see~\cite[Corollary~7.2]{Mal1}) holds in Newtonian spaces based on an arbitrary quasi-Banach function lattice $X$, we will obtain that Newtonian functions are weakly quasi-continuous provided that continuous functions are dense in $\NX$.

Sufficient conditions for density of (Lipschitz) continuous functions in $\NX$ have been discussed in~\cite{Mal3} using the connection between Haj\l asz gradients, fractional sharp maximal functions, and (weak) upper gradients in doubling $p$-Poincar\'e spaces (see Definition~\ref{df:PI} below). Roughly speaking, Lipschitz functions are dense in $\NX$ if a certain maximal operator of Hardy--Littlewood type satisfies weak norm estimates and the quasi-norm of $X$ is absolutely continuous.

In Theorem~\ref{thm:noncompl-cont} below, it will be shown that Newtonian functions have continuous representatives (with respect to equality q.e.\@) if $\Pcal$ supports a certain Poincar\'e inequality, $\mu$ is doubling and the quasi-norm of $X$ is sufficiently restrictive in comparison with the ``dimension of the measure''. In that case, the continuous functions are trivially dense in $\NX$.

Using the reflexivity of $N^{1,p} \coloneq  N^1 L^p$ that was established by Ambrosio, Colombo and Di~Marino~\cite{AmbColDiM}, we can deduce from Ambrosio, Gigli and Savar\'e~\cite{AmbGigSav} that Lipschitz functions are dense in $N^{1,p}$ for $p\in (1, \infty)$ if $\Pcal$ is compact, endowed with a doubling metric. In particular, neither a Poincar\'e inequality, nor a doubling property of the measure is needed.
\begin{pro}
\label{pro:newt.fcns-are-wqcont}
If continuous functions are dense in $\NX$, then every $u\in\NX$ has a quasi-continuous representative $\tilde{u}=u$ q.e. Hence, $u$ is weakly quasi-continuous.
\end{pro}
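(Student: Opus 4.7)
The plan is to run an Egorov-type argument tailored to the Sobolev capacity. First, using the density hypothesis, I would pick continuous $u_n\in\NX$ with $u_n\to u$ in $\NX$, and after extracting a subsequence (not relabelled) may assume $\|u_{n+1}-u_n\|_{\NX}^r \le 4^{-nr}$ and $\|u_n-u\|_{\NX}^r \le 4^{-nr}$, using that $\NX$ carries an equivalent $r$-norm. Next I would record a Chebyshev-type bound for the Sobolev capacity: for $v\in\NX$ and $t>0$, the function $|v|/t$ is admissible in the definition of $\widetilde{C}_{X,r}(\{|v|>t\})$ with upper gradient $g_v/t$, so
\[
\widetilde{C}_{X,r}\bigl(\{|v|>t\}\bigr)^r \le t^{-r}\bigl(\|v\|_X^r + \|g_v\|_X^r\bigr) \le t^{-r}\|v\|_{\NX}^r.
\]

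I would apply this with $v = u_{n+1}-u_n$ and $t = 2^{-n/2}$. The crucial observation is that because $u_{n+1}-u_n$ is continuous, the superlevel set $E_n \coloneq \{|u_{n+1} - u_n| > 2^{-n/2}\}$ is \emph{open}, and $\widetilde{C}_{X,r}(E_n)^r \le 2^{-3nr/2}$. Setting $F_N \coloneq \bigcup_{n\ge N} E_n$ yields an open set with $\widetilde{C}_{X,r}(F_N)^r \to 0$ as $N\to\infty$ by $\sigma$-subadditivity of $\widetilde{C}_{X,r}^r$. On $\Pcal \setminus F_N$ the sequence $\{u_n\}$ is uniformly Cauchy, so I would define $\tilde u(x) \coloneq \lim_{n\to\infty} u_n(x)$ where the limit exists and $\tilde u(x)\coloneq 0$ otherwise; each restriction $\tilde u|_{\Pcal \setminus F_N}$ is then a uniform limit of continuous functions and hence continuous, so $\tilde u$ is quasi-continuous.

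To verify $\tilde u = u$ q.e., I would apply the same Chebyshev bound to $u_n-u$: although the sets $A_n \coloneq \{|u_n-u|>2^{-n/2}\}$ need not be open, the estimate $\widetilde{C}_{X,r}(A_n)^r \le 2^{-3nr/2}$ combined with subadditivity of $\widetilde{C}_{X,r}^r$ gives $\widetilde{C}_{X,r}\bigl(\bigcap_N \bigcup_{n\ge N} A_n\bigr) = 0$; off that exceptional set $u_n(x)\to u(x)$, so $\tilde u = u$ there. Finally, the weak quasi-continuity of $u$ itself follows: given $\eps>0$, I would combine an open set $G$ witnessing quasi-continuity of $\tilde u$ with $C_X(G)<\eps/(2\cconc)$ together with the capacity-zero set $\{\tilde u\ne u\}$ via Lemma~\ref{lem:C_X-addnull} and $\sigma$-quasi-additivity of $C_X$ to obtain a set of capacity $<\eps$ on whose complement $u=\tilde u$ is continuous. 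The main technical subtlety — obtaining genuine quasi-continuity of $\tilde u$ rather than only the weak version — is precisely why the continuity of the approximating $u_n$ is exploited, so as to render the exceptional sets $F_N$ open.
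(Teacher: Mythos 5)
Your proposal is correct and follows the same Egorov-type strategy the paper uses, but where the paper simply invokes \cite[Corollary~7.2]{Mal1} as a black box (producing $\tilde u = u$ q.e.\ with a subsequence converging uniformly off an open set of small capacity), you re-derive that convergence theorem from first principles via a Chebyshev bound for $\widetilde{C}_{X,r}$. This has the advantage of making explicit a point the paper's citation glosses over: why the exceptional sets $F_N$ can be taken \emph{open}. It is precisely because the approximants $u_n$ are continuous, so the superlevel sets $E_n = \{|u_{n+1}-u_n|>2^{-n/2}\}$ are open, and the union $F_N$ inherits openness. The remaining steps — defining $\tilde u$ as the pointwise limit, showing $\tilde u = u$ off a capacity-null set by applying the same Chebyshev bound to $u_n - u$, and deriving weak quasi-continuity of $u$ itself via Lemma~\ref{lem:C_X-addnull} — all match the paper's argument. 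One small repair: your Chebyshev inequality
\[
\widetilde{C}_{X,r}\bigl(\{|v|>t\}\bigr)^r \le t^{-r}\bigl(\|v\|_X^r + \|g_v\|_X^r\bigr) \le t^{-r}\|v\|_{\NX}^r
\]
is off in the last step, since for $r<1$ the concavity of $s\mapsto s^r$ gives $\|v\|_X^r + \|g_v\|_X^r \ge (\|v\|_X + \|g_v\|_X)^r = \|v\|_{\NX}^r$, the reverse direction. The correct bound is $\|v\|_X^r + \|g_v\|_X^r \le 2\|v\|_{\NX}^r$ (each summand is $\le \|v\|_{\NX}^r$), and the resulting factor of $2$ is harmless to the summability argument.
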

%
%
\begin{proof}
Let $u\in\NX$ be approximated by a sequence $\{u_k\}_{k=1}^\infty\subset \Ccal(\Pcal)\cap \NX$ so that $u_k \to u$ in $\NX$ as $k\to\infty$. By~\cite[Corollary~7.2]{Mal1}, there is $\tilde{u}\in\NX$ such that $\tilde{u}=u$ q.e.\@ and for every $\eps>0$ there exists an open set $U_\eps$ with $C_X(U_\eps)<\eps$ such that a subsequence $\{u_{k_j}\}_{j=1}^\infty$ converges uniformly to $\tilde{u}$ on $\Pcal \setminus U_\eps$. Hence, $\tilde{u}|_{\Pcal\setminus U_\eps}$ is continuous and $\tilde{u}$ is quasi-continuous. Writing $E = \{x\in\Pcal: u(x) \neq \tilde{u}(x)\}$, we have $C_X(U_\eps \cup E) < \eps$ by Lemma~\ref{lem:C_X-addnull} and $u\vert_{\Pcal \setminus(U_\eps \cup E)}$ is continuous whence $u$ is weakly quasi-continuous.
\end{proof}
%
%
In Proposition~\ref{pro:Cap-out-regular_0}, we saw that $C_X$ is an outer capacity for zero sets under certain hypotheses. As a consequence, we obtain that functions with a quasi-continuous representative are in fact quasi-continuous by Lemma~\ref{lem:quasicont-preserved-qe}. Then, Newtonian functions are quasi-continuous by Proposition~\ref{pro:newt.fcns-are-wqcont} provided that they can be approximated by continuous functions. Hence, we have the following result.
\begin{cor}
\label{cor:newt.fcns-are-qcont}
Assume that $\Pcal$ is locally compact and $X$ is a quasi-Banach function lattice with the Vitali--Carath\'eodory property \eqref{eq:VitaliCaratheodory}. In particular, we may assume that $X$ contains characteristic functions of all bounded sets and these have absolutely continuous norm in $X$. If continuous functions are dense in $\NX$, then every $u\in \NX$ is quasi-continuous.
\end{cor}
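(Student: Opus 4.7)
The plan is to assemble three results from the excerpt and observe that they chain together to give exactly what is claimed. The corollary is essentially the combination of Proposition~\ref{pro:newt.fcns-are-wqcont}, Proposition~\ref{pro:Cap-out-regular_0} and Lemma~\ref{lem:quasicont-preserved-qe}, plus a reference to Proposition~\ref{pro:VitaliCarath} to justify the ``in particular'' clause.

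First, I would fix $u \in \NX$. Since continuous functions are dense in $\NX$ by hypothesis, Proposition~\ref{pro:newt.fcns-are-wqcont} applies and produces a quasi-continuous function $\tilde{u}$ with $\tilde{u} = u$ q.e. The content of the corollary beyond Proposition~\ref{pro:newt.fcns-are-wqcont} is thus the upgrade from ``$u$ has a quasi-continuous representative'' to ``$u$ itself is quasi-continuous''. This upgrade is delivered by Lemma~\ref{lem:quasicont-preserved-qe}, whose only hypothesis is that sets of capacity zero can be enclosed in open sets of arbitrarily small capacity.

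The second step is to verify precisely that hypothesis. Here the assumptions on $\Pcal$ and $X$ enter: local compactness of $\Pcal$ together with the Vitali--Carath\'eodory property~\eqref{eq:VitaliCaratheodory} for $X$ are exactly the hypotheses of Proposition~\ref{pro:Cap-out-regular_0}, which yields that for every $E \subset \Pcal$ with $C_X(E) = 0$ and every $\eps > 0$ there is an open $U \supset E$ with $C_X(U) < \eps$. This is precisely what Lemma~\ref{lem:quasicont-preserved-qe} requires, so applying it to $\tilde{u}$ and $u$ shows $u$ is quasi-continuous, completing the main argument.

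Finally, for the ``in particular'' sentence, I would note that if $X$ contains characteristic functions of all bounded sets and these have absolutely continuous quasi-norm in $X$, then the hypotheses of Proposition~\ref{pro:VitaliCarath} are satisfied (for any bounded $B$, $\chi_B \in X$ and satisfies~\ref{df:AC}), so~\eqref{eq:VitaliCaratheodory} holds and $X$ has the Vitali--Carath\'eodory property, reducing this case to the one already handled. There is no real obstacle here: every nontrivial piece has been done in the preceding propositions, and the only task is to check that their hypotheses line up. The proof should be just a few lines.
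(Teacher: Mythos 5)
Your proposal is correct and matches the paper's argument exactly: the paper also derives the corollary by combining Proposition~\ref{pro:newt.fcns-are-wqcont} (existence of a quasi-continuous representative), Proposition~\ref{pro:Cap-out-regular_0} (outer regularity of $C_X$ on zero sets), and Lemma~\ref{lem:quasicont-preserved-qe} (upgrading a quasi-continuous representative to quasi-continuity of $u$ itself), with Proposition~\ref{pro:VitaliCarath} handling the ``in particular'' clause.
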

%
%
The original idea of Proposition~\ref{pro:newt.fcns-are-wqcont} and Corollary~\ref{cor:newt.fcns-are-qcont} for $X=L^p$ under considerably stronger assumptions can be traced back to Shanmugalingam~\cite{Sha}, whose result was later generalized by Bj\"orn, Bj\"orn, and Shanmugalingam~\cite{BjoBjoSha}.
%
%
\begin{pro}
\label{pro:wqc-continuous}
Suppose that there is a cover $\Pcal = \bigcup_{k=1}^\infty \Pcal_k$, where $\Pcal_k$ is open and that for every $k\in\Nbb$, there is $\delta_k>0$ such that $C_X(\{x\}) \ge \delta_k$ for each $x\in\Pcal_k$. Then, weakly quasi-continuous functions are continuous.

In particular, if all functions in $\NX$ are weakly quasi-continuous (which holds, e.g., if continuous functions are dense in $\NX$), then $\NX \subset \Ccal(\Pcal)$.
\end{pro}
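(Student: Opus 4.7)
The plan is to show pointwise continuity of a weakly quasi-continuous $u$: fix an arbitrary $x \in \Pcal$ and tolerance $\eps' > 0$, and find a ball around $x$ on which $u$ stays within $\eps'$ of $u(x)$. The single ingredient doing all the work is that any set with capacity strictly below $\delta_k$ is, by monotonicity of $C_X$, forced to avoid every point of $\Pcal_k$.

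First I would pick an index $k$ with $x \in \Pcal_k$ and, using that $\Pcal_k$ is open, a radius $r > 0$ such that $B(x, r) \subset \Pcal_k$. Then I apply weak quasi-continuity of $u$ with threshold $\eta$ chosen strictly below $\delta_k$, producing a set $E \subset \Pcal$ with $C_X(E) < \eta < \delta_k$ such that $u|_{\Pcal \setminus E}$ is continuous. For any $y \in B(x, r)$ the hypothesis gives $C_X(\{y\}) \ge \delta_k > C_X(E)$, so $\{y\} \nsubset E$ by monotonicity of the capacity; equivalently $B(x, r) \subset \Pcal \setminus E$. In particular $x \in \Pcal \setminus E$, so $u(x) \in \Rbb$ by the excerpt's convention that continuous extended real-valued functions take only finite values.

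Continuity of $u|_{\Pcal \setminus E}$ at $x$ now yields some $\delta \in (0, r]$ with $|u(y) - u(x)| < \eps'$ for all $y \in B(x, \delta) \cap (\Pcal \setminus E)$, and the inclusion $B(x, r) \subset \Pcal \setminus E$ already established reduces this intersection to $B(x, \delta)$; the resulting estimate is continuity of $u$ at $x$. Since $x$ was arbitrary, $u \in \Ccal(\Pcal)$. The ``in particular'' clause follows by combining this main assertion with Proposition~\ref{pro:newt.fcns-are-wqcont}: density of continuous functions in $\NX$ supplies weak quasi-continuity of every Newtonian function, and weak quasi-continuity supplies continuity by the first half.

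I do not foresee a real obstacle; the argument simply ties together the capacity lower bound, monotonicity of $C_X$, and the local topology of the cover $\{\Pcal_k\}$. The only subtlety is selecting $\eta$ strictly below $\delta_k$ rather than equal to it, so that the strict inequality $C_X(\{y\}) > C_X(E)$ actually precludes $y \in E$, and shrinking $r$ so that the ball $B(x, r)$ fits entirely inside a single $\Pcal_k$ before the capacity bound is invoked pointwise.
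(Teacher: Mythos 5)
Your argument is correct and matches the paper's proof in substance: both rest on the observation that the exceptional set $E$ produced by weak quasi-continuity at a threshold below $\delta_k$ must be disjoint from $\Pcal_k$ (since each singleton there has capacity at least $\delta_k > C_X(E)$), after which continuity of the restriction to $\Pcal\setminus E$ transfers to genuine continuity on the open set $\Pcal_k$. The paper states this once for the whole $\Pcal_k$ rather than localizing to a ball $B(x,r)\subset\Pcal_k$, but that is only a cosmetic difference.
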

\begin{proof}
Set $\eps_k = \delta_k/2$ for every $k\in\Nbb$. Let $u$ be weakly quasi-continuous. Then, there is $E_k$ with $C_X(E_k)<\eps_k$ such that $u|_{\Pcal\setminus E_k}$ is continuous. Consequently, $\Pcal_k \cap E_k = \emptyset$ because every $x\in \Pcal_k$ satisfies $C_X(\{x\}) \ge \delta_k > C_X(E_k)$. Thus, $u|_{\Pcal_k} \in \Ccal(\Pcal_k)$. Since $\Pcal_k$ is open, $u$ is continuous at every point of $\Pcal_k$.

Finally, $u$ is continuous everywhere in $\Pcal$ as $\Pcal = \bigcup_{k=1}^\infty \Pcal_k$. Hence, $u\in\Ccal(\Pcal)$.

If continuous functions are dense in $\NX$, then all functions in $\NX$ are weakly quasi-continuous by Proposition~\ref{pro:newt.fcns-are-wqcont}.
\end{proof}
%
%
So far, we have seen that $C_X$ (or $\widetilde{C}_{X,r}$) being an outer capacity on zero sets implies that Newtonian functions are quasi-continuous (under some additional assumptions). The next proposition shows that the converse is actually stronger. Namely, if Newtonian functions are quasi-continuous, then $\widetilde{C}_{X,r}$ is an outer capacity on all sets (without any additional assumptions). An analogous result for $\widetilde{C}_{L^p, p}$ with $p\in[1, \infty)$ was given in~\cite{BjoBjoSha}.
\begin{pro}
\label{pro:C_X-outer}
Assume that all functions in $\NX$ are quasi-continuous. Then, $\widetilde{C}_{X,r}$ is an \emph{outer capacity}, i.e., $\widetilde{C}_{X,r}(E) = \inf \widetilde{C}_{X,r}(G)$ for every $E\subset \Pcal$, where the infimum is taken over all open sets $G\supset E$. Moreover, if $X$ is normed, then $C_X$ is an outer capacity.
\end{pro}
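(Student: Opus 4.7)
The plan is, given any $E \subset \Pcal$ with $\widetilde{C}_{X,r}(E)<\infty$ (the infinite case being trivial) and any $\eps>0$, to exhibit an open $G\supset E$ whose capacity only slightly exceeds $\widetilde{C}_{X,r}(E)$; monotonicity handles the reverse bound. I would start with an almost-optimal admissible $u\in\NX$ for $E$ with $\chi_E \le u \le 1$ (available by~\cite[Proposition~3.2]{Mal1}) and upper gradient $g$ such that $\|u\|_X^r + \|g\|_X^r < \widetilde{C}_{X,r}(E)^r + \eps$. Since $u$ is quasi-continuous by hypothesis, I would pick an open $V \subset \Pcal$ with $\widetilde{C}_{X,r}(V) < \eps$ such that $u\vert_{\Pcal \setminus V}$ is continuous.

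Next I would carve the open superset out of the naive level set $\{u > 1 - \eta\}$ for a small $\eta\in(0,1)$ to be chosen last. By continuity of $u\vert_{\Pcal \setminus V}$ one has $\{u > 1 - \eta\} \cap (\Pcal \setminus V) = W \cap (\Pcal \setminus V)$ for some open $W \subset \Pcal$; then $G \coloneq W \cup V$ is open, contains $E \subset \{u \ge 1\} \subset \{u > 1-\eta\}\subset G$, and satisfies $G \setminus V \subset \{u > 1-\eta\}$. To estimate $\widetilde{C}_{X,r}(G)$, I would choose a near-optimal admissible pair $(v,h)$ for $V$ with $\chi_V \le v \le 1$, and test with $\tilde u = u/(1-\eta) + v$ and its upper gradient $\tilde g = g/(1-\eta) + h$: on $V$ one has $\tilde u \ge v \ge 1$, and on $W\setminus V$ one has $\tilde u \ge u/(1-\eta) > 1$, so $\tilde u \ge \chi_G$. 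The $r$-norm inequality (see the discussion following \ref{df:qBFL.RF}) then yields
\begin{equation*}
 \widetilde{C}_{X,r}(G)^r \le \|\tilde u\|_X^r + \|\tilde g\|_X^r \le \frac{\widetilde{C}_{X,r}(E)^r + \eps}{(1-\eta)^r} + \widetilde{C}_{X,r}(V)^r + \eps,
\end{equation*}
and since $\widetilde{C}_{X,r}(V)^r < \eps^r$, sending first $\eta \searrow 0$ and then $\eps \searrow 0$ closes the argument. The normed case $C_X = \widetilde{C}_{X,1}$ is handled by the identical construction with the ordinary triangle inequality in place of the $r$-norm property.

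The main obstacle is producing openness while retaining $E \subset G$: the raw set $\{u > 1-\eta\}$ need not be open, and quasi-continuity is the only handle available. The trick is to decouple the testing into an honestly open piece $W$ coming from continuity of $u$ on $\Pcal \setminus V$, plus a small-capacity open ``patch'' $V$ absorbing the discontinuities and compensated additively by the independent admissible function $v$. The rescaling by $1/(1-\eta)$ is what lifts $u$ from being merely $>1-\eta$ on $W\setminus V$ to contributing $\ge 1$ there, so that $\tilde u$ is genuinely admissible for $\widetilde{C}_{X,r}(G)$; without the rescaling, one would only be able to bound $\widetilde{C}_{X,r}$ of a level set rather than of the open superset itself.
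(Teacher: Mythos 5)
Your proof is correct and follows essentially the same strategy as the paper's: pick an almost-optimal $u$ for $E$, use its quasi-continuity to obtain a small-capacity open $V$ off which $u$ is continuous, form the open set from the superlevel set of $u$ relative to $\Pcal\setminus V$ together with $V$, and test the capacity of that open set with $u/(1-\eta)+v$, where $v$ is almost optimal for $V$, invoking the $r$-norm inequality. The paper merges your two small parameters $\eps$ and $\eta$ into one and writes the open set directly as $U\cup V$ with $U=\{x\in\Pcal\setminus V: u(x)>1-\eps\}$ (open in $\Pcal\setminus V$), which equals your $W\cup V$; otherwise the arguments coincide.
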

%
%
\begin{proof}
If $X$ is normed, then $C_X = \widetilde{C}_{X,r}$ with $r=1$. Hence, it suffices to prove that $\widetilde{C}_{X,r}$ is an outer capacity.

If $\widetilde{C}_{X,r}(E) = \infty$, then the claim is trivial. Suppose therefore that $\widetilde{C}_{X,r}(E) < \infty$.
Let $\eps \in (0,1)$ and $u\ge \chi_E$ be such that $\|u\|_X^r + \|g_u\|_X^r < \widetilde{C}_{X,r}(E)^r + \eps$. Due to the quasi-continuity of $u$, there is an open set $V$ with $\widetilde{C}_{X,r}(V) < \eps$ such that $u\vert_{\Pcal\setminus V}$ is continuous. Thus, $U \coloneq \{x\in \Pcal \setminus V: u(x) > 1-\eps\}$ is open in $\Pcal \setminus V$ and $U\cup V$ is open in $\Pcal$. Let $v\ge \chi_V$ be such that $\|v\|_X^r + \|g_v\|_X^r < \eps$. Let
\[
  w = \frac{u}{1-\eps} + v.
\]
Then, $w \ge \chi_{U\cup V} \ge \chi_E$. Consequently,
\begin{align*}
  \widetilde{C}_{X,r}(E)^r & \le \inf_{\substack{G\supset E \\ G\textup{ open}}} \widetilde{C}_{X,r}(G)^r \le \widetilde{C}_{X,r}(U\cup V)^r \le \|w\|_X^r + \|g_w\|_X^r \\
  & \le \frac{\|u\|_X^r + \|g_u\|_X^r}{(1-\eps)^r} + \|v\|_X^r + \|g_v\|_X^r
  < \frac{\widetilde{C}_{X,r}(E)^r + \eps}{(1-\eps)^r} + \eps.
\end{align*}
The last expression tends to $\widetilde{C}_{X,r}(E)^r$ as $\eps\to 0$, which finishes the proof.
\end{proof}
%
%
The following proposition quantifies the difference between a.e.\@ and q.e.\@ equivalence classes in Newtonian spaces. Namely, $\NX$ contains only the ``good'' representatives of the functions that lie in an a.e.\@ equivalence class of a Newtonian function.
\begin{pro}
Assume that $\Pcal$ is locally compact and that continuous functions are dense in $\NX$. Suppose further that $X$ has the Vitali--Carath\'eodory property \eqref{eq:VitaliCaratheodory}. In particular, it suffices to assume that $\chi_B \in X$ for every bounded set $B\subset \Pcal$ and it satisfies \ref{df:AC}. Let $u: \Pcal \to \overline{\Rbb}$ be such that $u=v$ a.e.\@ in $\Pcal$ for some function $v \in \NX$. Then, the following are equivalent:
\begin{enumerate}
	\item \label{it:ae-NX} $u \in \NX$;
	\item \label{it:ae-ACC} $u \circ \gamma$ is absolutely continuous for $\Mod_X$-a.e.\@ curve $\gamma$;
	\item \label{it:ae-wqc}  $u$ is weakly quasi-continuous;
	\item \label{it:ae-qc} $u$ is quasi-continuous.
\end{enumerate}
\end{pro}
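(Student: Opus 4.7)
The plan is to prove the four equivalences cyclically, roughly as $(a)\Rightarrow(d)\Rightarrow(c)\Rightarrow(a)$ for the quasi-continuity branch, together with $(a)\Leftrightarrow(b)$ for the ACC-along-curves branch. The underlying observation is that every hypothesis of Corollary~\ref{cor:newt.fcns-are-qcont} is in force, so every $w\in\NX$ (in particular $v$) is quasi-continuous; this already pins down which representatives $u$ of $v$ can possibly lie in $\NX$, and we merely need to identify them through the three stated criteria.

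First I would dispatch the easy implications. The implication $(a)\Rightarrow(d)$ is precisely Corollary~\ref{cor:newt.fcns-are-qcont} applied to $u$ itself, and $(d)\Rightarrow(c)$ is immediate from the definitions. For $(a)\Rightarrow(b)$ I would appeal to the standard fact (proved in \cite{Mal1}) that any function in $\NX$ is absolutely continuous along $\Mod_X$-a.e.\ curve, via the integrability of any $X$-weak upper gradient along such curves.

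Next comes the core cycle. To close $(c)\Rightarrow(d)$, observe that by Corollary~\ref{cor:newt.fcns-are-qcont} every function in $\NX$ is quasi-continuous, so Proposition~\ref{pro:C_X-outer} applies and yields that $\widetilde{C}_{X,r}$ is an outer capacity; Proposition~\ref{pro:qc=wqc} then upgrades weak quasi-continuity of $u$ to full quasi-continuity. For $(d)\Rightarrow(a)$, I would note that $v\in\NX$ is itself quasi-continuous (again by Corollary~\ref{cor:newt.fcns-are-qcont}), so $u$ and $v$ are both quasi-continuous and coincide a.e.; Proposition~\ref{pro:qc_ae=qe} then promotes this to $u=v$ q.e. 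Since Newtonian quasi-norms and minimal weak upper gradients are invariant under modification on a set of capacity zero (\cite[Corollary~6.16]{Mal1}), this forces $u\in\NX$. Finally, $(b)\Rightarrow(a)$ goes through the curve-wise analog: as $v\in\NX$ is likewise ACC along $\Mod_X$-a.e.\ curve, \cite[Proposition~6.12]{Mal1} applied to the pair $u,v$ (both ACC, coinciding a.e.) upgrades a.e.\ equality to q.e.\ equality, and the same invariance under q.e.\ modification puts $u$ into $\NX$.

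The only subtle step is $(c)\Rightarrow(d)$, where the outerness of $\widetilde{C}_{X,r}$ must be invoked before $u$ is known to lie in $\NX$. The point I want to emphasize in the write-up is that Proposition~\ref{pro:C_X-outer} is a statement about the capacity itself, requiring only that \emph{every} function in $\NX$ be quasi-continuous; once Corollary~\ref{cor:newt.fcns-are-qcont} delivers this (which it does under the standing hypotheses, independently of $u$), the outerness is available as an ambient property and can be used to improve the weak quasi-continuity of the particular function $u$ at hand. No other step requires more than a careful citation of the results collected earlier in the paper.
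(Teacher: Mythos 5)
Your proposal is correct and follows essentially the same route as the paper: the quasi-continuity cycle is closed via Corollary~\ref{cor:newt.fcns-are-qcont}, Proposition~\ref{pro:C_X-outer}, Proposition~\ref{pro:qc=wqc}, and Proposition~\ref{pro:qc_ae=qe}, with the q.e.-modification invariance sealing membership in $\NX$. The only cosmetic difference is that you re-derive the equivalence \ref{it:ae-NX}\,$\Leftrightarrow$\,\ref{it:ae-ACC} from \cite[Proposition~6.12]{Mal1} and the invariance lemma, whereas the paper cites the packaged \cite[Proposition~6.18]{Mal1} directly.
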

\begin{proof}
Proposition~\ref{pro:VitaliCarath} gives \eqref{eq:VitaliCaratheodory} if $\chi_B\in X$ satisfies \ref{df:AC} whenever $B\subset\Pcal$ is bounded.

The equivalence \ref{it:ae-NX}\,$\Leftrightarrow$\,\ref{it:ae-ACC} was established in~\cite[Proposition~6.18]{Mal1}, without any assumptions on $\Pcal$.

The implication \ref{it:ae-NX}\,$\Rightarrow$\,\ref{it:ae-wqc} is shown in Proposition~\ref{pro:newt.fcns-are-wqcont}.

The equivalence \ref{it:ae-wqc}\,$\Leftrightarrow$\,\ref{it:ae-qc} follows by Proposition~\ref{pro:qc=wqc}, whose hypotheses are satisfied due to Corollary~\ref{cor:newt.fcns-are-qcont} and Proposition~\ref{pro:C_X-outer}.

In order to show that \ref{it:ae-qc}\,$\Rightarrow$\,\ref{it:ae-NX}, assume that $u$ is quasi-continuous. By \ref{it:ae-NX}\,$\Rightarrow$\,\ref{it:ae-qc}, we have that $v$ is quasi-continuous as $v\in \NX$. Then, $u=v$ q.e.\@ by Proposition~\ref{pro:qc_ae=qe}. Therefore, $\|u-v\|_\NX = 0$ by~\cite[Proposition~6.15]{Mal1}, which yields that $u\in \NX$.
\end{proof}
%
%
%
%
\section{Density of locally Lipschitz functions}
\label{sec:density}
The aim of this section is to prove that locally Lipschitz functions are dense in $\NX(\Omega)$, whenever $\Omega \subset \Pcal$ is open, provided that (locally) Lipschitz functions are dense in $\NX(\Pcal)$. Note that we will not pose any assumptions on $\Omega$ besides being open. We will however need $\Pcal$ to be proper and $X$ to have absolutely continuous norm that satisfies \ref{df:BFS.finmeasfinnorm}. Here, we generalize the results of~\cite{BjoBjoSha}, where $X$ was just $L^p$.

It has been shown in~\cite[Section~3]{Mal3} that Newtonian functions can be approximated by their truncations if the (quasi)norm of $X$ is absolutely continuous. We can extend this result if all Newtonian functions are quasi-continuous. Namely, Newtonian functions that vanish outside of a measurable set $\Theta$ can be approximated by bounded functions whose support is a bounded subset of $\Theta$. The case $X=L^p$ with an open $\Theta$ was discussed in~\cite{BjoBjoSha}, where the fundamental idea came from~\cite{ShaH}.
%
%
\begin{lem}
\label{lem:quasicont_bdd-bddspt-dense}
Let $X$ be a quasi-Banach function lattice with absolutely continuous quasi-norm.
Assume that all functions in $\NX$ are quasi-continuous, Then, every function in $\NnX(\Theta)$ can be approximated in $\NX$ by bounded functions with bounded support lying in $\Theta$.
\end{lem}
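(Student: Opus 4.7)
The plan is to approximate $u \in \NnX(\Theta)$ by a two-stage reduction: first truncate in level to obtain a bounded approximant, using absolute continuity of $\|\cdot\|_X$; then multiply by a Lipschitz radial cutoff to confine the support to a bounded subset of $\Theta$. Throughout, I identify $u$ with its extension by zero to $\Pcal$, which lies in $\NX$ and vanishes pointwise off $\Theta$.

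For the truncation step, I would set $u_k = \max\{-k,\min\{k,u\}\} \in \NnX(\Theta)$ for $k \in \Nbb$. The pointwise bound $|u-u_k| \le |u|\chi_{\{|u|>k\}}$, together with the truncation identity from the appendix that $g_u\chi_{\{|u|>k\}}$ is an $X$-weak upper gradient of $u - u_k$, lets me invoke absolute continuity of $\|\cdot\|_X$ on the decreasing sets $\{|u|>k\} \downto \emptyset$ (modulo a $\mu$-null set, as $|u|<\infty$ a.e.) to conclude $u_k \to u$ in $\NX$.

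For the cutoff step, I fix $x_0 \in \Pcal$ and let $\eta_R \in \Lip(\Pcal)$ be the $1/R$-Lipschitz cutoff that is $1$ on $B(x_0,R)$ and $0$ off $B(x_0,2R)$. If $v \in \NnX(\Theta)$ is bounded, then $v\eta_R$ is bounded and has $\{v\eta_R \neq 0\} \subset \Theta \cap B(x_0,2R)$, a bounded subset of $\Theta$. The Leibniz-type product rule from the appendix gives a weak upper gradient of $v - v\eta_R = v(1-\eta_R)$ dominated by $(1-\eta_R)g_v + R^{-1}|v|\chi_{B(x_0,2R)\setminus B(x_0,R)}$; the first term vanishes in $X$-quasi-norm by absolute continuity on the shells $\Pcal \setminus B(x_0,R) \downto \emptyset$, and the second is controlled by $\|v\|_X/R \to 0$. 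Together with $|v - v\eta_R| \le |v|\chi_{\Pcal \setminus B(x_0,R)} \to 0$ in $X$, this yields $v\eta_R \to v$ in $\NX$ as $R \to \infty$.

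Given $\eps > 0$, I would choose $k$ so that $\|u - u_k\|_\NX < \eps/(2\cconc)$ and then $R$ so that $\|u_k - u_k\eta_R\|_\NX < \eps/(2\cconc)$; the quasi-triangle inequality then gives $\|u - u_k\eta_R\|_\NX < \eps$, with $u_k\eta_R$ bounded and $\{u_k\eta_R \neq 0\}$ a bounded subset of $\Theta$. The main technical obstacle is the weak-upper-gradient bookkeeping at each stage; these are the standard product and truncation identities already compiled in the appendix. The quasi-continuity hypothesis plays no direct role in this pointwise construction, but it is recorded because it will be indispensable in Section~\ref{sec:density}: there, a further multiplicative modification by a Newtonian cutoff of the form $1 - \min\{1,\psi\}$, with $\psi$ controlling a capacity-small open set covering the discontinuities of $u$, will be used to push the approximant's support (viewed as a closed set) strictly inside an open $\Theta$.
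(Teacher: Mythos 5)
Your proposal correctly carries out the first two reductions — level truncation via absolute continuity, then a Lipschitz radial cutoff — and these match the paper's first two steps in substance (the paper cites its Corollary~3.4 for truncation and uses the $1$-Lipschitz $\psi_n(x)=(1-\dist(x,B(x_0,n)))^+$ instead of your $1/R$-Lipschitz family, but both work). However, there is a genuine gap in the conclusion. After your two steps you only have a bounded $f$ with $\{f\neq 0\}$ bounded and contained in $\Theta$. The lemma asserts that the \emph{support} — the closure of $\{f\neq 0\}$ — is a bounded subset of $\Theta$, and this is strictly more when $\Theta$ is not closed. That stronger conclusion is exactly what Proposition~\ref{pro:Lipc-dense-NX0} later relies on: there one sets $\delta=\min\{1,\dist(\spt u,\Pcal\setminus\Omega)\}$ and needs $\delta>0$, which requires $\spt u$ (a compact set in the proper space) to sit inside the open set $\Omega$, not merely $\{u\neq0\}\subset\Omega$.

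You explicitly acknowledge that quasi-continuity ``plays no direct role in this pointwise construction'' and will only become ``indispensable'' later — but that deferral is the error: the quasi-continuity step belongs in the proof of \emph{this} lemma and is what upgrades $\{f\neq0\}\subset\Theta$ to $\spt f\subset\Theta$. The paper's third step proceeds as you sketch in your final paragraph, just earlier than you expect: pick open $U_k\supset$ (the discontinuity set) with $C_X(U_k)\to0$ and $u|_{\Pcal\setminus U_k}$ continuous; choose Newtonian cutoffs $w_k\in\NX$ with $\chi_{U_k}\le w_k\le1$ and $\|w_k\|_\NX\to0$; take a threshold map $\eta_k$ that annihilates $|u|<1/k$; and set $u_k=(1-w_k)(\eta_k\circ u)$. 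Then $G_k=U_k\cup\{x\in\Pcal\setminus U_k:|u(x)|<1/k\}$ is open (continuity of $u$ on $\Pcal\setminus U_k$ makes that second piece relatively open, and the union with $U_k$ is globally open), so $\Pcal\setminus G_k$ is closed and contained in $\Theta$; since $u_k$ vanishes on $G_k$ and inherits the bounded support from step two, $\spt u_k$ is a bounded closed subset of $\Theta$. The absolute continuity of $\|\cdot\|_X$ and $\|w_k\|_\NX\to0$ drive $u_k\to u$ in $\NX$. Note also that this step needs no openness of $\Theta$; it works for any measurable $\Theta$, as the lemma is stated. Without this third step your argument proves a weaker statement than the lemma claims, and the downstream proof of Proposition~\ref{pro:Lipc-dense-NX0} would fail.
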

%
%
\begin{proof}
Let $u\in\NnX(\Theta)$ with an $X$-weak upper gradient $g\in X$. It has been shown in~\cite[Corollary~3.4]{Mal3} that the truncations of $u$ get arbitrarily close to $u$ in $\NX$. Therefore, we may assume that $u$ is bounded.

Next, we will show that we may assume that $u$ has bounded support. Let us fix $x_0\in \Pcal$ and write $\psi_n(x) = (1-\dist(x, B(x_0, n)))^+$ for $n\in\Nbb$. We want to prove that $u\psi_n \to u$ in $\NX$ as $n\to \infty$. Since $\tilde{g} = \chi_{A(x_0, n, n+1)}$ is an upper gradient of $1-\psi_n(x)=\min\{1, \dist(x, B(x_0, n))\}$, where $A(x_0, n, n+1)$ is the closed annulus $\{x\in\Pcal: n\le \dd(x, x_0) \le n+1\}$, the product rule (Theorem~\ref{thm:product_rule}) yields that the function $g_n = (1-\psi_n)g + \chi_{A(x_0, n, n+1)}u$ is an $X$-weak upper gradient of $(1-\psi_n)u$. Moreover, $g_n \le (u + g) \chi_{\Pcal \setminus B(x_0, n)}$.
Hence,
\begin{align*}
  \| u - u\psi_n \|_\NX & \le \|u-u\psi_n\|_X + \|g_n\|_X \\
  & \le \| u \chi_{\Pcal \setminus B(x_0, n)}\|_X + \|(u+g)\chi_{\Pcal \setminus B(x_0, n)}\|_X \to 0 \quad \mbox{as $n\to \infty$}
\end{align*}
due to the absolute continuity of the quasi-norm of $X$. Therefore, we do not lose any generality if we suppose that $u$ has bounded support.

Since $u$ is quasi-continuous, there are open sets $U_k\subset \Pcal$, $k\in\Nbb$, such that $u\vert_{\Pcal\setminus U_k}$ is continuous while $C_X(U_k) \to 0$ as $k\to\infty$. Thus, there exist functions $w_k\in \NX$ such that $\chi_{U_k} \le w_k \le 1$ and $\|w_k\|_\NX \to 0$ as $k\to\infty$. 
By~\cite[Corollary~7.2]{Mal1}, we may assume that $w_k \to 0$ q.e., passing to a subsequence if necessary. The sets $G_k \coloneq U_k \cup \{x\in \Pcal \setminus U_k: u(x)<1/k\}$ are open in $\Pcal$, whence $\Pcal \setminus G_k \subset \Theta$ is closed. Let
\[
  \eta_k(t) =
  \begin{cases}
    0& \mbox{for }|t|<1/k,\\
    2(|t| - 1/k)\sgn t & \mbox{for }1/k \le |t| \le 2/k,\\
    t& \mbox{for }|t|>2/k.
  \end{cases}
\]
By the chain rule (Theorem~\ref{thm:chain_rule}), we obtain that $g_{\eta_k\circ u - u} = g_u \chi_{\{0<|u|<2/k\}}$ a.e.\@ as the function $t \mapsto \eta_k(t) - t$ is $1$-Lipschitz and supported in $[-2/k, 2/k]$. The absolute continuity of the norm of $X$ now yields
\[
  \| \eta_k \circ u - u \|_\NX \le \| u \chi_{\{0<|u| < 2/k\}}\|_X + \|g_u \chi_{\{0<|u| < 2/k\}}\|_X \to 0\quad\mbox{as }k\to\infty
\]
since $\bigcap_{k=1}^\infty \{x\in\Pcal: 0<|u(x)| < 2/k\}= \emptyset$. As $\eta_k$ is $2$-Lipschitz, we can estimate $g_{\eta_k \circ u} \le 2 g_u$ a.e. Let $u_k = (1-w_k) (\eta_k\circ u)$. Then, $u_k$ is supported within $\Pcal \setminus G_k$ and the product rule gives that
\begin{align*}
  \| u_k - \eta_k \circ u\|_\NX & = \|(\eta_k \circ u) w_k\|_\NX \\
  & \le \|(\eta_k \circ u) w_k\|_X + \| (\eta_k \circ u) g_{w_k}+ g_{\eta_k \circ u} w_k\|_X \\
  & \le \|\eta_k \circ u\|_{L^\infty} (\|w_k\|_X + \cconc\|g_{w_k}\|_X) + 2\cconc\|g_{u} w_k\|_X. 
\end{align*}
For an arbitrary $\eps > 0$, we obtain that  $\|g_{u} w_k\|_X \le \cconc (\eps \|g_u\|_X + \|g_u \chi_{E_k(\eps)}\|_X)$, where $E_k(\eps) = \{x\in \Pcal: w_k(x) > \eps\}$. Then, $\limsup_{k\to \infty} \|g_{u} w_k\|_X \le \cconc \eps \|g_u\|_X$ by the absolute continuity of the quasi-norm of $X$ since $\meas{\bigcap_{k=1}^\infty E_k(\eps)} = 0$. Letting $\eps \to 0$, we see that $\|g_u w_k\|_X \to 0$ as $k\to \infty$. 
The choice of $w_k$ ensures that $\|w_k\|_X + \|g_{w_k}\|_X = \|w_k\|_\NX \to 0$ as $k\to \infty$. Thus, $\| u_k - \eta_k \circ u\|_\NX \to 0$ as $k\to \infty$.
\end{proof}
%
%
We can go even further if $\Pcal$ is a proper metric measure space. If $\Omega \subset \Pcal$ is open and if locally Lipschitz functions are dense in $\NX$, then the Newtonian functions that vanish outside of $\Omega$ can be approximated (in the norm of $\NX$) by Lipschitz functions that are compactly supported within $\Omega$.
%
%
\begin{pro}
\label{pro:Lipc-dense-NX0}
Suppose that $\Pcal$ is proper and $\Omega \subset \Pcal$ is open. Assume that $X$ is a quasi-Banach function lattice that has absolutely continuous quasi-norm and satisfies~\ref{df:BFS.finmeasfinnorm}.
If locally Lipschitz functions are dense in $\NX$, then $\overline{\Lipc(\Omega)}=\NnX(\Omega)$.
\end{pro}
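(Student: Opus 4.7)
The plan is to prove the non-trivial inclusion $\NnX(\Omega)\subset\overline{\Lipc(\Omega)}$; the reverse inclusion is immediate from \ref{df:BFS.finmeasfinnorm} (which makes bounded compactly supported Lipschitz functions members of $\NX$) together with the closedness of $\NnX(\Omega)$ in $\NX$. My strategy is to first reduce, via Lemma~\ref{lem:quasicont_bdd-bddspt-dense}, to bounded $u\in\NnX(\Omega)$ with compact support inside $\Omega$, and then to localize a globally available locally Lipschitz approximation by means of a Lipschitz cutoff.

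To apply Lemma~\ref{lem:quasicont_bdd-bddspt-dense}, I first need every Newtonian function to be quasi-continuous. Since $X$ satisfies \ref{df:BFS.finmeasfinnorm} and has absolutely continuous quasi-norm, Proposition~\ref{pro:VitaliCarath} yields the Vitali--Carath\'eodory property for $X$. The space $\Pcal$ is proper, hence locally compact, and the density hypothesis gives that continuous functions are dense in $\NX$, so Corollary~\ref{cor:newt.fcns-are-qcont} provides quasi-continuity of every $u\in\NX$. Given $u\in\NnX(\Omega)$, Lemma~\ref{lem:quasicont_bdd-bddspt-dense} then produces $\NX$-approximants whose closed, bounded supports lie in $\Omega$; by properness these supports are compact. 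Thus it suffices to approximate such a $u$ whose support $K$ is a compact subset of $\Omega$.

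Assuming $\Omega\ne\Pcal$ (if $\Omega=\Pcal$, the claim follows from the density hypothesis combined with the truncations used in Lemma~\ref{lem:quasicont_bdd-bddspt-dense}), one has $\delta\coloneq \dist(K,\Pcal\setminus\Omega)>0$, so
\[
  \eta(x) = \max\bigl\{0,\;1-2\dist(x,K)/\delta\bigr\}
\]
is $(2/\delta)$-Lipschitz, identically $1$ on $K$, satisfies $0\le\eta\le 1$, and is supported in $\{x:\dist(x,K)\le\delta/2\}\subset\Omega$, which is compact by properness. Using the density hypothesis, pick locally Lipschitz $u_k\in\NX$ with $u_k\to u$ in $\NX$ and set $v_k\coloneq \eta u_k$. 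A standard covering argument shows that $u_k$ is bounded and Lipschitz on the compact set $\spt\eta$, so $v_k$ is Lipschitz on $\Pcal$ with compact support in $\Omega$, i.e., $v_k\in\Lipc(\Omega)$.

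It remains to verify $v_k\to u$ in $\NX$. Since $\eta\equiv 1$ on $K=\spt u$, we have $u=\eta u$ on all of $\Pcal$, whence $v_k-u=\eta(u_k-u)$. The lattice property gives $\|v_k-u\|_X\le\|u_k-u\|_X\to 0$, while the product rule (Theorem~\ref{thm:product_rule}) exhibits $\eta\, g_{u_k-u}+(2/\delta)|u_k-u|$ as an $X$-weak upper gradient of $v_k-u$, so
\[
  \|g_{v_k-u}\|_X\le\cconc\bigl(\|g_{u_k-u}\|_X+(2/\delta)\|u_k-u\|_X\bigr)\to 0.
\]
The main obstacle, I expect, is the reduction step: one must carefully check that Lemma~\ref{lem:quasicont_bdd-bddspt-dense} actually furnishes approximants whose \emph{closed} support lies in $\Omega$ (so that $K$ is a genuine compact subset of $\Omega$ and the cutoff $\eta$ really vanishes outside $\Omega$); this requires inspection of the proof of that lemma rather than just its statement.
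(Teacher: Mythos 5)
Your proof is correct and follows the same overall strategy as the paper: reduce via Corollary~\ref{cor:newt.fcns-are-qcont} and Lemma~\ref{lem:quasicont_bdd-bddspt-dense} to $u$ bounded with compact support $K \subset \Omega$, approximate $u$ by locally Lipschitz functions globally, and localize by the cutoff $\eta$. The one place where you genuinely streamline the argument is the final estimate: you exploit the identity $u = \eta u$ (valid because $\eta \equiv 1$ on $\spt u$ and $u = 0$ off $\spt u$) to get the clean telescoping $v_k - u = \eta(u_k - u)$, and then the product rule alone gives the $\NX$-convergence. The paper instead fixes a single approximant $v$, estimates $\|v - v\eta\|_{\NX}$, and must invoke Corollary~\ref{cor:u=v_gu=gv} twice (once to kill $g_{v(1-\eta)}$ on $\spt u$, once to replace $g_v$ by $g_{u-v}$ off $\spt u$) before applying the triangle inequality. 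Your identity bypasses both applications of that corollary, which is a nice economy. The rest of the structure (using Proposition~\ref{pro:VitaliCarath} to get the Vitali--Carath\'eodory property, hence quasi-continuity, hence the reduction lemma; noting that bounded support in a proper space means compact support) matches the paper.

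Two small points worth tightening. First, you state that the reverse inclusion $\overline{\Lipc(\Omega)} \subset \NnX(\Omega)$ follows from \ref{df:BFS.finmeasfinnorm} ``together with the closedness of $\NnX(\Omega)$ in $\NX$,'' but you do not verify that closedness. The paper proves it directly: take a Cauchy sequence in $\NnX(\Omega)$, pass to its $\NX$-limit $u$ by completeness, extract a subsequence converging pointwise q.e.\ (by~\cite[Corollary~7.2]{Mal1}), deduce $u = 0$ q.e.\ off $\Omega$, and replace $u$ by $u\chi_\Omega$, which differs from $u$ only on a set of zero capacity and hence has the same $\NX$-quasi-norm. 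This step is short but should be included. Second, your $\delta = \dist(K, \Pcal \setminus \Omega)$ is $+\infty$ when $\Omega = \Pcal$, forcing the case distinction you gesture at; the paper avoids this by setting $\delta = \min\{1, \dist(\spt u, \Pcal \setminus \Omega)\}$, which handles both cases uniformly and also ensures $\spt \eta$ stays bounded (and hence compact) without additional comment. Your worry about whether the reduction lemma really yields \emph{closed} support inside $\Omega$ is well-founded, but it does work out: in the proof of Lemma~\ref{lem:quasicont_bdd-bddspt-dense} the approximants are supported in the closed sets $\Pcal \setminus G_k \subset \Theta$, and being also of bounded support, their closed supports are compact subsets of $\Theta = \Omega$ once $\Pcal$ is proper.
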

%
%
\begin{proof}
Since bounded functions with bounded support are contained in $X$ by \ref{df:BFS.finmeasfinnorm}, we immediately obtain that $\overline{\Lipc(\Omega)} \subset \itoverline{\NnX(\Omega)}$. Now, we will show that $\NnX(\Omega)$ is a closed subset of $\NX$. Let $\{u_k\}_{k=1}^\infty$ be a Cauchy sequence in $\NnX(\Omega)$. Then, there exists $u \in \NX$ such that $\| u_k - u\|_\NX$ as $k\to \infty$ since $\NX$ is complete by~\cite[Theorem~7.1]{Mal1}. By passing to a subsequence if needed, we have that $u_k \to u$ pointwise q.e.\@ in $\Pcal$ by~\cite[Corollary~7.2]{Mal1}. Hence, $u=0$ q.e.\@ in $\Pcal \setminus \Omega$. Let $\tilde{u} = u \chi_\Omega$. Then, $\tilde{u} = u$ q.e.\@ in $\Pcal$. Therefore, $\|\tilde{u} - u\|_\NX = 0$ by~\cite[Proposition 6.15]{Mal1} and hence $\| u_k - \tilde{u}\|_\NX \to 0$ as $k\to \infty$, where $\tilde{u} \in \NnX$. Consequently, $\NnX(\Omega) = \itoverline{\NnX(\Omega)}$.

Let now $u\in \NnX(\Omega)$, $u\nequiv 0$. Due to Corollary~\ref{cor:newt.fcns-are-qcont} and Lemma~\ref{lem:quasicont_bdd-bddspt-dense}, we may assume that $u$ is bounded and has bounded support within $\Omega$. In fact, $\spt u$ is compact since $\Pcal$ is proper. Let $\eps > 0$ be arbitrary. There exists a locally Lipschitz function $v \in \NX$ such that $\|u-v\|_\NX < \eps$.
Next, we define $\eta(x) = (1-2 \dist(x, \spt u)/\delta)^+$, where $\delta = \min\{1, \dist(\spt u, \Pcal\setminus\Omega)\}$. The support of $\eta$ is compact in $\Omega$, and $g_\eta \le 2/\delta$ since $\eta$ is $2/\delta$-Lipschitz. Moreover, $\chi_{\spt u} \le \eta \le 1$. Consequently, $v\eta \in \Lipc(\Omega)$ and the product rule (Theorem~\ref{thm:product_rule}) gives $g_{v(1-\eta)} \le |v| g_\eta + g_v$. Since $1-\eta = 0$ on $\spt u$, Corollary~\ref{cor:u=v_gu=gv} yields that $g_{v(1-\eta)} = 0$ a.e.\@ on $\spt u$. Corollary~\ref{cor:u=v_gu=gv} further implies that $g_{u-v} = g_{-v}=g_v$ a.e.\@ outside of $\spt u$ because $u-v=-v$ there. Thus,
\begin{align*}
  \|v-v\eta\|_\NX & \le \|v \chi_{\Pcal\setminus \spt u}\|_X + \|(|v| g_\eta + g_v) \chi_{\Pcal\setminus \spt u}\|_X \\
  & \le \|v \chi_{\Pcal\setminus \spt u}\|_X + \cconc (\|v \chi_{\Pcal\setminus \spt u}\|_X \|g_\eta\|_{L^\infty} + \|g_v \chi_{\Pcal\setminus \spt u}\|_X) \\
  & \le \biggl(1+ \frac{2\cconc}{\delta}\biggr) \|v \chi_{\Pcal\setminus \spt u}\|_X + \cconc\|g_v \chi_{\Pcal\setminus \spt u}\|_X \\
  & \le \biggl(1+ \frac{2\cconc}{\delta}\biggr) \bigl(\| (u-v) \chi_{\Pcal\setminus \spt u}\|_X + \|g_{u-v} \chi_{\Pcal\setminus \spt u}\|_X\bigr)\,.
\end{align*}
Therefore,
\[
  \|v-v\eta\|_\NX \le \biggl(1+ \frac{2\cconc}{\delta}\biggr)\|u-v\|_\NX < \biggl(1+ \frac{2\cconc}{\delta}\biggr) \eps.
\]
The triangle inequality in $\NX$ now yields that
\[
  \|u-v\eta\|_\NX \le \cconc (\|u-v\|_\NX + \|v-v\eta\|_\NX) < 2\cconc \biggl(1+ \frac{\cconc}{\delta}\biggr) \eps,
\]
completing the proof of the inclusion $\NnX(\Omega) \subset \overline{\Lipc(\Omega)}$.
\end{proof}
%
%
Finally, if we consider the space of Newtonian functions on an open subset $\Omega$ of a proper metric space $\Pcal$, then the density of locally Lipschitz functions in $\NX(\Pcal)$, implies the density in $\NX(\Omega)$. What makes this claim interesting is that we do not impose any other conditions on $\Omega$. In particular, it has been shown earlier that Lipschitz functions are dense in $\NX(\Pcal)$ if $\Pcal$ supports a $p$-Poincar\'{e} inequality and the maximal operator $M_p$ has certain bounds, but here we do not assume that $\Omega$ (as a metric subspace of $\Pcal$) is a $p$-Poincar\'{e} space nor that $\mu|_\Omega$ is doubling. On the other hand, we merely obtain density of locally Lipschitz functions, which is however not unexpected in view of~\cite[Examples~5.8--5.11]{BjoBjo}.
\begin{exa}[{\cite[Example 5.4]{BjoBjoSha}}]
Let $\Pcal$ be the slit disc $B(0,1) \setminus (-1, 0] \subset \Cbb = \Rbb^2$. Then, $f(z) = \max\{0, 2|z|-1\} \arg z$ belongs to $N^{1,p}(\Pcal) \setminus \overline{\Lip(\Pcal)}$ for all $p\ge 1$.
\end{exa}
%
%
\begin{thm}
\label{thm:locLip-dense}
Under the hypotheses of Proposition~\ref{pro:Lipc-dense-NX0}, locally Lipschitz functions are dense in $\NX(\Omega)$.
\end{thm}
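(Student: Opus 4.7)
The plan is to localize $u\in\NX(\Omega)$ by a Lipschitz partition of unity whose pieces have compact support inside $\Omega$, approximate each piece by a compactly supported Lipschitz function via Proposition~\ref{pro:Lipc-dense-NX0} applied on a suitable open subset of $\Omega$, and finally sum the approximations. The starting point will be the $r$-norm on $\NX$ supplied by Aoki--Rolewicz so that $\|\cdot\|_\NX^r$ is genuinely subadditive; this is crucial for summing countably many small errors.

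First, I would exhaust $\Omega$ by bounded open sets with compact closure in $\Omega$: fix $x_0\in\Omega$ and put $V_j=\{x\in B(x_0,j):\dist(x,\Pcal\setminus\Omega)>1/j\}$, $V_0=V_{-1}=\emptyset$. Properness of $\Pcal$ ensures $\itoverline{V_j}\Subset V_{j+1}\subset\Omega$. The open sets $W_j\coloneq V_{j+1}\setminus\itoverline{V_{j-1}}$ form a cover of $\Omega$ and each point of $\Omega$ lies in at most two $W_j$'s. Using the standard ratio-of-distances construction, I would obtain a Lipschitz partition of unity $\{\phi_j\}$ subordinate to $\{W_j\}$: each $\phi_j$ is $L_j$-Lipschitz for some $L_j<\infty$, $0\le\phi_j\le 1$, $\spt\phi_j\subset W_j$, and $\sum_j\phi_j\equiv 1$ on $\Omega$.

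Next, given $u\in\NX(\Omega)$ and $\eps>0$, consider $u\phi_j$, extended by zero to $\Pcal$. The product rule (Theorem~\ref{thm:product_rule}) yields $g_{u\phi_j}\le\phi_j g_u+L_j|u|\chi_{W_j}$, which lies in $X$, so $u\phi_j\in\NnX(W_j)$. Since $W_j$ is an open subset of the proper space $\Pcal$ and all remaining hypotheses on $X$ are preserved, Proposition~\ref{pro:Lipc-dense-NX0} applied on $W_j$ produces $v_j\in\Lipc(W_j)$ with
\[
  \|u\phi_j-v_j\|_\NX^r<\frac{\eps^r}{2^j}\,,
\]
where $r\in(0,1]$ is the Aoki--Rolewicz exponent. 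Extending each $v_j$ by zero gives $v_j\in\Lipc(\Omega)$.

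Finally, set $v\coloneq\sum_{j=1}^\infty v_j$ on $\Omega$. Since the cover $\{W_j\}$ is locally finite, the sum reduces to a finite sum on any set $U\Subset\Omega$, so $v$ is Lipschitz on $U$ and hence locally Lipschitz on $\Omega$. Because $\sum_j\phi_j\equiv 1$ on $\Omega$, we have $u-v=\sum_j(u\phi_j-v_j)$ pointwise on $\Omega$ with only finitely many nonzero terms at each point; the Riesz--Fischer property in the $r$-norm then gives
\[
  \|u-v\|_\NX^r\le\sum_{j=1}^\infty\|u\phi_j-v_j\|_\NX^r<\eps^r\sum_{j=1}^\infty 2^{-j}=\eps^r,
\]
so $u$ is approximated in $\NX(\Omega)$ by the locally Lipschitz function $v$. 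The main technical obstacle is verifying that $u\phi_j\in\NnX(W_j)$ and that the partition of unity can be chosen Lipschitz with locally finite support, which are both standard in proper metric spaces; the rest is bookkeeping in the $r$-norm.
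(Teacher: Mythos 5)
Your proposal is correct and follows the same blueprint as the paper: exhaust $\Omega$ by an increasing chain of relatively compact open sets, localize $u$ to the resulting annular cover, approximate each localized piece by a function in $\Lipc$ via Proposition~\ref{pro:Lipc-dense-NX0}, and resum. The two places where you differ are cosmetic rather than structural. First, you build an explicit Lipschitz partition of unity $\{\phi_j\}$ subordinate to the annuli and write $u=\sum_j u\phi_j$, whereas the paper generates the same type of decomposition implicitly by telescoping, setting $u_j=\bigl(u-\sum_{k<j}u_k\bigr)\eta_j$ with cutoffs $\eta_j\in\Lipc(\Omega_{j+1})$; both yield a locally finite sum with each point seeing at most two or three nonzero terms. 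Second, for the error budget you invoke the $r$-norm from Aoki--Rolewicz and aim for $\sum_j\|u\phi_j-v_j\|_\NX^r<\eps^r$, whereas the paper applies the $\cconc^j$-weighted form of the quasi-triangle (Riesz--Fischer) inequality and asks for $\|u_j-v_j\|_\NX\le(2\cconc)^{-j}\eps$; these are interchangeable, since when $\|\cdot\|_X$ is an $r$-norm, the functional $\|u\|_X+\inf_g\|g\|_X$ inherits $r$-subadditivity (the reverse Minkowski inequality for the $\ell^r$ pair of the function term and the gradient term gives exactly this), so your estimate is sound. One small point worth making precise in a final write-up is the assertion that $u\phi_j$, extended by zero to $\Pcal$, lies in $\NnX(W_j)$: this rests on $\spt\phi_j\Subset W_j\subset\Omega$ so that curves leaving $\Omega$ do not see $u\phi_j$, which the paper also uses without elaboration for its $u_j$.
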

\begin{proof}
Let $u\in\NX(\Omega)$ and let $\eps>0$ be arbitrary. Since $\Pcal$ is proper, we may find an increasing sequence of open sets $\emptyset = \Omega_0 \neq \Omega_1 \Subset \Omega_2 \Subset \cdots \Subset \Omega$ so that $\Omega = \bigcup_{j=1}^\infty \Omega_j$. For each $j=1,2,\ldots$, choose $\eta_j \in \Lipc(\Omega_{j+1})$ such that $\chi_{\Omega_j} \le \eta_j \le 1$. Then, define $u_j = (u-\sum_{k=1}^{j-1} u_k) \eta_j$, which gives that $u_j \in \NnX(\Omega_{j+1} \setminus \overline{\Omega_{j-1}})$. We also obtain that $u = \sum_{j=1}^\infty u_j$ everywhere in $\Omega$.

By Proposition~\ref{pro:Lipc-dense-NX0}, there exists $v_j \in \Lipc(\Omega_{j+1} \setminus \overline{\Omega_{j-1}})$ such that $\|u_j - v_j\|_\NX \le (2\cconc)^{-j} \eps$ for every $j=1,2,\ldots$. Let $v=\sum_{j=1}^\infty v_j$. For every $x\in\Omega$, there is a neighborhood $U\ni x$ such that at most three terms in this sum are non-zero in $U$, whence $v$ is locally Lipschitz in $\Omega$. The triangle inequality now yields that
\[
  \|u-v\|_\NX(\Omega) \le \sum_{j=1}^\infty \cconc^j \|u_j - v_j\|_\NX \le \eps.
  \qedhere
\]
\end{proof}
%
%
%
%
\section{Boundedness of Newtonian functions}
\label{sec:boundedness}
In the setting of Sobolev spaces $W^{1,p}(\Rbb^n)$, it is well known that Sobolev functions are essentialy bounded and have continuous representatives if $p>n$. A finer distinction of function spaces is however needed for $p=n$, e.g., certain Zygmund or Lorentz norms can be used to ensure the boundedness. A similar result can be obtained for Newtonian functions if we introduce the notion of dimension of a doubling measure $\mu$. Therefore, we will assume that $\mu$ satisfies the \emph{doubling condition} in this (as well as in the next) section.

In $\Rbb^n$ with the Lebesgue measure $\lambda^n$, we have $\lambda^n(2B) = 2^n \lambda^n(B)$ for every ball $B\subset \Rbb^n$. The doubling condition gives $\mu(2B) \le c_\dbl \mu(B) = 2^{\log_2 c_\dbl} \mu(B)$. Even though $\log_2 c_\dbl$ can play the role of the dimension, it need not be sharp for the results about boundedness and continuity of Newtonian functions. It can be easily shown (see, e.g.,~\cite[Lemma 3.3]{BjoBjo}) that for every $s \ge \log_2 c_\dbl$ there is $c_s > 0$ such that
\begin{equation}
  \label{eq:dimension}
  \frac{\meas{B(y,r)}}{\meas{B(x,R)}} \ge c_s \biggl( \frac rR \biggr)^s
\end{equation}
whenever $0<r\le R$, $x\in \Pcal$, and $y \in B(x,R)$. Considering a simple example of weighted $\Rbb^n$ with a non-constant weight $w\in L^\infty(\Rbb^n)$ such that $1/w \in L^\infty(\Rbb^n)$, we see that \eqref{eq:dimension} holds with $s=n < \log_2 c_\dbl$ and $c_s = 1/\|w\|_\infty \|1/w\|_\infty$. Therefore, the dimension will be replaced by $s \le \log_2 c_\dbl$, preferably as small as possible, such that \eqref{eq:dimension} is satisfied. Note however that the set of admissible exponents $s$ may be open, see e.g.\@ Bj\"orn, Bj\"orn and Lehrb\"ack~\cite[Example~3.1]{BjoBjoLeh}. It is insignificant for the notion of dimension whether we require that \eqref{eq:dimension} holds for all $y \in B(x,R)$ or only for $y=x$ (i.e., only for concentric balls) since
\[
  c_\dbl^{-1} \frac{\meas{B(y,r)}}{\meas{B(y,R)}} \le \frac{\meas{B(y,r)}}{\meas{B(x,R)}} = \frac{\meas{B(y,r)}}{\meas{B(y,R)}} \frac{\meas{B(y,R)}}{\meas{B(x,R)}} \le c_\dbl \frac{\meas{B(y,r)}}{\meas{B(y,R)}}.
\]

If $\Pcal$ is connected, then there are $c_\sigma > 0$ and $0<\sigma \le s$ such that
\begin{equation}
   \label{eq:dimension2}
  \frac{\mu(B(y,r))}{\mu(B(x,R))} \le c_\sigma \biggl( \frac{r}{R}\biggr)^{\sigma}
\end{equation}
whenever $0<r\le R < 2\diam \Pcal$, $x\in \Pcal$, and $y \in B(x,R)$, see~\cite[Corollary~3.8]{BjoBjo}. Similarly as above, if \eqref{eq:dimension2} holds with some $\sigma$, then it holds with all $\sigma'\le \sigma$. The set of admissible exponents in \eqref{eq:dimension2} may be open, see~\cite[Example~3.1]{BjoBjoLeh}. Moreover, it may happen that $\sigma < s$ even if both $\sigma$ and $s$ are the best possible exponents (provided that these exist). The metric measure space is called \emph{Ahlfors $Q$-regular} if both \eqref{eq:dimension} and \eqref{eq:dimension2} are satisfied with $\sigma = s \eqcolon Q$. However, the Ahlfors regularity is a very restrictive condition that fails even in weighted $\Rbb^n$, unless the weight is bounded away both from zero and from infinity, see e.g.\@~\cite[Example 3.5]{BjoBjo}.

We will show that all Newtonian functions are locally essentially bounded (and have continuous representatives, which will be shown in the next section) provided that the function lattice $X$ is continuously embedded into $L^p_\loc$ for some $p>s$ or into $L^s(\log L)^{1+\eps}_\loc$ in the borderline case $p=s\ge 1$, where $s$ is the ``dimension of the measure'' given by \eqref{eq:dimension}, under the assumption that $\Pcal$ supports a $p$-Poincar\'e inequality. If a slightly stronger Poincar\'e inequality is assumed, then we will show that the embedding $X\emb L^{s,1}_\loc$ suffices to obtain local essential boundedness of functions in $\NX$ (and hence so does $X \emb L^s(\log L)^{1-1/s}$).
\begin{df}
\label{df:PI}
We say that $\Pcal$ supports a \emph{$p$-Poincar\'e inequality} with $p\in [1, \infty)$ if there exist constants $c_{\PI} > 0$ and $\lambda \ge 1$ such that
\begin{equation}
  \label{eq:PI}
  \fint_B |u-u_B|\,d\mu \le c_{\PI} \diam (B) \Biggl( \fint_{\lambda B} g^p\,d\mu\Biggr)^{1/p}
\end{equation}
for all balls $B\subset \Pcal$, for all $u\in L^1_\loc(\Pcal)$ and all upper gradients $g$ of $u$.
\end{df}
This inequality is sometimes called a \emph{weak $p$-Poincar\'e inequality} since we allow for $\lambda > 1$. Moreover, $\Pcal$ supports a $p$-Poincar\'e inequality if and only if \eqref{eq:PI} holds for all measurable functions $u$ and all $p$-weak upper gradients $g$ of $u$, where the left-hand side is interpreted as $\infty$ whenever $u_B$ is not defined or $u_B = \pm \infty$. We can also equivalently require that \eqref{eq:PI} holds for all $u\in L^\infty(\Pcal)$ and all ($p$-weak) upper gradients $g$ of $u$. These characterizations were shown in~\cite[Proposition 4.13]{BjoBjo}. If $X \emb L^p_\loc$, i.e., if $\|f \chi_B\|_{L^p} \le \cemb(B) \|f\chi_B\|_X$ for all balls $B\subset \Pcal$ and $f\in X$, then we may also require validity of the inequality for all $X$-weak upper gradients $g$ of $u$, which follows by~\cite[Lemma~5.6]{Mal1}.

If $\Pcal$ supports a $p$-Poincar\'e inequality for some $p\in [1, \infty)$, then it also supports a $q$-Poincar\'e inequality whenever $q \in [p, \infty)$ due to the H\"older inequality. It also follows that $\Pcal$ is connected (see, e.g., Shanmugalingam~\cite[p.~25]{ShaPhD}), whence $\mu$, being doubling, is non-atomic.

Both Zygmund and Lorentz spaces, which have been mentioned earlier, belong to a wide class of function spaces, the so-called \emph{\ri spaces}, i.e., Banach function spaces that are \emph{rearrangement-invariant}. Thus, they satisfy not only \ref{df:qBFL.initial}--\ref{df:BFL.locL1} with the modulus of concavity $\cconc = 1$, but also
\begin{enumerate}
  \renewcommand{\theenumi}{(RI)}
  \item \label{df:RI}
  if $u$ and $v$ are \emph{equimeasurable}, i.e.,
  \[
    \meas{\{x\in \Pcal: u(x) > t\}} = \meas{\{x\in \Pcal: v(x) > t\}} \quad\mbox{for all } t\ge0,
  \]
  then $\|u\|_X = \|v\|_X$.
\end{enumerate}
For a detailed treatise on \ri spaces, see Bennett and Sharpley~\cite{BenSha}.

For $f \in \Mcal(\Pcal, \mu)$, we define its \emph{distribution function $\mu_f$} and the \emph{decreasing rearrangement $f^*$} by
\begin{alignat*}{2}
  \mu_f(t) & = \meas{\{x\in \Pcal: |f(x)| > t\}}, \quad & t&\in[0, \infty), \\
  f^*(t) &= \inf\{ \tau\ge 0: \mu_f(\tau) \le t\}, &t&\in[0, \infty).
\end{alignat*}
The Cavalieri principle implies that $\|f\|_{L^1(\Pcal, \mu)} = \|\mu_f \|_{L^1(\Rbb^+, \lambda^1)} = \|f^*\|_{L^1(\Rbb^+, \lambda^1)}$.

We define the \emph{fundamental function} of a rearrangement-invariant quasi-Banach function lattice $X$ as $\phi_X(t) = \|\chi_{E_t}\|_X$, where $E_t \subset \Pcal$ is an arbitrary measurable set with $\meas{E_t} = \min\{t, \meas{\Pcal}\}$, $t>0$. Note that different spaces may very well have the same fundamental function, which is the case, e.g., of the Lebesgue $L^p$ and the Lorentz $L^{p,q}$ spaces as $\phi_{L^p}(t) = \phi_{L^{p,q}}(t) = t^{1/p}$ for $t < \meas{\Pcal}$ whenever $p\in[1, \infty)$ and $q\in[1, \infty]$. For another example, the Orlicz space $L^\Psi$ based on an $N$-function $\Psi$ has the fundamental function $\phi_{L^\Psi}(t) = 1/\Psi^{-1}(1/t)$.

We also have the continuous embedding of $X$ into the weak-$X$ space for every \ri space $X$, which can be expressed by the inequality $\sup_{t>0} u^*(t) \phi_X(t) \le \|u\|_X$.

In the next proposition, we will see that a $p$-Poincar\'e inequality gives not only an integral but also a supremal estimate for the oscillation of a Newtonian function, provided that $p$ is sufficiently large.
\begin{pro}
\label{pro:esssup_est_p>s}
Suppose that $\Pcal$ supports a $p$-Poincar\'e inequality with $p > s$ such that \eqref{eq:dimension} is satisfied. Suppose further that $X \emb L^p_\loc$. Let $B_0 \subset \Pcal$ be a fixed ball of radius $R>0$. Then, there is a constant $c_{B_0}>0$ such that for every ball $B \subset B_0$ of radius $r\in(0, R)$, we have
\begin{equation}
  \label{eq:CX-esssup_est_p>s}
  \mbox{$C_X$-}\esssup_{x\in B} |u(x) - u_B| \lesssim \frac{\cemb(2\lambda B) r}{\meas{2\lambda B}^{1/p}} \|g \chi_{2\lambda B}\|_X \le c_{B_0} r^{1-s/p} \|g \chi_{2\lambda B}\|_X
\end{equation}
whenever $g \in X$ is an $X$-weak upper gradient of $u\in \NX$. Moreover, we can estimate $c_{B_0} \approx \cemb(2\lambda B_0) (R^s/\meas{2 \lambda B_0})^{1/p}$.
\end{pro}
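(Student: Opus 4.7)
The plan is to follow the classical Morrey-type argument: a telescoping chain of balls shrinking to a point $x\in B$, converting mean oscillations into $L^p$-integrals of $g$ via the $p$-Poincar\'e inequality, replacing the $L^p$ norm by the $X$ norm using $X\emb L^p_\loc$, and summing the resulting geometric series using the dimension condition~\eqref{eq:dimension}; summability holds precisely because $p>s$.

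Fix $x\in B$ at which the means $u_{B(x,2^{-i}r)}$ converge to $u(x)$ as $i\to\infty$; the argument will produce the desired bound pointwise for every such $x$. The set of exceptional points in $B$ must have $C_X$-capacity zero in order for the $C_X$-essential supremum estimate to follow, and this is the main technical hurdle. It relies on a Newtonian-capacity version of Lebesgue's differentiation theorem, which can be deduced by combining the $p$-Poincar\'e inequality with the Egorov-type convergence result~\cite[Corollary~7.2]{Mal1} (or via capacity weak-type bounds on a centered Hardy--Littlewood maximal operator, in the spirit of~\cite{Mal3}).

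For such an $x$, set $B^i\coloneq B(x,2^{-i}r)$ for $i\ge 0$. The telescoping identity $u(x) - u_{B^0} = \sum_{i=0}^\infty(u_{B^{i+1}} - u_{B^i})$ converges by the choice of $x$. Since $B^i = 2B^{i+1}$, doubling combined with the $p$-Poincar\'e inequality yields
\[
  |u_{B^{i+1}} - u_{B^i}| \le c_\dbl\fint_{B^i}|u-u_{B^i}|\,d\mu \lesssim 2^{-i}r\Bigl(\fint_{\lambda B^i}g^p\,d\mu\Bigr)^{1/p}.
\]
Because $x\in B$ and $\lambda\ge 1$, the triangle inequality gives $\lambda B^i\subset B(x,\lambda r)\subset 2\lambda B$, so the $L^p$ integral of $g$ over $\lambda B^i$ is bounded by $\cemb(2\lambda B)^p\|g\chi_{2\lambda B}\|_X^p$. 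Applying~\eqref{eq:dimension} with inner ball $B^i$ centered at $x\in 2\lambda B$ and outer ball $2\lambda B$ gives $\meas{\lambda B^i}^{-1/p}\lesssim 2^{si/p}\meas{2\lambda B}^{-1/p}$. The $i$\textsuperscript{th} summand is therefore controlled by $2^{-i(1-s/p)}\cdot r\cemb(2\lambda B)\meas{2\lambda B}^{-1/p}\|g\chi_{2\lambda B}\|_X$, and the series converges since $p>s$.

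To bridge from $u_{B^0}$ to $u_B$, observe that $B\cup B^0\subset 2B$ and that doubling renders $\meas{2B}$, $\meas{B}$, $\meas{B^0}$ comparable, so $|u_B - u_{B^0}|$ is bounded by a multiple of $\fint_{2B}|u-u_{2B}|\,d\mu$, which the $p$-Poincar\'e inequality on $2B$ (whose $\lambda$-dilation equals $2\lambda B$) controls by the same quantity. Adding the bridge to the telescoping estimate yields the first inequality in~\eqref{eq:CX-esssup_est_p>s}. For the second inequality, $B\subset B_0$ together with $\lambda\ge 1$ gives $2\lambda B\subset 2\lambda B_0$; testing the defining inequality for $\cemb(2\lambda B_0)$ on functions supported in $2\lambda B$ yields $\cemb(2\lambda B)\le\cemb(2\lambda B_0)$, while~\eqref{eq:dimension}, now applied to $2\lambda B$ inside $2\lambda B_0$, gives $\meas{2\lambda B}\gtrsim(r/R)^s\meas{2\lambda B_0}$. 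Hence $r\meas{2\lambda B}^{-1/p}\lesssim r^{1-s/p}(R^s/\meas{2\lambda B_0})^{1/p}$, producing the stated value of $c_{B_0}$.
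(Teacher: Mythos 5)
Your Morrey telescoping chain, the application of the $p$-Poincar\'e inequality, the embedding $X\emb L^p_\loc$, the geometric-series summation via~\eqref{eq:dimension} (convergent because $p>s$), and the bridge from $u_{B^0}$ to $u_B$ are all correct; they essentially reproduce what the paper obtains by citing~\cite[Proposition~4.27]{BjoBjo}. The derivation of the stated value of $c_{B_0}$ is likewise fine (and shares the same mild imprecision as the paper, namely that $2\lambda B\subset 2\lambda B_0$ may fail for $r$ close to $R$, so one should really compare $\cemb(2\lambda B)$ with $\cemb(C\lambda B_0)$ for a slightly larger fixed $C$; this is harmless since $c_{B_0}$ only needs to depend on $B_0$).

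There is, however, a genuine gap at the step you flag as "the main technical hurdle." You correctly observe that a pointwise bound on $B\setminus E$ yields the $C_X$-essential supremum estimate only if $C_X(E)=0$, but you then appeal to an unproven "Newtonian-capacity version of Lebesgue's differentiation theorem," suggesting it follows from the Egorov-type result~\cite[Corollary~7.2]{Mal1} or from maximal-operator capacity bounds. Neither route is available here without additional hypotheses: the Egorov argument would require an approximating sequence already known to converge to $u$ at the balls' centers (e.g.\ continuous approximants, whose density is exactly what this section is building toward, not assuming), and the maximal-function route requires weak-type capacity estimates that are not assumed nor established. The paper sidesteps this entirely by invoking~\cite[Corollary~6.13]{Mal1}: since $|u-u_B|\in DX$, the $C_X$-essential supremum over $B$ equals the ordinary $\mu$-essential supremum. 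With that in hand, your exceptional set $E$ only needs to be $\mu$-null, which is precisely what the classical Lebesgue differentiation theorem for doubling measures delivers. You should close the argument by citing that corollary at the outset, reducing the target to the ordinary essential supremum; as written, this key reduction is missing.
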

Note that we need to assume that $r \lesssim R$ for the second inequality in \eqref{eq:CX-esssup_est_p>s} as it may happen that $r \gg R$ even if $B \subset B_0$.
\begin{proof}
Let $B_0 = B(y, R) \subset \Pcal$ and $B = B(z, r)\subset B_0$ be arbitrary balls with $r<R$. Then,  $\mbox{$C_X$-}\esssup_{x\in B} |u(x) - u_B| = \esssup_{x\in B} |u(x) - u_B|$ by~\cite[Corollary~6.13]{Mal1} since $|u - u_B| \in DX$.

By~\cite[Proposition~4.27]{BjoBjo} and by the embedding $X \emb L^p_\loc$, we obtain that
\[
  \esssup_{x\in B} |u(x) - u_B| 
  \lesssim r \frac{\|g \chi_{2\lambda B}\|_{L^p}}{\meas{2\lambda B}^{1/p}} \le \frac{\cemb(2\lambda B) r}{\meas{2\lambda B}^{1/p}} \|g \chi_{2\lambda B}\|_X. 
\]
By applying \eqref{eq:dimension}, we see that $c_s (2\lambda r)^s / \meas{2\lambda B} \le (2\lambda R)^s / \meas{2\lambda B_0}$. Hence,
\[
 \frac{r}{\meas{2\lambda B}^{1/p}} = r^{1-s/p} \biggl(\frac{r^s}{\meas{2\lambda B}}\biggr)^{1/p} \le r^{1-s/p} \biggl(\frac{R^s}{c_s \meas{2\lambda B_0}}\biggr)^{1/p}\,.
\]
Moreover, $\cemb(2\lambda B) \le \cemb(2\lambda B_0)$, which yields the desired estimate
\[
  \frac{\cemb(2\lambda B) r}{\meas{2\lambda B}^{1/p}} \|g \chi_{2\lambda B}\|_X \le c_{B_0} r^{1-s/p} \|g \chi_{2\lambda B}\|_X\,.
  \qedhere
\]
\end{proof}
\begin{cor}
\label{cor:esssup_est_p>s}
Assume that $\Pcal$ supports a $p$-Poincar\'e inequality with $p > s$ such that~\eqref{eq:dimension} is satisfied. Let $\Omega \subset \Pcal$ be a fixed open set. Assume further that $X$ is an \ri space with fundamental function $\phi$ and that
\begin{equation}
  \label{eq:MX-Lp-emb_norm}
  c_\phi(\Omega) \coloneq \sup_{0<t<\meas{\Omega}} \phi(t) \biggl(\fint_0^t \frac{d\tau}{\phi(\tau)^p}\biggr)^{1/p} < \infty.
\end{equation}
Then, for every ball $B$ of radius $r>0$ such that $2\lambda B \subset \Omega$, we have
\[
  \mbox{$C_X$-}\esssup_{x\in B} |u(x) - u_B| \lesssim c_\phi(\Omega) r \frac{\|g \chi_{2\lambda B}\|_X}{\phi(\meas{2\lambda B})}\,,
\]
whenever $g \in X$ is an $X$-weak upper gradient of $u\in \NX$.
\end{cor}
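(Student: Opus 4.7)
The plan is to derive the corollary directly from Proposition~\ref{pro:esssup_est_p>s} by bounding the local embedding constant $\cemb(2\lambda B)$ in terms of the fundamental function $\phi$ and the quantity $c_\phi(\Omega)$. The only new ingredient is a standard \ri space estimate for the $L^p$ norm of a function of small support.

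First I would invoke the weak-$X$ inequality $f^*(t)\phi(t) \le \|f\|_X$ recorded in the paragraph immediately preceding the proposition. For any measurable $E \subset \Omega$ of finite positive measure and any $f \in X$ supported on $E$, the fact that $f^*$ vanishes beyond $\meas{E}$, combined with the layer-cake identity and this inequality, yields
\[
  \|f\|_{L^p}^p = \int_0^{\meas{E}} f^*(t)^p\,dt \le \|f\|_X^p \int_0^{\meas{E}} \frac{dt}{\phi(t)^p} = \|f\|_X^p\, \meas{E}\, \fint_0^{\meas{E}} \frac{dt}{\phi(t)^p}\,.
\]
Applying the hypothesis \eqref{eq:MX-Lp-emb_norm} at the value $t = \meas{E} \le \meas{\Omega}$ and taking $p$-th roots gives the desired \ri embedding
\[
  \|f\|_{L^p} \le c_\phi(\Omega)\, \frac{\meas{E}^{1/p}}{\phi(\meas{E})}\, \|f\|_X\,.
\]
Specializing to $f = g \chi_{2\lambda B}$ and $E = 2\lambda B$, whose measure is at most $\meas{\Omega}$ since $2\lambda B \subset \Omega$, I read off that $X \emb L^p$ on $2\lambda B$ with embedding constant $\cemb(2\lambda B) \le c_\phi(\Omega)\, \meas{2\lambda B}^{1/p}/\phi(\meas{2\lambda B})$.

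Substituting this bound into the first inequality in \eqref{eq:CX-esssup_est_p>s} of Proposition~\ref{pro:esssup_est_p>s} then produces the desired estimate after one cancellation of $\meas{2\lambda B}^{1/p}$. The main obstacle, if one is to be named, is purely bookkeeping: the definition of $c_\phi(\Omega)$ as a supremum must be usable at the specific value $t = \meas{2\lambda B}$, which is ensured by $2\lambda B \subset \Omega$. All of the deeper analytic content---the $p$-Poincar\'e inequality and the oscillation estimate from~\cite[Proposition~4.27]{BjoBjo}---is already absorbed into Proposition~\ref{pro:esssup_est_p>s} and need not be revisited.
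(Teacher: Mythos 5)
Your proof is correct and is essentially the same argument as the paper's: both rest on the Cavalieri-principle/weak-$X$ estimate $f^*(t)\phi(t)\le\|f\|_X$ and the cancellation via the definition of $c_\phi(\Omega)$. The only cosmetic difference is that you package the estimate as a bound on the local embedding constant $\cemb(2\lambda B)$ and substitute it into the first inequality of \eqref{eq:CX-esssup_est_p>s}, while the paper plugs the same computation directly into the intermediate step $\mbox{$C_X$-}\esssup_{x\in B}|u(x)-u_B|\lesssim r\bigl(\fint_{2\lambda B}g^p\,d\mu\bigr)^{1/p}$ from the proof of Proposition~\ref{pro:esssup_est_p>s}.
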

\begin{proof}
In the proof of Proposition~\ref{pro:esssup_est_p>s}, we have seen that
\[
  \mbox{$C_X$-}\esssup_{x\in B} |u(x) - u_B| \lesssim r \biggl(\fint_{2\lambda B} g^p\,d\mu\biggr)^{1/p}.
\]
By the Cavalieri principle and by the embedding $X\emb\textrm{weak-}X$, it follows that
\begin{align*}
  \biggl(\fint_{2\lambda B} g^p\,d\mu\biggr)^{1/p} & = \biggl( \fint_0^{\meas{2\lambda B}} \biggl(\frac{(g \chi_{2\lambda B})^*(t) \phi(t)}{\phi(t)}\biggr)^p dt \biggr)^{1/p}\\
	& \le \phi(\meas{2\lambda B}) \biggl(\fint_0^{\meas{2\lambda B}} \frac{dt}{\phi(t)^p}\biggr)^{1/p}  \sup_{0<\tau<\meas{2\lambda B}} \frac{(g\chi_{2\lambda B})^*(\tau) \phi(\tau)}{\phi(\meas{2\lambda B})} \\
	& \le c_\phi(\Omega) \frac{\|g \chi_{2\lambda B}\|_X}{\phi(\meas{2\lambda B})}\,.
  \qedhere
\end{align*}
\end{proof}
%
%
The previous proposition and corollary can be refined in the critical case when $p = s$; we will however need to assume that $X$ is embedded into the Zygmund space $L^s(\log L)^{\alpha}$ for some $\alpha>1$. That result will be further improved under somewhat stronger assumptions on $\Pcal$. 
\begin{df}
For $p\ge 1$ and $\alpha\ge0$, the \emph{Zygmund space} $L^p(\log L)^{\alpha}(E)$, where $E \subset \Pcal$ is measurable and $\meas{E}<\infty$, consists of the measurable functions $u: E\to\overline\Rbb$ such that
\[
  \| u \|^p_{L^p(\log L)^{\alpha}(E)} = \int_0^{\meas{E}} \biggl(u^*(t) \biggl(1+\log \frac{\meas{E}}{t}\biggr)^\alpha\biggr)^{p}\,dt < \infty.
\]
As an alternative, which is well-defined even if $\meas{E}=\infty$, we may use
\begin{equation}
  \label{eq:LpLog-eqnorm}
  \vvvert u \vvvert^p_{L^p(\log L)^{\alpha}(E)} = \int_0^{\meas{E}} \biggl(u^*(t) \biggl(1+\log^+ \frac 1t\biggr)^\alpha\biggr)^{p}\,dt,
\end{equation}
where $\log^+$ denotes the positive part of $\log$.
\end{df}
It is easy to see that $\vvvert u \vvvert_{L^p(\log L)^{\alpha}(E)} \approx \| u \|_{L^p(\log L)^{\alpha}(E)}$ where the constants depend on $\meas{E}<\infty$. Obviously, $L^p(\log L)^0 = L^p$. Furthermore, the Zygmund spaces are classical Lorentz spaces and they coincide with certain Orlicz classes whenever $\alpha \ge 1/p$. It is customary to drop the respective exponent in the notation $L^p(\log L)^\alpha (E)$ if $p=1$ or $\alpha = 1$.
%
%
\begin{pro}
\label{pro:esssup_est_p=s}
Suppose that $\Pcal$ supports an $s$-Poincar\'e inequality with $s$ given by \eqref{eq:dimension}. Assume that $X \emb L^s(\log L)^\alpha_\loc\fcrim$ for some $\alpha \ge 1$ if $s=1$ and for some $\alpha > 1$ if $s>1$. Let $B_0 \subset \Pcal$ be a fixed ball of radius $R>0$. Then, there is a constant $c_{B_0}>0$ such that for every ball $B \subset B_0$ of radius $r\in(0, R)$, we have
\begin{equation}
  \label{eq:CX-esssup_est_p=d_Zyg}
  \mbox{$C_X$-}\esssup_{x\in B} |u(x) - u_B| \lesssim \frac{\cemb(2\lambda B) r}{\meas{2\lambda B}^{1/s}} \|g \chi_{2\lambda B}\|_X \le c_{B_0} \|g \chi_{2\lambda B}\|_X
\end{equation}
whenever $g \in X$ is an $X$-weak upper gradient of $u\in \NX$. Moreover, we can estimate $c_{B_0} \approx \cemb(2\lambda B_0) R/\meas{2 \lambda B_0}^{1/s}$.
\end{pro}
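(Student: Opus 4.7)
The plan is to parallel the proof of Proposition~\ref{pro:esssup_est_p>s}, with the main new ingredient being a rearrangement bound that extracts logarithmic decay from the Zygmund norm. As before, the identity $\mbox{$C_X$-}\esssup_{x \in B} |u(x) - u_B| = \esssup_{x \in B} |u(x) - u_B|$ follows from~\cite[Corollary~6.13]{Mal1}, since $|u - u_B| \in DX$; it therefore suffices to bound the ordinary essential supremum.

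The heart of the argument is a pointwise telescoping bound valid for $\mu$-a.e.\@ $x \in B$. Choosing a dyadic chain of balls $B_i = B(x, 2^{-i} r)$ centred at a Lebesgue point $x$ of $u$, applying the $s$-Poincar\'e inequality to each pair $B_i, B_{i+1}$, and using doubling to compare the corresponding averages, one obtains
\[
  |u(x) - u_B| \lesssim \sum_{i=0}^{\infty} r_i \Biggl(\fint_{\lambda B_i} g^s \, d\mu\Biggr)^{1/s}, \qquad r_i = 2^{-i} r.
\]
For each term, the Cavalieri principle together with the inclusion $\lambda B_i \subset 2\lambda B$ gives $(g\chi_{\lambda B_i})^*(t) \le (g\chi_{2\lambda B})^*(t)$, and pulling out the minimum of the weight $(1 + \log(\mu(2\lambda B)/t))^\alpha$ on the interval $(0, \mu(\lambda B_i))$ yields
\[
  \Biggl(\fint_{\lambda B_i} g^s \, d\mu\Biggr)^{1/s} \lesssim \frac{\|g \chi_{2\lambda B}\|_{L^s(\log L)^\alpha}}{\mu(\lambda B_i)^{1/s}} \Biggl(1 + \log \frac{\mu(2\lambda B)}{\mu(\lambda B_i)}\Biggr)^{-\alpha}.
\]

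To finish, the embedding $X \emb L^s(\log L)^\alpha_\loc$ replaces the Zygmund norm by $\cemb(2\lambda B) \|g \chi_{2\lambda B}\|_X$. The dimension condition~\eqref{eq:dimension}, applied with the centre of $2\lambda B$ and $y = x \in B$, shows that $r_i / \mu(\lambda B_i)^{1/s} \lesssim r / \mu(2\lambda B)^{1/s}$ uniformly in $i$, while the lower-dimension estimate~\eqref{eq:dimension2} (available since $\Pcal$ is connected, as it supports a Poincar\'e inequality) gives $\log(\mu(2\lambda B)/\mu(\lambda B_i)) \gtrsim i$, so that $\sum_i (1 + ci)^{-\alpha}$ converges in the stated range of $\alpha$. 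The estimate for $c_{B_0}$ then follows from~\eqref{eq:dimension} exactly as in the proof of Proposition~\ref{pro:esssup_est_p>s}, specialized to $p = s$. The main obstacle I foresee is the calibration of the logarithmic weight in the rearrangement step: a plain $L^s$ bound on each telescoping term would produce the divergent series $\sum_i 1$, so the $(\log)^{-\alpha}$ factor drawn from the Zygmund norm is essential, and the hypothesis on $\alpha$ is essentially sharp for this telescoping scheme (with the borderline $s=1$, $\alpha=1$ case requiring a finer summation argument or a direct use of the $1$-PI).
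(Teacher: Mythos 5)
Your strategy for $\alpha>1$ is sound, and it is in fact a genuinely simpler route than the paper's. The telescoping chain, the $s$-Poincar\'e estimates, and the rearrangement step $\bigl(\fint_{\lambda B_i} g^s\,d\mu\bigr)^{1/s}\lesssim \|g\chi_{2\lambda B}\|_{L^s(\log L)^\alpha}\, \mu(\lambda B_i)^{-1/s}\bigl(1+\log(\mu(2\lambda B)/\mu(\lambda B_i))\bigr)^{-\alpha}$ are all correct, and the estimates $r_i/\mu(\lambda B_i)^{1/s}\lesssim r/\mu(2\lambda B)^{1/s}$ from~\eqref{eq:dimension} and $1+\log(\mu(2\lambda B)/\mu(\lambda B_i))\gtrsim 1+i$ from~\eqref{eq:dimension2} do reduce the whole sum to $\sum_i (1+ci)^{-\alpha}$, which converges for $\alpha>1$. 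What you gain is a shorter argument with no H\"older-for-series or double-sum gymnastics: you extract the decay from each telescoping term individually by bounding the log weight from below on $(0,\mu(\lambda B_i))$. The paper instead keeps the whole sum intact, applies H\"older with weights $(n+1)^{\pm\alpha/s'}$ (for $s>1$), interchanges a double sum dyadically, and only at the very end converts the factor $(j+1)^{\alpha s}$ into $(1+\log A/t)^{\alpha s}$. That is more elaborate, and for $\alpha>1$ buys nothing you don't already have.

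However, there is a genuine gap: the proposition includes the borderline case $s=1$, $\alpha=1$, and your argument fails there, since $\sum_i (1+ci)^{-1}$ diverges. This is not a cosmetic issue that a ``finer summation argument'' patches in passing; the divergence is an intrinsic cost of pulling out the entire Zygmund norm at each step of the telescope. The cure is precisely to \emph{not} take the minimum of the log weight over $(0,\mu(\lambda B_i))$. The paper instead, for $s=1$, writes $\sum_{n\ge 0}\int_0^{2^{-n\sigma}A}\tilde g^*(t)\,dt = \sum_{j\ge0}(j+1)\int_{2^{-(j+1)\sigma}A}^{2^{-j\sigma}A}\tilde g^*(t)\,dt$ by splitting the integration range dyadically and swapping the order of summation, and only then bounds $(j+1)\lesssim 1+\log(A/t)$ on each dyadic annulus. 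This resummation keeps the log factor calibrated to the scale at which the mass of $g$ actually sits, instead of penalizing every ball $\lambda B_i$ with the worst-case weight at its outer boundary, and it is exactly what lets the argument close at $\alpha=1$. You correctly flagged this as the main obstacle, but the proposal as written does not prove the proposition in the stated generality; you need to replace the ``pull out the minimum of the weight'' step for $s=1$, $\alpha=1$ with a Fubini-type rearrangement of the double sum as in the paper (or give some other argument of comparable precision).
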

\begin{proof}
Let $B_0 = B(y, R) \subset \Pcal$ and $B = B(z, r)\subset B_0$ be arbitrary balls. It follows from~\cite[Corollary~6.13]{Mal1} that $\mbox{$C_X$-}\esssup_{x\in B} |u(x) - u_B| = \esssup_{x\in B} |u(x) - u_B|$ as $|u - u_B| \in DX$.

Let $x \in B$ be a Lebesgue point of $u$ and set $\widetilde{B} = B(x, r)$ and $\widetilde{B}_n = B(x, 2^{-n}r)$ for $n=0,1,2,\ldots$. 
Then,
\[
  u(x) = \lim_{n\to\infty} u_{\widetilde{B}_n} = u_{\widetilde{B}} + \sum_{n=0}^\infty \bigl( u_{\widetilde{B}_{n+1}} - u_{\widetilde{B}_n} \bigr)\,.
\]
Applying the triangle inequality, the doubling condition, and the $s$-Poincar\'{e} inequality (where an $X$-weak upper gradient $g\in X$ of $u$ may be used in light of the embedding $X \emb L^s(\log L)^\alpha_\loc \emb L^s_\loc$) yields
\begin{align*}
  |u(x) - u_{\widetilde{B}}| & \le \sum_{n=0}^\infty \bigl| u_{\widetilde{B}_{n+1}} - u_{\widetilde{B}_n} \bigr| \le \sum_{n=0}^\infty \fint_{\widetilde{B}_{n+1}} \bigl| u - u_{\widetilde{B}_n}\bigr|\,d\mu \\
  & \lesssim \sum_{n=0}^\infty \fint_{\widetilde{B}_n} \bigl| u - u_{\widetilde{B}_n}\bigr|\,d\mu
  \lesssim r \sum_{n=0}^\infty 2^{-n} \biggl(\fint_{\lambda\widetilde{B}_n} g^s\,d\mu\biggr)^{1/s}\,.
\end{align*}
We have $c_s \meas{\lambda \swidetilde{B}}\le 2^{ns} \meas{\lambda \swidetilde{B}_n}$ by \eqref{eq:dimension}, whence
\[
  \sum_{n=0}^\infty 2^{-n} \biggl(\fint_{\lambda\widetilde{B}_n} g^s\,d\mu\biggr)^{1/s} \lesssim \frac{1}{\meass{\lambda \widetilde{B}}^{1/s}} \sum_{n=0}^\infty \biggl(\int_{\lambda\widetilde{B}_n} g^s\,d\mu\biggr)^{1/s}.
\]
Let us, for the sake of brevity, write $\tilde{g} = g \chi_{\lambda \widetilde{B}}$. Since $\Pcal$ is connected due to the Poincar\'e inequality, we have that $\meas{\lambda \swidetilde{B}_n} \le c_\sigma 2^{-n\sigma} \meas{\lambda \swidetilde B}$ for some $0<\sigma\le s$ and $c_\sigma\ge 1$ by \eqref{eq:dimension2}. Let $A = c_\sigma \meas{\lambda \swidetilde B}$. Applying the Hardy--Littlewood inequality and replacing $\tilde{g}$ by its decreasing rearrangement, we obtain that
\[
\sum_{n=0}^\infty \biggl(\int_{\lambda\widetilde{B}_n} g^s\,d\mu\biggr)^{1/s}
 \le \sum_{n=0}^\infty \biggl(\int_0^{\meas{\lambda\swidetilde{B}_n}} \tilde{g}^*(t)^s\,dt\biggr)^{1/s} \le \sum_{n=0}^\infty \biggl(\int_0^{2^{-n\sigma} A} \tilde{g}^*(t)^s\,dt\biggr)^{1/s}.
\]
If $s=1$, then splitting the integration domain dyadically gives that
\[
  \sum_{n=0}^\infty \int_0^{2^{-n\sigma} A} \tilde{g}^*(t)\,dt = \sum_{n=0}^\infty \sum_{j=n}^\infty  \int_{2^{-(j+1)\sigma} A}^{2^{-j\sigma} A} \tilde{g}^*(t)\,dt = \sum_{j=0}^\infty (j+1) \int_{2^{-(j+1)\sigma} A}^{2^{-j\sigma} A} \tilde{g}^*(t)\,dt.
\]
If $s>1$, then the H\"older inequality for series with $s' = s/(s-1)$ yields that
\begin{align}
  \label{eq:esssup-p=s-Holder}
	\sum_{n=0}^\infty \biggl(\int_0^{2^{-n\sigma} A} &\tilde{g}^*(t)^s\,dt\biggr)^{1/s} = \sum_{n=0}^\infty \frac{(n+1)^{\alpha/s'}}{(n+1)^{\alpha/s'}} \biggl(\int_0^{2^{-n\sigma} A} \tilde{g}^*(t)^s\,dt\biggr)^{1/s} \\
 & \notag
  \le \biggl(\sum_{n=0}^\infty (n+1)^{\alpha s/s'} \int_0^{2^{-n\sigma} A} \tilde{g}^*(t)^s\,dt\biggr)^{1/s} \biggl( \sum_{n=0}^\infty \frac{1}{(n+1)^{\alpha}} \biggr)^{1/s'},
\end{align}
where the latter series converges since $\alpha > 1$. Next,
\begin{align*}
  \sum_{n=0}^\infty & (n+1)^{\alpha(s-1)} \int_0^{2^{-n\sigma} A} \tilde{g}^*(t)^s\,dt = \sum_{n=0}^\infty \sum_{j=n}^\infty (n+1)^{\alpha(s-1)} \int_{2^{-(j+1)\sigma} A}^{2^{-j\sigma} A} \tilde{g}^*(t)^s\,dt \\
   & \quad = \sum_{j=0}^\infty \sum_{n=0}^j (n+1)^{\alpha(s-1)} \int_{2^{-(j+1)\sigma} A}^{2^{-j\sigma} A} \tilde{g}^*(t)^s\,dt 
   \lesssim \sum_{j=0}^\infty (j+1)^{\alpha s} \int_{2^{-(j+1)\sigma} A}^{2^{-j\sigma} A} \tilde{g}^*(t)^s\,dt.
\end{align*}
We have thus shown for all $s \ge 1$ that 
\[
  \sum_{n=0}^\infty \biggl(\int_0^{2^{-n\sigma} A} \tilde{g}^*(t)^s\,dt\biggr)^{1/s} \lesssim \sum_{j=0}^\infty (j+1)^{\alpha s} \int_{2^{-(j+1)\sigma} A}^{2^{-j\sigma} A} \tilde{g}^*(t)^s\,dt.
\]
We can estimate $(j+1)^{\alpha s} \lesssim (1+\log A/t)^{\alpha s}$ for $t\in (2^{-(j+1)\sigma}A, 2^{-j\sigma}A)$. Therefore,
\[
  \sum_{j=0}^\infty (j+1)^{\alpha s} \int_{2^{-(j+1)\sigma} A}^{2^{-j\sigma} A} \tilde{g}^*(t)^s\,dt \lesssim \sum_{j=0}^\infty \int_{2^{-(j+1)\sigma} A}^{2^{-j\sigma} A} \tilde{g}^*(t)^s \biggl( 1+ \log \frac{A}{t}\biggr)^{\alpha s}\,dt\,.
\]
The decreasing rearrangement $\tilde{g}^*$ is supported in $[0, \meas{\lambda \swidetilde{B}}]$ and hence
\begin{align*}
  \int_0^A & \tilde{g}^*(t)^s \biggl( 1+ \log \frac{A}{t}\biggr)^{\alpha s}\,dt = \int_0^{\meas{\lambda \swidetilde{B}}} \tilde{g}^*(t)^s \biggl( 1+ \log \frac{c_\sigma \meas{\lambda \swidetilde{B}}}{t}\biggr)^{\alpha s}\,dt \\
	& \le (1+\log c_\sigma)^{\alpha s} \int_0^{\meas{\lambda \swidetilde{B}}} \tilde{g}^*(t)^s \biggl( 1+ \log \frac{\meas{\lambda \swidetilde{B}}}{t}\biggr)^{\alpha s}\,dt \approx \| g \chi_{\lambda \widetilde B}\|^s_{L^s(\log L)^{\alpha}(\lambda \widetilde B)}\,.
\end{align*}
Putting all the estimates together, we obtain
\[
  |u(x) - u_{\widetilde{B}}| \lesssim r \frac{\| g \chi_{\lambda \widetilde B}\|_{L^s(\log L)^{\alpha}(\lambda \widetilde B)}}{\meas{\lambda \widetilde{B}}^{1/s}}\,.
\]
The triangle and the $s$-Poincar\'e inequality provide us with the estimate
\begin{align*}
  |u_{\widetilde{B}} - u_B| & \le |u_{\widetilde{B}} - u_{2B}| + |u_{2B} - u_B| \le \fint_{\widetilde B} |u-u_{2B}|\,d\mu + \fint_{B} |u-u_{2B}|\,d\mu \\
  & \lesssim \fint_{2B} |u-u_{2B}|\,d\mu \lesssim r \biggl(\fint_{2\lambda{B}} g^s\,d\mu\biggr)^{1/s} \lesssim r \frac{\| g \chi_{2\lambda B}\|_{L^s(\log L)^{\alpha}(2\lambda B)}}{\meas{2\lambda B}^{1/s}}\,.
\end{align*}
Altogether, we see that
\begin{align*}
  |u(x) - u_B| & \le |u(x) - u_{\widetilde{B}}| + |u_{\widetilde{B}} - u_B| 
  \\
	& \lesssim r \frac{\|g \chi_{2\lambda B}\|_{L^s(\log L)^{\alpha}(2\lambda B)}}{\meas{2\lambda B}^{1/s}} \le \frac{\cemb(2\lambda B) r}{\meas{2\lambda B}^{1/s}} \|g \chi_{2\lambda B}\|_X
	\le c_{B_0} \|g \chi_{2\lambda B}\|_X\,,
\end{align*}
where we applied \eqref{eq:dimension} to estimate $r/\meas{2\lambda B}^{1/s} \le R/(c_s \meas{2\lambda B_0})^{1/s}$. The Lebesgue differentiation theorem, which holds true since $\mu$ is doubling (see Heinonen~\cite[Section~1]{Hei}), yields that a.e.\@ $x\in B$ is a Lebesgue point of $u$, whence this inequality holds for a.e.\@ $x\in B$ and thus for the essential supremum over $B$.
\end{proof}
The technique used in the previous proof for $s>1$ seems to work even if we assume that $X \emb L^s \log L (\log \log L)^{\beta}$ with $\beta > 1/s'$. The main difference in the proof would be to replace $(n+1)^{\alpha/s'}$ in \eqref{eq:esssup-p=s-Holder} by $(n+1)^{1/s'} \log(n+2)^{\beta}$. It also appears to be possible to iterate the logarithm several times raised to suitable powers. Nevertheless, we refrain from properly formulating and proving this claim.

It follows by Talenti~\cite{Tal} that Zygmund--Sobolev functions in $\Rbb^n$, $n\ge2$, are bounded for every $\alpha>1/n'\coloneq 1-1/n$, which indicates that $1$ is not the optimal borderline value for the exponent $\alpha$ in the previous proposition. On the other hand, $1/n'$ is sharp, which can be seen by considering $u(x) = \sqrt{\log \log (e^2/|x|)}$ for $x\in B \coloneq B(0,1) \subset \Rbb^n$, $n\ge 2$. Apparently, $u$ is unbounded even though $u \in N^1L^n(\log L)^\alpha(B) \subset W^1L^n(\log L)^\alpha(B)$ for every $\alpha \in [0, 1/n']$.

We will show in Corollary~\ref{cor:esssup_est_p=dZygViaLor} below that we can in fact obtain the local  essential boundedness of Zygmund--Newtonian functions for all $\alpha > 1/n'$ as a special case of a more general result if $\Pcal$ supports a stronger Poincar\'e inequality.
\begin{df}
The \emph{Lorentz space} $L^{p,1}(\Pcal)$ for $1\le p < \infty$ is the Banach function space that consists of the measurable functions $u: \Pcal \to \overline{\Rbb}$ such that
\[
   \| u \|_{L^{p,1}(\Pcal)} \coloneq \frac{1}{p} \int_0^\infty u^*(t) t^{1/p - 1} \,dt < \infty.
\]
\end{df}
%
%
The following proposition shows that in the borderline case it suffices that the ($X$-weak) upper gradient lies in the Lorentz $L^{s,1}$ space. A similar claim was proven by Romanov~\cite{Rom} under a highly restrictive assumption that $\mu$ is $s$-Ahlfors regular, i.e., both \eqref{eq:dimension} and \eqref{eq:dimension2} hold with $s=\sigma$. His paper served as an inspiration to use Abel's partial summation formula in the proof below.

The price we have to pay is that a stronger Poincar\'e inequality is needed. Actually, assuming that $\Pcal$ supports an $s$-Poincar\'e inequality is enough if $\Pcal$ is complete. By Keith and Zhong~\cite{KeiZho}, the Poincar\'e inequality is a self-improving property in that case and hence $\Pcal$ supports a $p$-Poincar\'e inequality with some $p<s$.
%
%
\begin{pro}
\label{pro:esssup_est_p=sLorentz}
Suppose that $\Pcal$ supports a $p$-Poincar\'e inequality with $1\le p < s$ such that \eqref{eq:dimension} is satisfied. Assume that $X \emb L^{s,1}_\loc$. Let $B_0 \subset \Pcal$ be a fixed ball of radius $R>0$. Then, there is a constant $c_{B_0}>0$ such that for every ball $B \subset B_0$ of radius $r\in(0, R)$, we have
\begin{equation}
  \label{eq:CX-esssup_est_p=d_Lor}
  \mbox{$C_X$-}\esssup_{x\in B} |u(x) - u_B| \lesssim \frac{\cemb(2\lambda B) r}{\meas{2\lambda B}^{1/s}} \|g \chi_{2\lambda B}\|_X \le c_{B_0} \|g \chi_{2\lambda B}\|_X
\end{equation}
whenever $g \in X$ is an $X$-weak upper gradient of $u\in \NX$. Moreover, we can estimate $c_{B_0} \approx \cemb(2\lambda B_0) R/\meas{2 \lambda B_0}^{1/s}$.
\end{pro}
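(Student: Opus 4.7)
The plan is to adapt the chaining argument from the proof of Proposition~\ref{pro:esssup_est_p=s} with three key modifications: replace the $s$-Poincar\'{e} inequality by the $p$-Poincar\'{e} inequality with $p < s$, replace the Zygmund/H\"{o}lder type estimate on the chaining sum by an Abel partial summation (following Romanov~\cite{Rom}) that is tailored to pick up the Lorentz $L^{s,1}$ norm, and finally invoke the embedding $X \emb L^{s,1}_\loc$. As in the previous proposition, the crux is an estimate for a single Lebesgue point $x \in B$ of $u$, after which the $C_X$-essential supremum is recovered via~\cite[Corollary~6.13]{Mal1} and the Lebesgue differentiation theorem.

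First, fix $B = B(z, r) \subset B_0$ and a Lebesgue point $x \in B$; set $\widetilde B = B(x, r)$, $\widetilde B_n = B(x, 2^{-n} r)$, and $\tilde g = g \chi_{\lambda \widetilde B}$. Standard dyadic chaining via the $p$-Poincar\'{e} inequality (valid for the $X$-weak upper gradient $g$ because $X \emb L^{s,1}_\loc \emb L^p_\loc$ and \cite[Lemma~5.6]{Mal1}) gives
\[
|u(x) - u_{\widetilde B}| \lesssim r \sum_{n=0}^\infty 2^{-n} \biggl(\fint_{\lambda \widetilde B_n} g^p\,d\mu\biggr)^{1/p}.
\]
Write $A_n = \meas{\lambda \widetilde B_n}$ and $A = \meas{\lambda \widetilde B}$. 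The Hardy--Littlewood rearrangement inequality $\int_{\lambda \widetilde B_n} g^p\,d\mu \le \int_0^{A_n} \tilde g^*(t)^p\,dt$ together with \eqref{eq:dimension}, which yields $A_n^{-1/p} \lesssim 2^{ns/p} A^{-1/p}$, transforms the above into a weighted sum $\frac{r}{A^{1/p}} \sum_n 2^{n(s/p-1)} \bigl(\int_0^{A_n} \tilde g^{*p}\bigr)^{1/p}$, where the weight exponent $s/p - 1$ is \emph{positive} since $p < s$.

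The core step is Abel's partial summation. Decompose each inner integral dyadically as $\int_0^{A_n} \tilde g^{*p} = \sum_{k \ge n} \int_{A_{k+1}}^{A_k} \tilde g^{*p}$; apply the concavity inequality $\bigl(\sum a_k\bigr)^{1/p} \le \sum a_k^{1/p}$ (valid for $p \ge 1$); interchange summation; and use $s/p > 1$ so that $\sum_{n=0}^k 2^{n(s/p-1)} \approx 2^{k(s/p-1)}$. The resulting sum is $\sum_k 2^{k(s/p-1)} \bigl(\int_{A_{k+1}}^{A_k} \tilde g^{*p}\bigr)^{1/p}$. Bounding the inner integral by $A_k^{1/p} \tilde g^*(A_{k+1})$ (using that $\tilde g^*$ is decreasing and $A_k - A_{k+1} \le A_k$, which is comparable to $A_{k+1}$ by doubling), and estimating $A_k^{1/p} \le A^{1/p - 1/s} A_k^{1/s}$ via $A_k \le A$ and $1/p - 1/s > 0$, reduces everything to $A^{1/p - 1/s} \sum_k 2^{k(s/p-1)} A_k^{1/s} \tilde g^*(A_{k+1})$. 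The last sum is comparable to $\|\tilde g\|_{L^{s,1}}$ by the dyadic representation $\|\tilde g\|_{L^{s,1}} \approx \sum_k \tilde g^*(A_{k+1}) A_k^{1/s}$, in conjunction with \eqref{eq:dimension2}, which is available because $\Pcal$ is connected by virtue of supporting a Poincar\'{e} inequality. Hence $|u(x) - u_{\widetilde B}| \lesssim r \|\tilde g\|_{L^{s,1}} / A^{1/s}$.

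To finish, the triangle inequality with $u_{2B}$ combined with one more application of the $p$-Poincar\'{e} inequality on $2B$ (exactly as in the proof of Proposition~\ref{pro:esssup_est_p=s}) promotes the estimate from $|u(x) - u_{\widetilde B}|$ to $|u(x) - u_B|$; the embedding $X \emb L^{s,1}_\loc$ replaces $\|\tilde g\|_{L^{s,1}}$ by $\cemb(2\lambda B) \|g \chi_{2\lambda B}\|_X$; and another application of \eqref{eq:dimension} gives $r / \meas{2\lambda B}^{1/s} \lesssim R / \meas{2\lambda B_0}^{1/s}$, absorbing the dependence on $B \subset B_0$ into $c_{B_0}$. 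Taking the essential supremum over the Lebesgue points in $B$ and passing to the $C_X$-essential supremum produces \eqref{eq:CX-esssup_est_p=d_Lor}. The main obstacle is the Abel summation together with the $L^{s,1}$ identification, because when $\Pcal$ is \emph{not} Ahlfors regular the dyadic partition associated with $A_k$ does not respect the measure $t^{1/s-1}\,dt$ underlying the Lorentz norm; one must carefully exploit the interplay between the lower bound \eqref{eq:dimension} and the upper bound \eqref{eq:dimension2} on $A_k / A$ to justify the identification up to absolute constants.
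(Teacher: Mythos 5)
Your setup and chaining estimate follow the paper's route exactly, and the endgame (triangle inequality with $u_{2B}$, the embedding $X\emb L^{s,1}_\loc$, and the dimension estimate) is also the same. But the core Abel-summation step contains a genuine gap, and it is the one you yourself flag at the end without resolving.

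The problem is your early and lossy use of \eqref{eq:dimension}: you replace $A_n^{-1/p}$ by $2^{ns/p}A^{-1/p}$ \emph{before} decomposing the integral, which converts the weight $2^{-n}/A_n^{1/p}$ into $2^{n(s/p-1)}/A^{1/p}$ with a positive exponent $s/p-1>0$. After the dyadic decomposition and interchange of sums you are left with $\sum_k 2^{k(s/p-1)}A_k^{1/p}\,\tilde g^*(A_{k+1})$, and you then need $2^{k(s/p-1)}A_k^{1/p}\lesssim A^{1/p-1/s}A_k^{1/s}$, i.e. $2^{k(s/p-1)}\lesssim (A/A_k)^{1/p-1/s}$. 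But \eqref{eq:dimension2} only gives $A/A_k\gtrsim 2^{k\sigma}$, so $(A/A_k)^{1/p-1/s}\gtrsim 2^{k\sigma(1/p-1/s)}$, and the desired bound forces $s/p-1\le \sigma(1/p-1/s)$, which simplifies to $\sigma\ge s$ — that is, Ahlfors $s$-regularity, which is precisely what the proposition avoids assuming. In a general doubling Poincar\'e space ($\sigma<s$) the coefficients $2^{k(s/p-1)}$ grow too fast relative to the decay of $A_k^{1/s}$, so the sum is \emph{not} controlled by $\|\tilde g\|_{L^{s,1}}$.

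The paper sidesteps this by performing Abel's partial summation \emph{before} comparing $A_n$ to the top scale $A$. Writing $I_n=\|\tilde g^*\chi_{(0,A_n)}\|_{L^{p,1}(\Rbb^+)}$, the relevant partial sums are $\sum_{k\le n}\frac{2^{-k}}{A_k^{1/p}}$ multiplied against increments $I_n-I_{n+1}$, and \eqref{eq:dimension} is applied only to pass from $2^{-k}/A_k^{1/p}$ to $A_k^{1/s-1/p}/A^{1/s}$ while the weight $t^{1/p-1}$ in the increment is converted to $t^{1/s-1}$ at the cost of $A_n^{1/p-1/s}$. This leaves the \emph{ratio} $(A_n/A_k)^{1/p-1/s}$ for $k\le n$, which by \eqref{eq:dimension2} decays geometrically in $n-k$; the sum over $k$ is then uniformly bounded. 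The essential difference is that the paper's argument only ever needs control of $A_n/A_k$ for $k\le n$ (a ratio of comparable scales), never of $A_n\,2^{ns}/A$, which is unbounded unless the measure is Ahlfors regular. To repair your proof you would need to keep the weight $2^{-n}/A_n^{1/p}$ intact through the Abel summation rather than converting it to $2^{n(s/p-1)}/A^{1/p}$ at the outset.
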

\begin{proof}
Let $B_0 = B(y, R) \subset \Pcal$ and $B = B(z, r)\subset B_0$ be arbitrary balls with $r<R$. Then,  $\mbox{$C_X$-}\esssup_{x\in B} |u(x) - u_B| = \esssup_{x\in B} |u(x) - u_B|$ by~\cite[Corollary~6.13]{Mal1} since $|u - u_B| \in DX$.

Let $x \in B$ be a Lebesgue point of $u$ and set $\widetilde{B} = B(x, r)$. Similarly as in the proof of Proposition~\ref{pro:esssup_est_p=s}, we obtain that
\begin{equation}
  \label{eq:esssup-Lorentz-1}
  |u(x) - u_{\widetilde{B}}| \lesssim r \sum_{n=0}^\infty 2^{-n} \biggl(\fint_{2^{-n}\lambda\widetilde{B}} g^p\,d\mu\biggr)^{1/p}\,.
\end{equation}
Let us, for the sake of brevity, write $\tilde{g} = g \chi_{\lambda \widetilde{B}}$. The embedding $L^{p,1} \emb L^p$, whose norm is $1$, and the doubling condition give that
\begin{equation}
 \label{eq:esssup-Lorentz-2}
 \biggl(\fint_{2^{-n}\lambda\widetilde{B}} g^p\,d\mu\biggr)^{1/p} \le \frac{\|g \chi_{2^{-n}\lambda \widetilde{B}}\|_{L^{p,1}(\Pcal)}}{\meass{2^{-n}\lambda \widetilde{B}}^{1/p}} \lesssim \frac{\| \tilde{g}^* \chi_{(0, \meass{2^{-n}\lambda \widetilde{B}})}\|_{L^{p,1}(\Rbb^+)}}{\meass{2^{-n}\widetilde{B}}^{1/p}}\,.
\end{equation}
Let $I_n = \| \tilde{g}^* \chi_{(0, \meass{2^{-n}\lambda \widetilde{B}})}\|_{L^{p,1}(\Rbb^+)}$. Summation by parts allows us to write
\begin{equation}
  \label{eq:AbelsPartSum}
  \sum_{n=0}^N \frac{2^{-n}}{\meass{2^{-n}\widetilde{B}}^{1/p}} I_n = \biggl( \sum_{\vphantom{k}n=0}^{N-1} \sum_{k=0}^n \frac{2^{-k}}{\meass{2^{-k}\widetilde{B}}^{1/p}} (I_n - I_{n+1})\biggr) + \sum_{k=0}^N \frac{2^{-k}}{\meass{2^{-k}\widetilde{B}}^{1/p}} I_N
\end{equation}
for every $N>0$. Inequality \eqref{eq:dimension} yields that $2^{-k}=2^{-k}r/r \lesssim (\meass{2^{-k}\widetilde{B}}/\meass{\widetilde{B}})^{1/s}$ and that $\meass{2^{-N}\lambda \widetilde{B}} \approx \meass{2^{-N} \widetilde{B}}$. Recall also that $1\le p<s$. Then,
\begin{align}
  \notag
  \sum_{k=0}^N \frac{2^{-k}}{\meass{2^{-k}\widetilde{B}}^{1/p}} I_N & \lesssim \sum_{k=0}^N \frac{\meass{2^{-k}\widetilde{B}}^{1/s - 1/p}}{\meass{\widetilde{B}}^{1/s}} \int_0^{\meass{2^{-N}\lambda \widetilde{B}}} \tilde{g}^*(t) t^{1/p - 1}\,dt \\
  \label{eq:APS_est}
	& \lesssim \frac{1}{\meass{\widetilde{B}}^{1/s}} \sum_{k=0}^N \biggl(\frac{\meass{2^{-N}\widetilde{B}}}{\meass{2^{-k}\widetilde{B}}}\biggr)^{1/p - 1/s} \int_0^{\meass{2^{-N}\lambda \widetilde{B}}} \tilde{g}^*(t) t^{1/s - 1}\,dt \\
  & \lesssim \frac{\| \tilde{g}^* \chi_{(0, \meass{2^{-N}\lambda \widetilde{B}})}\|_{L^{s,1}(\Rbb^+)}}{\meass{\widetilde{B}}^{1/s}} \sum_{k=0}^N \bigl(2^{(k-N)\sigma}\bigr)^{1/p-1/s}\,,
	\notag
\end{align}
where the last inequality with some $\sigma \in (0, s]$ follows from \eqref{eq:dimension2}. Due to the absolute continuity of the $L^{s,1}$ norm, we see that the last sum in \eqref{eq:AbelsPartSum} tends to zero as $N\to \infty$. Therefore,
\begin{equation}
  \label{eq:After-APS1}
  \sum_{n=0}^\infty \frac{2^{-n}}{\meass{2^{-n}\widetilde{B}}^{1/p}} I_n = \sum_{\vphantom{k}n=0}^{\infty} \sum_{k=0}^n \frac{2^{-k}}{\meass{2^{-k}\widetilde{B}}^{1/p}} (I_n - I_{n+1}).
\end{equation}
Similarly as in \eqref{eq:APS_est}, we can apply \eqref{eq:dimension} and \eqref{eq:dimension2} to estimate
\begin{align*}
  \sum_{k=0}^n & \frac{2^{-k}}{\meass{2^{-k}\widetilde{B}}^{1/p}} (I_n - I_{n+1}) \\
   & \qquad
   \lesssim \frac{1}{\meass{\widetilde{B}}^{1/s}} \sum_{k=0}^n \biggl(\frac{ \meass{2^{-n}\widetilde{B}}}{\meass{2^{-k}\widetilde{B}}}\biggr)^{1/p - 1/s} \int_{\meass{2^{-n-1}\lambda \widetilde{B}}}^{\meass{2^{-n}\lambda \widetilde{B}}} \tilde{g}^*(t) t^{1/s - 1}\,dt \\
  & \qquad \lesssim \frac{1}{\meass{\widetilde{B}}^{1/s}} \sum_{k=0}^\infty 2^{-k\sigma(1/p-1/s)} \int_{\meass{2^{-n-1}\lambda \widetilde{B}}}^{\meass{2^{-n}\lambda \widetilde{B}}} \tilde{g}^*(t) t^{1/s - 1}\,dt\,.
\end{align*}
Inserting this estimate into \eqref{eq:After-APS1} yields that
\begin{equation}
  \label{eq:esssup-Lorentz-3}
  \sum_{n=0}^\infty \frac{2^{-n}}{\meass{2^{-n}\widetilde{B}}^{1/p}} I_n \lesssim \frac{1}{\meass{\widetilde{B}}^{1/s}} \int_0^{\meass{\lambda \widetilde{B}}} \tilde{g}^*(t) t^{1/s-1}\,dt = \frac{\|\tilde{g}^*\|_{L^{s,1}(\Rbb^+)}}{\meass{\widetilde{B}}^{1/s}}
\end{equation}
Combining \eqref{eq:esssup-Lorentz-1}, \eqref{eq:esssup-Lorentz-2}, and \eqref{eq:esssup-Lorentz-3} results in
\[
  |u(x) - u_{\widetilde{B}}| \lesssim \frac{r}{\meass{\widetilde{B}}^{1/s}} \|\tilde{g}\|_{L^{s,1}(\Pcal)} \approx \frac{r}{\meass{2\lambda \widetilde{B}}^{1/s}} \|g \chi_{\lambda \widetilde{B}}\|_{L^{s,1}(\Pcal)}\,.
\]
The triangle, the $p$-Poincar\'e, and the H\"older inequality, as well as the embedding $L^{s,1} \emb L^s$ provide us with the estimate
\begin{align*}
  |u_{\widetilde{B}} &- u_B|  \le |u_{\widetilde{B}} - u_{2B}| + |u_{2B} - u_B| \le \fint_{\widetilde B} |u-u_{2B}|\,d\mu + \fint_{B} |u-u_{2B}|\,d\mu \\
   & \lesssim \fint_{2B} |u-u_{2B}|\,d\mu \lesssim r \biggl(\fint_{2\lambda{B}} g^p\,d\mu\biggr)^{1/p} \le r \biggl(\fint_{2\lambda{B}} g^s\,d\mu\biggr)^{1/s}
	\lesssim r \frac{\| g \chi_{2\lambda B}\|_{L^{s,1}(\Pcal)}}{\meas{2\lambda B}^{1/s}}\,.
\end{align*}
Altogether, we see that
\begin{align*}
  |u(x) - u_B| & \le |u(x) - u_{\widetilde{B}}| + |u_{\widetilde{B}} - u_B| 
  \\
	& \lesssim r \frac{\| g \chi_{2\lambda B}\|_{L^{s,1}(\Pcal)}}{\meas{2\lambda B}^{1/s}} \le \frac{\cemb(2\lambda B) r}{\meas{2\lambda B}^{1/s}} \|g \chi_{2\lambda B}\|_X
	\le c_{B_0} \|g \chi_{2\lambda B}\|_X\,,
\end{align*}
where we applied \eqref{eq:dimension} to estimate $r/\meas{2\lambda B}^{1/s} \le R/(c_s \meas{2\lambda B_0})^{1/s}$. The Lebesgue differentiation theorem (see Heinonen~\cite[Section~1]{Hei}) now yields that a.e.\@ $x\in B$ is a Lebesgue point of $u$, whence this inequality holds for a.e.\@ $x\in B$ and thus for the essential supremum over $B$.
\end{proof}
%
%
Edmunds, Kerman and Pick~\cite{EdmKerPic} have discussed the optimal Sobolev embeddings $W^m X(\Omega) \emb Y(\Omega)$ for bounded domains $\Omega \subset \Rbb^n$. It follows from~\cite[Theorem~6.5]{EdmKerPic} that $X=L^{n,1}(\Omega)$ is the largest \ri space such that $W^1 X(\Omega) \emb L^\infty(\Omega)$. Therefore, the result we have just obtained is sharp when considering \ri spaces as the base function spaces $X$ that $\NX$ are built upon.

As mentioned earlier, we also recover an analogue of the result of Talenti~\cite{Tal} on local essential boundedness of Newtonian functions based on the Zygmund space $X=L^s(\log L)^\alpha$ with $\alpha>1/s'=1-1/s$. Namely, it follows from the embedding $L^s(\log L)^\alpha_\loc \emb L^{s,1}_\loc$, which is shown next.
%
%
\begin{lem}
\label{lem:LpLog-emb}
Let $1<p<\infty$ and suppose that $E\subset \Pcal$ is a measurable set of finite positive measure. Then, $L^p(\log L)^\alpha(E) \emb L^{p,1}(E)$ if and only if $\alpha > 1-1/p$.
\end{lem}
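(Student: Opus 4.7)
The approach is to reduce the embedding to a one-dimensional weighted inequality. Since both $L^p(\log L)^\alpha(E)$ and $L^{p,1}(E)$ are \ri spaces whose quasi-norms depend only on the decreasing rearrangement, the embedding is equivalent to
\begin{equation*}
  \int_0^{\meas{E}} f(t)\,t^{1/p-1}\,dt \lesssim \biggl(\int_0^{\meas{E}} f(t)^p \bigl(1+\log(\meas{E}/t)\bigr)^{\alpha p}\,dt\biggr)^{1/p}
\end{equation*}
holding for every non-negative decreasing $f$ on $(0, \meas{E})$.

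For the sufficient direction ($\alpha > 1 - 1/p$), I would split the integrand on the left as $\bigl[f(t)(1+\log(\meas{E}/t))^\alpha\bigr] \cdot \bigl[t^{1/p-1}(1+\log(\meas{E}/t))^{-\alpha}\bigr]$ and apply H\"older's inequality with exponents $p$ and $p' = p/(p-1)$. The first factor reproduces the $L^p(\log L)^\alpha$ quasi-norm of $f$; using the identity $(1/p-1)p' = -1$, the $p'$-th power of the second factor equals $t^{-1}(1+\log(\meas{E}/t))^{-\alpha p'}$, and the substitution $s = 1 + \log(\meas{E}/t)$ then turns its integral into $\int_1^\infty s^{-\alpha p'}\,ds$. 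This is finite exactly when $\alpha p' > 1$, i.e., when $\alpha > 1/p' = 1 - 1/p$.

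For the necessary direction, I would exhibit explicit test functions. In the easier range $\alpha < 1 - 1/p$, take $f(t) = t^{-1/p}(1+\log(\meas{E}/t))^{-1}$ (decreasing in a neighborhood of $0$, which is the only region affecting convergence; one truncates $f$ elsewhere if needed). The substitution $s = 1 + \log(\meas{E}/t)$ transforms the Zygmund quasi-norm into $(\int_1^\infty s^{\alpha p - p}\,ds)^{1/p}$, which is finite as $\alpha p - p < -1$, while the Lorentz norm becomes $\int_1^\infty s^{-1}\,ds = +\infty$.

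The borderline case $\alpha = 1 - 1/p$ is the main obstacle, since the single-logarithm scale no longer separates the norms: both exponents become exactly $-1$ and both integrals diverge simultaneously. To resolve this, I would introduce an iterated logarithm and test with
\begin{equation*}
  f(t) = t^{-1/p}\bigl(1+\log(\meas{E}/t)\bigr)^{-1}\bigl(1+\log(1+\log(\meas{E}/t))\bigr)^{-\gamma}
\end{equation*}
for some fixed $\gamma \in (1/p, 1]$. After the substitution $s = 1 + \log(\meas{E}/t)$ followed by $r = 1 + \log s$, the Zygmund integral reduces to $\int_{r_0}^\infty r^{-\gamma p}\,dr$, finite since $\gamma p > 1$, while the Lorentz integral becomes $\int_{r_0}^\infty r^{-\gamma}\,dr = +\infty$, ruling out the embedding at the endpoint.
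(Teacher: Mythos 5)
Your proof is correct, and it follows a genuinely different route from the paper. The paper invokes Stepanov's necessary-and-sufficient criterion for weighted Hardy-type inequalities on decreasing functions: it shows the embedding is equivalent to finiteness of the quantity
\[
b = \int_0^a \biggl(\frac{\int_0^t \tau^{1/p-1}\,d\tau}{\int_0^t (1+\log(a/\tau))^{\alpha p}\,d\tau}\biggr)^{1/(p-1)} t^{1/p-1}\,dt,
\]
estimates the inner integral by $t(1+\log(a/t))^{\alpha p}$ up to constants, and reduces $b$ to $\int_0^a t^{-1}(1+\log(a/t))^{-\alpha p/(p-1)}\,dt$, which converges exactly when $\alpha > 1-1/p$. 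Because Stepanov's criterion is an equivalence, both directions (including the endpoint $\alpha = 1-1/p$) fall out simultaneously from a single computation. Your argument is more elementary and self-contained: you prove sufficiency by a direct H\"older split exploiting the identity $(1/p-1)p' = -1$ (which in fact establishes the 1-D inequality for all non-negative functions, not just decreasing ones), and you prove necessity by hand-built test functions, resolving the genuinely delicate endpoint $\alpha = 1-1/p$ with a double-logarithmic perturbation. The trade-off is that your approach requires three separate cases and the extra care with the iterated logarithm at the borderline, whereas the paper's citation handles them uniformly; on the other hand, your argument avoids any external reference and makes the mechanism of failure completely explicit. One small point worth recording: as you note, the test functions are not globally decreasing on $(0, a)$; replacing them by $\tilde f(t) = f(\min(t,\epsilon))$ for a suitably small $\epsilon$ makes them decreasing without affecting the convergence or divergence of either integral, which is dominated by behavior near the origin.
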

\begin{proof}
Let $a = \meas{E}$. We want to show that
\[
  \int_0^a f(t) t^{1/p -1} \,dt \lesssim \biggl( \int_0^a f(t)^p \biggl( 1 + \log \frac{a}{t} \biggr)^{\alpha p} \,dt\biggr)^{1/p}
\]
for every non-negative decreasing $f\in \Mcal(\Rbb^+, \lambda^1)$. According to Stepanov~\cite[Proposition 1]{Ste}, this inequality holds true if and only if
\[
  b \coloneq \int_0^a \biggl(\frac{\int_0^t \tau^{1/p - 1} \,d\tau}{\int_0^t (1+\log (a/\tau))^{\alpha p}\,d\tau}\biggr)^{1/(p-1)} t^{1/p -1}\,dt < \infty\,.
\]
We have the rough estimate $\int_0^t (1+\log (a/\tau))^{\alpha p}\,d\tau \ge t (1+\log (a/t))^{\alpha p}$. Conversely,
\begin{align*}
  \int_0^t \biggl( 1 + \log \frac a\tau\biggr)^{\alpha p}\,d\tau & = \sum_{n=0}^\infty \int_{2^{-n-1}t}^{2^{-n}t} \biggl( 1 + \log \frac a\tau\biggr)^{\alpha p}\,d\tau \le \sum_{n=0}^\infty 2^{-n-1}t \biggl( 1 + \log \frac{a}{2^{-n-1}t}\biggr)^{\alpha p} \\
	& \le \sum_{n=0}^\infty 2^{-n-1} t (1+ \log 2^{n+1})^{\alpha p} \biggl( 1 + \log \frac{a}{t}\biggr)^{\alpha p} \lesssim t \biggl( 1 + \log \frac{a}{t}\biggr)^{\alpha p}\,.
\end{align*}
Therefore, we obtain that
\[
  b \approx \int_0^a \biggl(\frac{t^{1/p}}{t (1+\log (a/t))^{\alpha p}}\biggr)^{1/(p-1)} t^{1/p -1}\,dt \approx \int_0^a \frac{dt}{t (1+ \log (a/t))^{\alpha p/(p-1)}}\,.
\]
The integral on the right-hand side converges if and only if $\alpha > (p-1)/p$.
\end{proof}
\begin{cor}
\label{cor:esssup_est_p=dZygViaLor}
Assume that $\Pcal$ supports a $p$-Poincar\'e inequality with $1 \le p < s$ such that \eqref{eq:dimension} is satisfied. Suppose also that $X\emb L^s(\log L)^\alpha_\loc$ with some $\alpha > 1-1/s$. Let $B_0 \subset \Pcal$ be a fixed ball of radius $R>0$. Then, there is a constant $c_{B_0}>0$ such that for every ball $B \subset B_0$ of radius $r\in(0, R)$, we have
\[
  \mbox{$C_X$-}\esssup_{x\in B} |u(x) - u_B| \lesssim \frac{\cemb(2\lambda B) r}{\meas{2\lambda B}^{1/s}} \|g \chi_{2\lambda B}\|_X \le c_{B_0} \|g \chi_{2\lambda B}\|_X
\]
whenever $g \in X$ is an $X$-weak upper gradient of $u\in \NX$. Moreover, we can estimate $c_{B_0} \approx \cemb(2\lambda B_0) R/\meas{2 \lambda B_0}^{1/s}$.
\end{cor}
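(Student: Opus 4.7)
The plan is to derive this as a direct consequence of Proposition~\ref{pro:esssup_est_p=sLorentz} combined with Lemma~\ref{lem:LpLog-emb}. The hypotheses already give a $p$-Poincar\'e inequality for some $1 \le p < s$ with \eqref{eq:dimension}, so the only thing missing compared to Proposition~\ref{pro:esssup_est_p=sLorentz} is the embedding $X \emb L^{s,1}_\loc$. Since $p < s$ forces $s > 1$, Lemma~\ref{lem:LpLog-emb} is applicable at the exponent $s$, and the condition $\alpha > 1 - 1/s$ is precisely what is required there.

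First I would fix an arbitrary ball $B \subset \Pcal$ (of finite, positive measure). By the assumption $X \emb L^s(\log L)^\alpha_\loc$, there is a constant $c_1(B) > 0$ such that $\|f \chi_B\|_{L^s(\log L)^\alpha} \le c_1(B) \|f \chi_B\|_X$ for every $f \in \Mcal(\Pcal, \mu)$. By Lemma~\ref{lem:LpLog-emb}, applied on the finite-measure set $E = B$, there is a constant $c_2(B) > 0$ with $\|h \chi_B\|_{L^{s,1}} \le c_2(B) \|h \chi_B\|_{L^s(\log L)^\alpha}$ for every measurable $h$. Composing these two estimates yields $X \emb L^{s,1}_\loc$ with an embedding constant $\cemb(B) \le c_1(B) c_2(B)$.

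With this embedding in hand, Proposition~\ref{pro:esssup_est_p=sLorentz} applies verbatim and delivers the two displayed inequalities, together with the estimate $c_{B_0} \approx \cemb(2\lambda B_0) R/\meas{2\lambda B_0}^{1/s}$, after replacing the Lorentz norm on the right-hand side by the $X$-norm via the embedding. There is no real obstacle; the only point of care is that Lemma~\ref{lem:LpLog-emb} requires $1 < s < \infty$, which is automatic from $1 \le p < s$, and that one should track how the embedding constant $\cemb(2\lambda B)$ (resp.\@ $\cemb(2\lambda B_0)$) depends on the composition of the two embeddings, but this only affects the implicit multiplicative constants and does not alter the stated form of the inequality.
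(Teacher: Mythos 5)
Your proposal matches the paper's proof exactly: pass from $X \emb L^s(\log L)^\alpha_\loc$ to $X \emb L^{s,1}_\loc$ via Lemma~\ref{lem:LpLog-emb} (applicable since $1\le p<s$ forces $s>1$), then invoke Proposition~\ref{pro:esssup_est_p=sLorentz}. The extra detail you provide on composing the two embedding constants is correct and consistent with the paper's one-line argument.
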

%
%
Similarly as in Proposition~\ref{pro:esssup_est_p=sLorentz}, it suffices to assume that $\Pcal$ supports an $s$-Poincar\'e inequality if $\Pcal$ is complete.
\begin{proof}
By Lemma~\ref{lem:LpLog-emb}, we see that $X \emb L^{s,1}_\loc$. Then, the desired claim follows directly from Proposition~\ref{pro:esssup_est_p=sLorentz}.
\end{proof}
%
%
%
%
\section{Continuity of Newtonian functions}
\label{sec:continuity}
In Sobolev spaces in $\Rbb^n$, one may deduce that there exist continuous representatives if the $L^\infty$ norm of a function is (locally) controlled by the quasi-norm of its gradient. A similar result, yet somewhat stronger, can be obtained in Newtonian spaces as well. Namely, it suffices in the Newtonian case to redefine a function on a set of capacity zero to obtain the continuous representative. If the metric measure space is in addition locally compact, then all representatives are continuous.

We assume in this (as well as in the previous) section that $\mu$ is a \emph{doubling measure}. Moreover, $\mu$ is non-atomic since $\Pcal$ is connected, which follows from the Poincar\'e inequalities we will assume.

The following theorem is a refinement of Haj\l{}asz and Koskela~\cite[Theorem~5.1]{HajKos}, where a $p$-Poincar\'e inequality was used to show that there exist $(1-s/p)$-H\"older continuous representatives (with equality a.e.\@) whenever the upper gradient lies in $L^p$ and $p>s$.
The case when the degree of summability of an upper gradient 
is essentially equal to $s$ needs to be discussed using a finer scale of function spaces.

In $\Rbb^n$, Kauhanen, Koskela, and Mal\'y~\cite{KauKosMal} have shown that it suffices that the gradient lies in the Lorentz space $L^{n,1}_\loc$ to conclude that there are continuous representatives. Romanov~\cite{Rom} extended this result to Sobolev-type spaces on complete metric measure spaces with an Ahlfors $s$-regular measure (i.e., $\meas{B(x,r)} \approx r^s$ for $r<2\diam \Pcal$). The Ahlfors $s$-regularity of the measure is a very strong requirement that fails even in $(\Rbb^n, w(x)\,dx)$ unless $w(x)\approx 1$. Besides, he did not work with Newtonian spaces as such, but solely with Poincar\'e inequalities. His result can be recovered as a special case of \ref{it:p=s-cont-Lorentz} in the following theorem.
\begin{thm}
\label{thm:noncompl-cont}
Suppose that $\Pcal$ supports a $p$-Poincar\'e inequality and let $s$ be given by~\eqref{eq:dimension}. Suppose that one of the following sets of assumptions is satisfied:
\begin{enumerate}
	\item \label{it:noncompl-cont-Holder} $p> s$ and $X \emb L^p_\loc\fcrim$;
	\item $1=p=s$ and $X \emb L^s (\log L)^\alpha_\loc\fcrim$ for some $\alpha \ge 1$;
	\item $1<p=s$ and $X \emb L^s (\log L)^\alpha_\loc\fcrim$ for some $\alpha > 1$;
	\item $1\le p < s$ and $X \emb L^{s,1}_\loc$\,.\label{it:p=s-cont-Lorentz}
\end{enumerate}
Then, for every function $u \in \NX$, there is $v \in \NX \cap \Ccal(\Pcal)$ such that $u=v$ $C_X$-quasi-everywhere. Moreover, $v$ is locally $(1-s/p)$-H\"older continuous in the case \ref{it:noncompl-cont-Holder}.
\end{thm}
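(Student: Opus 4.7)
The plan is to build the continuous representative pointwise as the limit $v(x) \coloneq \lim_{r \to 0^+} u_{B(x,r)}$ of integral averages, and then to upgrade the resulting a.e.\@ equality $u = v$ to q.e.\@ equality. The driving input is the $C_X$-essential oscillation bound already available in this section: Proposition~\ref{pro:esssup_est_p>s} handles case~(a) with the key decay factor $r^{1-s/p}$, while Proposition~\ref{pro:esssup_est_p=s}, Proposition~\ref{pro:esssup_est_p=sLorentz}, and Corollary~\ref{cor:esssup_est_p=dZygViaLor} cover the borderline cases~(b)--(d), producing, after passing through the embedding, an estimate of the form
\[
  \mbox{$C_X$-}\esssup_{y \in B} |u(y) - u_B| \lesssim c_{B_0} \|g \chi_{2\lambda B}\|_Y,
\]
where $Y \in \{L^p, L^s(\log L)^\alpha, L^{s,1}\}$ is a \ri Banach function space. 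Crucially, in all four cases $Y$ has absolutely continuous norm, a feature that will be indispensable below.

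I would first verify, via the dyadic telescoping computation that underlies the proof of Proposition~\ref{pro:esssup_est_p=s} (now read at a fixed point rather than as an essential supremum), that the sequence $\{u_{B(x, 2^{-n} r_0)}\}_n$ is Cauchy for every $x \in \Pcal$, so that $v(x)$ is well defined everywhere. The Lebesgue differentiation theorem---which is available since $\mu$ is doubling---then identifies $v = u$ a.e. For continuity of $v$ on a fixed ball $B_0$, given $x, y \in B_0$ with $r = \dd(x,y)$, I split
\[
  |v(x) - v(y)| \le |v(x) - u_{B(x,2r)}| + |u_{B(x,2r)} - u_{B(y,2r)}| + |u_{B(y,2r)} - v(y)|,
\]
estimating the outer terms by dyadic oscillation sums around $x$ and $y$, and the middle term by a single application of the $p$-Poincar\'e inequality on $B(x, 4r)$. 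In case~(a) this yields a bound proportional to $r^{1-s/p} \|g\|_Y$, giving local $(1-s/p)$-H\"older continuity; in the borderline cases the bound is proportional to $\|g \chi_{2\lambda B(x, 4r)}\|_Y$, which tends to zero uniformly in $x \in B_0$ by the absolute continuity of the $Y$-norm of $g$ restricted to any slightly enlarged ambient ball.

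To pass from $v = u$ a.e.\@ to $v = u$ q.e., I first need $v \in \NX$. The quasi-norm bound $\|v\|_X = \|u\|_X$ is immediate. For the upper gradient, I claim that the $X$-weak upper gradient $g$ of $u$ is also an $X$-weak upper gradient of $v$: for $\Mod_X$-a.e.\@ curve $\gamma$, the composition $u \circ \gamma$ is absolutely continuous (by~\cite[Corollary~5.9]{Mal1}) and the $\mu$-null set $\{u \neq v\}$ is met by $\gamma$ in zero $\lambda^1$-length (by~\cite[Lemma~4.9]{Mal1}); since $u \circ \gamma$ and $v \circ \gamma$ are both continuous on $[0, l_\gamma]$ and coincide on a dense subset, they coincide throughout, and the upper-gradient inequality for $u$ along $\gamma$ transfers to $v$. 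Thus $v \in \NX$ with $u - v \equiv 0$ a.e., and \cite[Proposition~6.15 and Corollary~6.16]{Mal1} give $u = v$ q.e.

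The most delicate step I foresee is the uniform (in $x$) vanishing of $\|g \chi_{2\lambda B(x, r)}\|_Y$ as $r \to 0$ in the borderline cases, which is what promotes pointwise convergence of $v$ at each $x$ to genuine continuity on $B_0$; it reduces, after fixing a slightly enlarged ambient ball, to absolute continuity of the $Y$-norm of a single fixed function in $Y$, which is standard but needs to be stated carefully to remain uniform in the center~$x$.
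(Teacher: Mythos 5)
Your approach is correct, and it is a genuinely different route from the paper's. The paper never constructs the Lebesgue-point limit $v(x) = \lim_{r\to 0^+} u_{B(x,r)}$. Instead it builds a countable exceptional set
\[
  E = \bigcup_{i=1}^\infty \bigcup_{q\in\Qbb^+}
  \Bigl\{x \in B(z_i, q): |u(x) - u_{B(z_i, q)}| >
  \mathop{\mbox{$C_X$-}\esssup}_{w \in B(z_i, q)} |u(w) - u_{B(z_i, q)}|\Bigr\}
\]
(with $\{z_i\}$ dense) of zero capacity, shows that $u|_{B_n\setminus E}$ is uniformly continuous (H\"older in case~\ref{it:noncompl-cont-Holder}) by squeezing any two points $x,y \in B_n\setminus E$ into one well-chosen ball $B(z_i, q)$ with $q \approx \dd(x,y)$ and invoking the oscillation estimates, and then extends $u|_{B_n\setminus E}$ by density to a continuous $v_n$ on $B_n$ and pastes. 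Crucially, because $E$ has zero capacity, equality $u = v$ q.e.\@ is built in from the start, and the paper never needs to argue that $v$ has the same weak upper gradient as $u$.

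Your construction front-loads the analytic work differently. You obtain $v$ everywhere from the Cauchy property of $\{u_{B(x,2^{-n}r_0)}\}_n$ (which, as you note, follows from the telescoping estimates underlying Propositions~\ref{pro:esssup_est_p>s}--\ref{pro:esssup_est_p=sLorentz} and needs no Lebesgue-point assumption), get $v = u$ a.e.\@ by Lebesgue differentiation, and then must separately upgrade a.e.\@ to q.e. Your argument for this upgrade is sound: for $\Mod_X$-a.e.\@ curve $\gamma$, $u\circ\gamma$ is absolutely continuous, $v\circ\gamma$ is continuous, and they agree on a set of full $\lambda^1$-measure (by \cite[Lemma~4.9]{Mal1} applied to the $\mu$-null set $\{u\ne v\}$), hence everywhere; so $0$ is an $X$-weak upper gradient of $u-v$, giving $\|u-v\|_{\NX}=0$ and therefore $u=v$ q.e.\@ by \cite[Corollary~6.16]{Mal1}. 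The trade is roughly this: the paper's route avoids having to discuss upper gradients of the newly built $v$ at all, at the price of the slightly delicate exceptional-set bookkeeping; your route is cleaner in that $v$ is defined by a uniform formula everywhere, but it leans harder on the absolute-continuity-along-curves machinery of \cite{Mal1}.

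Two small points worth making precise if you write this out. First, for the uniform-in-$x$ vanishing of $\|g\chi_{2\lambda B(x,4r)}\|_Y$ in the borderline cases, the paper handles the dependence on the center $x$ by replacing $\chi_{2\lambda B}$ with $\chi_{E_B}$, where $E_B\subset 2\lambda\widetilde{B}$ is the ``super-level set'' of $g$ of prescribed measure $\mu(2\lambda B)$; since $Y$ is rearrangement-invariant, $\|g\chi_{2\lambda B}\|_Y \le \|g\chi_{E_B}\|_Y$, and $E_B$ depends only on $\mu(2\lambda B)$, not on the location. Your $\eps$--$\delta$ formulation of absolute continuity is equivalent (a short contradiction argument with $F_n = \bigcup_{k\ge n}E_k$), but the rearrangement device is what makes the uniformity in $x$ transparent; you should make the reduction explicit. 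Second, the estimate $\mu(2\lambda B(x,4r)) \lesssim r^\sigma$ (uniformly for $x\in B_0$) that feeds this $\eps$--$\delta$ argument uses the connectivity bound \eqref{eq:dimension2}, which is available because the Poincar\'e inequality forces $\Pcal$ to be connected; this is worth flagging since \eqref{eq:dimension} alone would not give it.
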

\begin{proof}
Let us exhaust $\Pcal = \bigcup_{n=1}^\infty B_n$, where $B_n = B(x_0, n)$ for an arbitrary point $x_0 \in \Pcal$. Next, we will find an exceptional set $E$ with $C_X(E) = 0$, where we lack control over the oscillation of $u \in \NX$.
Let $D=\{z_i \in \Pcal: i\in\Nbb\}$ be a dense subset of $\Pcal$ and let
\[
  E = \bigcup_{i=1\vphantom{\Qbb^+}}^\infty \bigcup_{\:q\in\Qbb^+} \biggl\{x \in B(z_i, q): |u(x) - u_{B(z_i, q)}| > \mathop{\mbox{$C_X$-}\esssup}_{ w \in B(z_i, q)} |u(w) - u_{B(z_i, q)}|\biggr\}\,.
\]
Since $E$ is a countable union of sets of capacity zero, it has capacity zero as well.

Let us now fix a ball $\widetilde{B} \coloneq 4 B_n = B_{4n}$ for an arbitrary $n \in \Nbb$. For every pair of points~$x,y \in B_n\setminus E$, we can find $z \in D$ and $r \in \Qbb^+$ such that $\dd(x,y)/2 \le r \le 2\dd(x,y)$ and $x,y \in B \coloneq B(z, r) \subset \widetilde{B}$.

In the case \ref{it:noncompl-cont-Holder}, we use \eqref{eq:CX-esssup_est_p>s} to obtain that
\begin{align*}
  |u(x) - u(y)| & \le 2 \mbox{$C_X$-}\esssup_{w \in B} |u(w) - u_{B}| \le c_{\widetilde{B}}\, r^{1-s/p} \|g_u \chi_{2 \lambda B}\|_X \\
	& \le c_{\widetilde{B}} (2\dd(x,y))^{1-s/p} \|g_u\|_X\,.
\end{align*}
Therefore, $u|_{B_n \setminus E}$ is $(1-s/p)$-H\"older continuous and hence uniformly continuous. Since $C_X(E \cap B_n) = 0$, every point of $E \cap B_n$ is an accumulation point of $B_n \setminus E$, whence there is a unique continuation $v_n\in \Ccal(B_n)$ of $u|_{B_n \setminus E}$. Moreover, $v_n$ retains the H\"older continuity.

Let us now focus on the remaining three cases.
Respective to the assumptions, let $Y=Y(\Pcal)$ be either the Zygmund space $L^s(\log L)^\alpha(\Pcal)$ with norm given by \eqref{eq:LpLog-eqnorm}, or the Lorentz space $L^{s,1}(\Pcal)$. Since $|u - u_{B}| \in DX \subset DY_\loc$, we obtain from~\cite[Corollary~6.13]{Mal1} that
\[
 C_X\mbox{-}\esssup_{w \in B} |u(w) - u_{B}| = \esssup_{w \in B} |u(w) - u_{B}| = C_Y\mbox{-}\esssup_{w \in B} |u(w) - u_{B}|\,.
\]
Applying \eqref{eq:CX-esssup_est_p=d_Zyg} or \eqref{eq:CX-esssup_est_p=d_Lor} for the function space $Y$, we can find $c_{\widetilde{B}}>0$ such that
\begin{align*}
  |u(x) - u(y)| & \le 2 C_Y\mbox{-}\esssup_{w \in B} |u(w) - u_{B}| \le c_{\widetilde{B}} \|g_u \chi_{2\lambda B}\|_Y \le c_{\widetilde{B}} \|g_u \chi_{E_B}\|_Y\,,
\end{align*}
where $E_B \subset 2\lambda \widetilde{B}$ with $\meas{E_{B}} = \meas{2\lambda B}$ and $g_u(w) \ge g_u(v)$ for all $w \in E_B$ and $v \in 2\lambda \widetilde{B} \setminus E_B$. Note that the set $E_B$ does not depend on the exact choice of $B$, but merely on the measure of $2\lambda B$.
Given an $\eps>0$, we can find $a>0$ such that  $c_{\widetilde{B}} \|g_u \chi_{E_B}\|_Y < \eps$ whenever $\meas{E_B} < a$ since $Y$ has absolutely continuous norm.
As $\Pcal$ is connected, we have $\meas{2\lambda B} \le C_{2\lambda \swidetilde{B}} \dd(x,y)^\sigma$, where $\sigma \in (0, s]$ is from \eqref{eq:dimension2}.
If $\dd(x,y) < (a / C_{2\lambda\swidetilde{B}})^{1/\sigma}$, then $\meas{E_{B}} = \meas{2\lambda B} < a$. Thus, $c_{\widetilde{B}} \|g_u \chi_{E_B}\|_Y < \eps$ and hence $|u(x) - u(y)| < \eps$.

This way, we have just shown uniform continuity of $u|_{B_n \setminus E}$. Thus, there is a unique continuation $v_n\in \Ccal(B_n)$ of $u|_{B_n \setminus E}$. 

We have thus proven that in all the cases \ref{it:noncompl-cont-Holder}--\ref{it:p=s-cont-Lorentz} there is a unique continuous extension $v_n$ of $u|_{B_n \setminus E}$ for every ball $B_n$, $n\in\Nbb$. Now, we define $v$ on $\Pcal$ by setting $v(x)=v_n(x)$ whenever $x\in B_n$. Then, $v\in\Ccal(\Pcal)$ and $v=u$ outside of $E$, i.e., $C_X$-quasi-everywhere. Furthermore, $v\in\Ccal^{0,1-s/p}_{\loc}(\Pcal)$ in the case \ref{it:noncompl-cont-Holder}.
\end{proof}
Several qualitative properties of the Sobolev capacity have been discussed in Section~\ref{sec:quasicontinuity}, where one of the crucial assumptions was density of continuous functions. Now, we have shown that under certain hypotheses, all Newtonian functions have continuous representatives, whence the continuous functions are dense. Thus, we may formulate the following corollary.
\begin{cor}
Suppose that $\Pcal$ is locally compact and $X$ has the Vitali--Carath\'eodory property \eqref{eq:VitaliCaratheodory}. In particular, it suffices to assume that $\chi_B \in X$ for every bounded set $B\subset \Pcal$ and it satisfies \ref{df:AC}. Suppose further that the hypotheses of Theorem~\ref{thm:noncompl-cont} are satisfied. Then, $\widetilde{C}_{X,r}$ is an outer capacity if $X$ is $r$-normed. In particular, $C_X$ is an outer capacity if $X$ is normed.
\end{cor}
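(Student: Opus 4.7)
The plan is to chain together three previously established results in the paper. First, I would invoke Theorem~\ref{thm:noncompl-cont} to conclude that under the given hypotheses every $u\in \NX$ admits a representative $v\in \NX\cap \Ccal(\Pcal)$ agreeing with $u$ $C_X$-quasi-everywhere, and in particular $\mu$-a.e. Since $\|u-v\|_{\NX}=0$ for such a pair (as two Newtonian functions equal q.e.\@ have vanishing Newtonian seminorm distance, cf.\@ the quoted \cite[Corollary~6.16]{Mal1}), this shows that continuous functions are not merely dense but in fact exhaust $\NtX$; in any event, continuous functions are dense in $\NX$.

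Second, with density of continuous functions in hand together with the standing hypotheses that $\Pcal$ is locally compact and $X$ has the Vitali--Carath\'eodory property, Corollary~\ref{cor:newt.fcns-are-qcont} applies and yields that every $u\in \NX$ is quasi-continuous. (The parenthetical remark in the corollary statement explains why it suffices to assume $\chi_B\in X$ with \ref{df:AC} for bounded $B\subset \Pcal$, via Proposition~\ref{pro:VitaliCarath}.)

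Finally, once all Newtonian functions are known to be quasi-continuous, Proposition~\ref{pro:C_X-outer} immediately gives that $\widetilde{C}_{X,r}$ is an outer capacity. In the special case when $X$ is normed, one has $C_X=\widetilde{C}_{X,1}$ as noted after Definition~\ref{df:capacity}, so $C_X$ is itself an outer capacity. No additional obstacle is expected: the proof is purely a matter of citing the right chain of implications, and the main point is simply to verify that the hypotheses of Theorem~\ref{thm:noncompl-cont} feed correctly into those of Corollary~\ref{cor:newt.fcns-are-qcont}, which in turn feeds into Proposition~\ref{pro:C_X-outer}.
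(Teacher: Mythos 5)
Your proposal is correct and follows exactly the chain used in the paper: Theorem~\ref{thm:noncompl-cont} gives continuous representatives and hence density of continuous functions, Corollary~\ref{cor:newt.fcns-are-qcont} then gives quasi-continuity of all Newtonian functions, and Proposition~\ref{pro:C_X-outer} yields that $\widetilde{C}_{X,r}$ (and $C_X$ when $X$ is normed) is an outer capacity. The extra observation that continuous representatives exhaust $\NtX$ is accurate but not needed, since density alone suffices.
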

\begin{proof}
Proposition~\ref{pro:VitaliCarath} yields that $X$ has the Vitali--Carath\'eodory property under the given assumptions on $\chi_B$ for bounded sets $B\subset \Pcal$. By Theorem~\ref{thm:noncompl-cont}, every Newtonian function has a continuous representative. Hence, continuous functions are dense in $\NX$. By Corollary~\ref{cor:newt.fcns-are-qcont}, every Newtonian function is quasi-continuous. Finally, it follows from Proposition~\ref{pro:C_X-outer} that $\widetilde{C}_{X,r}$ and $C_X$ are outer capacities.
\end{proof}
We are ready to apply Propositions~\ref{pro:esssup_est_p>s}, \ref{pro:esssup_est_p=s}, and~\ref{pro:esssup_est_p=sLorentz} to find a lower bound for the Sobolev capacity of a subset of a ball, in terms of the measure and radius of the ball, if we know beforehand that the set has non-zero capacity.
%
%
\begin{pro}
\label{pro:cap_estimate_below}
Suppose that $\Pcal$ supports a $p$-Poincar\'e inequality and let $s$ be given by \eqref{eq:dimension}. Suppose that one of the following sets of assumptions is satisfied:
\begin{enumerate}
	\item $1\le s<p\le q$ and $X \emb L^q_\loc$;
	\item $1=p=s\eqcolon q$ and $X \emb L^s (\log L)^\alpha_\loc\fcrim$ for some $\alpha \ge 1$;
	\item $1<p=s\eqcolon q$ and $X \emb L^s (\log L)^\alpha_\loc\fcrim$ for some $\alpha > 1$;
	\item $1\le p < s \eqcolon q$ and $X \emb L^{s,1}_\loc$\,.
\end{enumerate}
Let $B \subset \Pcal$ be a ball with radius $r>0$. Then, for every $E\subset B$ with $C_X(E)>0$, we can estimate
\[
  C_X(E) \gtrsim \frac{\meas{2\lambda B}^{1/q}}{\cemb(2\lambda B) (r+1)}.
\]
In particular, this estimate holds if $\meas{E}>0$.
\end{pro}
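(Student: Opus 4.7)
The plan is to estimate $\|u\|_X + \|g\|_X$ from below by $\meas{2\lambda B}^{1/q}/(\cemb(2\lambda B)(r+1))$ for every $u \in \NX$ with $\chi_E \le u \le 1$ and every upper gradient $g \in X$ of $u$ (such a normalization is permissible by~\cite[Proposition~3.2]{Mal1}), then take the infimum defining $C_X(E)$.

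The key ingredient is a unified oscillation estimate
\[
  \mbox{$C_X$-}\esssup_{x \in B} |u(x)-u_B| \lesssim \frac{\cemb(2\lambda B)\, r}{\meas{2\lambda B}^{1/q}}\,\|g\chi_{2\lambda B}\|_X.
\]
In cases (b) and (c) this is Proposition~\ref{pro:esssup_est_p=s} with $q = s$, and in case (d) it is Proposition~\ref{pro:esssup_est_p=sLorentz} with $q = s$. In case (a), Proposition~\ref{pro:esssup_est_p>s} furnishes the same bound with $1/p$ in place of $1/q$; since $p \le q$, H\"older's inequality on $2\lambda B$ upgrades the hypothesis $X \emb L^q_{\loc}$ to $X\chi_{2\lambda B} \emb L^p(2\lambda B)$ with norm at most $\cemb(2\lambda B)\meas{2\lambda B}^{1/p - 1/q}$, and this extra factor exactly absorbs the discrepancy between $1/p$ and $1/q$.

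Having the oscillation estimate, I split on the value of $u_B$. If $u_B \ge 1/2$, then $\meas{B}/2 \le \int_B u\,d\mu \le \meas{B}^{1-1/q}\cemb(B)\|u\|_X$ by H\"older combined with $X \emb L^q_{\loc}$ (using the local embeddings $L^s(\log L)^\alpha \emb L^s$ and $L^{s,1} \emb L^s$ in cases (b)--(d), with constants depending only on $s$ and $\alpha$), whence $\|u\|_X \gtrsim \meas{2\lambda B}^{1/q}/\cemb(2\lambda B)$ by the doubling condition and monotonicity of $\cemb$. If instead $u_B < 1/2$, then $u \ge 1$ on $E$ together with $C_X(E) > 0$ forces $\mbox{$C_X$-}\esssup_B u \ge 1$ (since $\{u > 1-\eps\} \cap B \supset E$ has positive $X$-capacity for every $\eps > 0$), so $\mbox{$C_X$-}\esssup_B |u - u_B| \ge 1/2$ and the oscillation estimate yields $\|g\|_X \gtrsim \meas{2\lambda B}^{1/q}/(\cemb(2\lambda B)\,r)$. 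In either alternative $\|u\|_X + \|g\|_X \gtrsim \meas{2\lambda B}^{1/q}/(\cemb(2\lambda B)\max\{1,r\})$, and $\max\{1,r\} \approx r + 1$ concludes the lower bound. The ``in particular'' assertion follows because $\meas{E} > 0$ forces $C_X(E) > 0$ (as $C_X(F) = 0$ implies $\meas{F} = 0$).

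The main bookkeeping obstacle is case (a), where the mismatch between the Poincar\'e exponent $p$ and the integrability exponent $q$ has to be reconciled by H\"older's inequality so that both the oscillation estimate and the $\|u\|_X$ lower bound end up written with the exponent $1/q$ of $\meas{2\lambda B}$ and with the embedding constant $\cemb(2\lambda B)$ naturally associated with $X \emb L^q_{\loc}$.
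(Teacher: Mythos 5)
Your proof is correct and follows essentially the same route as the paper's: both rest on the oscillation estimates of Propositions~\ref{pro:esssup_est_p>s}, \ref{pro:esssup_est_p=s}, and \ref{pro:esssup_est_p=sLorentz} together with a H\"older bound on $\fint_B u$. The only stylistic difference is that the paper combines the two contributions at once via $1 = |1-u_B| + u_B$ (valid since $u_B \le 1$), whereas you split on $u_B \gtrless 1/2$ to isolate the $\|u\|_X$ and $\|g\|_X$ lower bounds separately; your explicit handling of case (a), where $1/p$ must be upgraded to $1/q$ by H\"older, and your explicit use of $C_X(E)>0$ to get $C_X\mbox{-}\esssup_B u \ge 1$ are both correct and arguably spelled out more carefully than in the original.
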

%
%
\begin{proof}
Let $u \in \NX$ be such that $\chi_E \le u \le 1$ in $\Pcal$ and let $g_u \in X$ be a minimal $X$-weak upper gradient of $u$. Propositions~\ref{pro:esssup_est_p>s}, \ref{pro:esssup_est_p=s}, and~\ref{pro:esssup_est_p=sLorentz} and the H\"older inequality then yield
\begin{align*}
  1 & \le | 1 - u_B | + u_B = C_X\mbox{-}\esssup_{x\in B} |u(x) - u_B| + \fint_B u\,d\mu \\ 
  & \lesssim \frac{\cemb(2\lambda B) r}{\meas{2\lambda B}^{1/q}} \|g_u \chi_{2\lambda B}\|_X  +  \frac{\|u \chi_{B}\|_{L^q}}{\meas{B}^{1/q}} \lesssim \frac{\cemb(2\lambda B)(r+1) \|u\|_\NX}{\meas{2\lambda B}^{1/q}}.
\end{align*}
Taking infimum over all such functions $u\in \NX$, we obtain
\[
  C_X(E) = \inf \|u\|_\NX \gtrsim \frac{\meas{2\lambda B}^{1/q}}{\cemb(2\lambda B) (r+1)}\,.
  \qedhere
\]
\end{proof}
%
%
If $X$ is an \ri space, then it is possible to find an estimate of the capacity expressed using the fundamental function of $X$, provided that the integral means in $L^p$ can be suitably rescaled to the norm means in $X$.
\begin{cor}
\label{cor:cap_estimate_below_p>s}
Assume that $\Pcal$ supports a $p$-Poincar\'e inequality with $p > s$ such that~\eqref{eq:dimension} is satisfied. Let $B \subset \Pcal$ be a ball with radius $r>0$. Suppose further that $X$ is an \ri space with fundamental function $\phi$ such that $c_\phi(\Pcal) < \infty$, where $c_\phi$ is defined by~\eqref{eq:MX-Lp-emb_norm}, see Corollary~\ref{cor:esssup_est_p>s}. Then, for every $E\subset B$ with $C_X(E)>0$, we can estimate
\[
  C_X(E) \gtrsim \frac{\phi(\meas{2\lambda B})}{r+1}\,.
\]
In particular, this estimate holds if $\meas{E}>0$.
\end{cor}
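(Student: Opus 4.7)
The plan is to reproduce the argument of Proposition~\ref{pro:cap_estimate_below} with the $L^q$-based oscillation estimate replaced by the fundamental-function estimate from Corollary~\ref{cor:esssup_est_p>s}. I will fix an admissible $u \in \NX$ with $\chi_E \le u \le 1$, let $g_u$ denote its minimal $X$-weak upper gradient, and observe that because $u = 1$ on $E$ and $C_X(E)>0$, one has $|1-u_B|\le \mbox{$C_X$-}\esssup_{x\in B}|u(x)-u_B|$; the triangle inequality then gives
\[
1 \le \mbox{$C_X$-}\esssup_{x\in B}|u(x)-u_B| + u_B.
\]

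The first summand on the right is controlled by Corollary~\ref{cor:esssup_est_p>s} applied with $\Omega=\Pcal$, the hypothesis $c_\phi(\Pcal)<\infty$ being precisely what that corollary requires. For the second summand, I plan to invoke the H\"older inequality in the \ri Banach function space $X$ against its associate space $X'$, for which the classical identity $\phi_X(t)\phi_{X'}(t)=t$ from Bennett and Sharpley~\cite{BenSha} yields $\|\chi_B\|_{X'} = \meas{B}/\phi(\meas{B})$ and hence $u_B \le \|u\chi_B\|_X/\phi(\meas{B})$. The quasi-concavity of the fundamental function together with the doubling of $\mu$ then upgrades $\phi(\meas{B})$ to the comparable quantity $\phi(\meas{2\lambda B})$. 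Combining both bounds and using the triangle inequality for $\|\cdot\|_\NX$ leads to
\[
1 \lesssim \frac{(r+1)\|u\|_\NX}{\phi(\meas{2\lambda B})},
\]
and the desired estimate follows upon taking the infimum over admissible $u$. The final assertion of the corollary is then immediate, since every set of positive $\mu$-measure has positive $X$-capacity.

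The only step requiring more than bookkeeping is the control of the mean $u_B$: it rests on two standard but not-yet-quoted facts about \ri spaces, namely the identity $\phi_X\phi_{X'}=\mathrm{id}$ for fundamental functions and the quasi-concavity of $\phi_X$, both of which can be cited from~\cite{BenSha}. Once these are invoked, the remainder is a routine adaptation of the proof of Proposition~\ref{pro:cap_estimate_below}.
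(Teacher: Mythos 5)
Your proof is correct, but it departs from the route the paper actually takes. The paper's proof is a two-line remark: it observes that the chain of estimates in the proof of Corollary~\ref{cor:esssup_est_p>s} shows $\cemb(2\lambda B)\le c_\phi(\Pcal)\,\meas{2\lambda B}^{1/p}/\phi(\meas{2\lambda B})$, and then substitutes this bound into the conclusion of Proposition~\ref{pro:cap_estimate_below} with $q=p$; the two powers $\meas{2\lambda B}^{1/p}$ cancel and the stated estimate drops out. You instead re-run the oscillation-plus-mean argument of Proposition~\ref{pro:cap_estimate_below} from scratch, replacing the $L^q$-H\"older step for $u_B$ by H\"older duality in $X$ against the associate space $X'$, together with the identity $\phi_X\phi_{X'}=\mathrm{id}$ and quasi-concavity of $\phi$. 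Both routes are valid and yield comparable constants. The paper's route is shorter and requires no ingredients beyond what has already appeared; yours is self-contained and has the aesthetic advantage that every term in the displayed inequality $1 \lesssim (r+1)\|u\|_\NX/\phi(\meas{2\lambda B})$ is expressed directly through the fundamental function, at the cost of invoking the $X$--$X'$ machinery from Bennett--Sharpley that the paper otherwise avoids. One small point worth making explicit in your write-up: after $u_B\le\|u\chi_B\|_X/\phi(\meas{B})$ you need $\phi(\meas{2\lambda B})\lesssim\phi(\meas{B})$ (not just $\phi(\meas{B})\le\phi(\meas{2\lambda B})$), which is exactly where the decrease of $\phi(t)/t$ and the doubling condition enter; you gesture at this but should state the direction of the inequality that matters.
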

\begin{proof}
In principle, it was shown in the proof of Corollary~\ref{cor:esssup_est_p>s} that 
\[
  \cemb(2\lambda B) \le c_\phi(\Pcal) \meas{2\lambda B}^{1/p} / \phi(\meas{2\lambda B}).
\]
The desired result follows from Proposition~\ref{pro:cap_estimate_below}.
\end{proof}
%
%
Recall that the natural equivalence classes in $\NX$ are given by equality outside of sets of capacity zero. Therefore, in order to be able to prove that all Newtonian functions in a locally compact doubling Poincar\'e space are continuous if the summability of the upper gradients (in terms of $\|\cdot\|_X$) is sufficiently high, we need to show that singletons have positive capacity.  To that end, we will apply the outer regularity of the capacity on zero sets.
\begin{pro}
\label{pro:singleton-cap}
Assume that $\Pcal$ is locally compact and supports a $p$-Poincar\'e inequality and let $s$ be given by \eqref{eq:dimension}. Suppose that one of the following sets of assumptions is satisfied:
\begin{enumerate}
	\item $1\le s<p\le q$ and $X \emb L^q_\loc$;
	\item $1=p=s\eqcolon q$ and $X \emb L^s (\log L)^\alpha_\loc\fcrim$ for some $\alpha \ge 1$;
	\item $1<p=s\eqcolon q$ and $X \emb L^s (\log L)^\alpha_\loc\fcrim$ for some $\alpha > 1$;
	\item $1\le p < s \eqcolon q$ and $X \emb L^{s,1}_\loc$\,.
\end{enumerate}
Then, for every ball $B \subset \Pcal$ of radius $r>0$ and every $x \in B$, we have $C_X(\{x\}) \gtrsim \meas{B}^{1/q}/(r+1) > 0$.
\end{pro}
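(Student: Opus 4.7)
The plan is to reduce the proposition to a direct invocation of Proposition~\ref{pro:cap_estimate_below} applied with $E=\{x\}\subset B$. Once the positivity $C_X(\{x\})>0$ has been secured, Proposition~\ref{pro:cap_estimate_below} immediately yields
\[
  C_X(\{x\}) \gtrsim \frac{\meas{2\lambda B}^{1/q}}{\cemb(2\lambda B)(r+1)} \gtrsim \frac{\meas{B}^{1/q}}{r+1}
\]
after invoking the doubling condition to compare $\meas{2\lambda B}$ with $\meas{B}$ and absorbing $\cemb(2\lambda B)$ into the implicit constant; the positivity assertion in the proposition statement then follows trivially.

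To verify $C_X(\{x\})>0$, I would rely on continuous representatives. Under the stated hypotheses, Theorem~\ref{thm:noncompl-cont} produces, for each admissible $u\in\NX$ with $0\le u\le 1$ and $u(x)\ge 1$, a continuous representative $v\in\NX$ with $v=u$ $C_X$-q.e.\ and $\|v\|_{\NX}=\|u\|_{\NX}$. Continuity of $v$ on the open ball $B$ upgrades the $C_X$-essential supremum to a pointwise supremum (via~\cite[Corollary~6.13]{Mal1}), so the pertinent essential-supremum estimate from Section~\ref{sec:boundedness} (Proposition~\ref{pro:esssup_est_p>s}, \ref{pro:esssup_est_p=s}, \ref{pro:esssup_est_p=sLorentz}, or Corollary~\ref{cor:esssup_est_p=dZygViaLor}, according to the case), combined with the H\"older-type control $|v_B|\le\fint_B|v|\,d\mu\lesssim \cemb(B)\|v\|_X/\meas{B}^{1/q}$ coming from $X\emb L^q_\loc$, yields the pointwise estimate
\[
  v(x) \le \sup_{B} v \le |v_B| + \sup_{B} |v-v_B| \lesssim \frac{\cemb(2\lambda B)(r+1)}{\meas{2\lambda B}^{1/q}}\, \|v\|_{\NX}.
\]
Provided that $v(x)\ge 1$, rearranging this inequality and taking the infimum over all admissible $u$ produces the desired lower bound.

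The main obstacle is justifying $v(x)\ge 1$. This would be automatic once $C_X(\{x\})>0$, since then $x$ cannot lie in the zero-capacity exceptional set $\{u\ne v\}$, but that is precisely what we wish to prove. I would break the apparent circularity by a dichotomy-plus-contradiction argument: either $v(x)\ge 1$ for every admissible $u$ (whence the displayed bound yields the conclusion), or there exists an admissible $u$ with $v(x)<1$, forcing $\{x\}\subset\{u\ne v\}$ and hence $C_X(\{x\})=0$. To rule out the degenerate case, observe that $C_X(\{x\})=0$ makes $\Mod_X$-a.e.\ curve avoid $\{x\}$, so arbitrary pointwise modifications at $x$ preserve membership in $\NX$ with the same quasi-norm. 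Picking a sequence $u_n\ge\chi_{\{x\}}$ with $\|u_n\|_{\NX}\to 0$ and their continuous representatives $v_n$ (for which $\sup_B v_n\to 0$ by the pointwise bound above), the freedom to modify at $x$ produces admissible competitors of vanishing norm for $C_X$ applied to a fixed ball about $x$, contradicting the uniform positive lower bound supplied by Proposition~\ref{pro:cap_estimate_below} for balls of fixed positive radius. Hence $C_X(\{x\})>0$, which together with the preceding display delivers the proposition.
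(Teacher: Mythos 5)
There is a genuine gap, and it sits exactly where you suspected: the circularity in establishing $v(x)\ge 1$ is not broken by your dichotomy-plus-contradiction argument. Under the assumption $C_X(\{x\})=0$, you pick competitors $u_n\ge\chi_{\{x\}}$ with $\|u_n\|_{\NX}\to 0$ and observe that their continuous representatives $v_n$ satisfy $\sup_B v_n\to 0$. This is all consistent; there is no contradiction here, because nothing forces $v_n(x)$ to stay near $1$ once $x$ may lie in the exceptional set $\{u_n\ne v_n\}$. Your final step claims that ``the freedom to modify at $x$ produces admissible competitors of vanishing norm for $C_X$ applied to a fixed ball about $x$,'' but a function modified only at the single point $x$ (so that its value there is $1$) is still not $\ge 1$ on any ball, hence is not admissible for the capacity of a ball; moreover such a modification destroys continuity, so the upgrade from $\esssup$ to $\sup$ is lost. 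Proposition~\ref{pro:cap_estimate_below} therefore never gets invoked on anything to which it applies, and the contradiction does not materialize.

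The further structural problem is that you have no route to the positivity of $C_X(\{x\})$ via outer regularity for $X$ itself: Proposition~\ref{pro:Cap-out-regular_0} requires the Vitali--Carath\'eodory property for the base lattice, and the hypotheses of the present proposition do not supply that for a general $X$. The paper resolves this by switching to the concrete auxiliary lattice $Y=Y(B)$, which is $L^q(B)$, $L^s(\log L)^\alpha(B)$, or $L^{s,1}(B)$ according to the case. Each of these has absolutely continuous norm and satisfies~\ref{df:BFS.finmeasfinnorm}, so Proposition~\ref{pro:VitaliCarath} gives the Vitali--Carath\'eodory property for $Y$ for free, and Proposition~\ref{pro:Cap-out-regular_0} then applies to $C_Y$ on zero sets. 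If $C_Y(\{x\})=0$, there are open $G\ni x$ inside $B$ with $C_Y(G)$ arbitrarily small; but any nonempty open set has positive measure, so Proposition~\ref{pro:cap_estimate_below} applied with $Y$ gives a uniform positive lower bound $C_Y(G)\ge c(B)>0$, a contradiction. Thus $C_Y(\{x\})>0$, the local embedding gives $C_Y(\{x\})\lesssim C_X(\{x\})$, and the quantitative estimate follows from Proposition~\ref{pro:cap_estimate_below} applied to $X$. Your final reduction step (Proposition~\ref{pro:cap_estimate_below} once positivity is known) is the same as the paper's; it is the positivity argument that needs replacement along these lines.
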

\begin{proof}
Let $B \subset \Pcal$ be fixed. Let $Y=Y(B)$ be either $L^q(B)$, or $L^s(\log L)^\alpha(B)$, or $L^{s,1}(B)$ as in the proposition's hypotheses. Then, $Y$ has an absolutely continuous norm and satisfies \ref{df:BFS.finmeasfinnorm}, whence $Y$ has the Vitali--Carath\'eodory property by Proposition~\ref{pro:VitaliCarath}. 
Let $x\in B$ and suppose for a moment that $C_Y(\{x\}) = 0$. Then, Proposition~\ref{pro:Cap-out-regular_0} yields that $C_Y(\{x\}) = \inf_{G\ni x} C_Y(G)$, where $G$ is open. Such a set $G$ has positive measure and hence $C_Y(G) > 0$. Thus, we can estimate $C_Y(G)\ge c(B) > 0$ whenever $G \subset B$ by Proposition~\ref{pro:cap_estimate_below}. Hence, $C_Y(\{x\}) \ge c(B) > 0$, which contradicts the assumption $C_Y(\{x\}) = 0$.
Therefore, $0< C_Y(\{x\}) \lesssim C_X(\{x\})$ and the claimed estimate follows from Proposition~\ref{pro:cap_estimate_below}.
\end{proof}
%
%
In view of the previous proposition, we see that the Newtonian functions considered in Propositions~\ref{pro:esssup_est_p>s}, \ref{pro:esssup_est_p=s}, and~\ref{pro:esssup_est_p=sLorentz} are not only $C_X$-essentially bounded on all balls, but bounded everywhere in the respective balls, provided that $\Pcal$ is locally compact.

Similarly, we will next show that Newtonian functions not only have continuous representatives, but in fact are continuous. Thus, the claim is stronger than its analogue for Sobolev functions in $\Rbb^n$.
\begin{thm}
\label{thm:NX_cont}
Under the assumptions of Proposition~\ref{pro:singleton-cap}, every $u\in\NX$ is continuous.
\end{thm}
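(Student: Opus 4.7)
The plan is to combine Theorem~\ref{thm:noncompl-cont}, Proposition~\ref{pro:singleton-cap}, and Proposition~\ref{pro:wqc-continuous}, which together give everything needed almost immediately. No new hard estimate should be required at this point.

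First I would observe that, under the hypotheses at hand, continuous functions are dense in $\NX$. Indeed, Theorem~\ref{thm:noncompl-cont} applies and yields, for each $u\in\NX$, a function $v\in\NX\cap\Ccal(\Pcal)$ with $u=v$ $C_X$-q.e. Then $u=v$ a.e., so $\|u-v\|_X=0$; moreover, by the cited result $\|u-v\|_\NX=0$ (see \cite[Proposition~6.15]{Mal1}). Hence $\Ccal(\Pcal)\cap\NX$ is dense in $\NX$, and Proposition~\ref{pro:newt.fcns-are-wqcont} implies that every $u\in\NX$ is weakly quasi-continuous.

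Next I would set up the cover required by Proposition~\ref{pro:wqc-continuous}. Fix $x_0\in\Pcal$ and take $\Pcal_k=B(x_0,k)$ for $k\in\Nbb$, which is an open cover of $\Pcal$. Applying Proposition~\ref{pro:singleton-cap} with the ball $B=\Pcal_k$ (of radius $k$), every point $x\in\Pcal_k$ satisfies
\[
  C_X(\{x\}) \gtrsim \frac{\meas{\Pcal_k}^{1/q}}{k+1} \eqcolon \delta_k > 0,
\]
where the implicit constant depends only on the embedding data and on $\Pcal_k$, and the strict positivity uses that $\mu$ is non-atomic (forced by the Poincar\'e inequality) together with $\meas{B(x_0,k)}>0$.

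Finally I would invoke Proposition~\ref{pro:wqc-continuous}: its hypotheses are now satisfied, and since every Newtonian function is weakly quasi-continuous by the first paragraph, the conclusion $\NX\subset\Ccal(\Pcal)$ follows. There is no genuine obstacle here; the only point worth double-checking is that the density of continuous functions holding modulo the seminorm $\|\cdot\|_\NX$ is enough to trigger Proposition~\ref{pro:newt.fcns-are-wqcont}, which it is, since the argument there relies only on approximation in the Newtonian quasi-seminorm together with the Egorov-type theorem in \cite[Corollary~7.2]{Mal1}.
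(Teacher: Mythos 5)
Your proof is correct, and it reaches the conclusion by a slightly longer route than the paper does. The paper argues directly: by Theorem~\ref{thm:noncompl-cont} there is $v\in\NX\cap\Ccal(\Pcal)$ with $u=v$ q.e., so the set $E=\{x: u(x)\neq v(x)\}$ has $C_X(E)=0$; since Proposition~\ref{pro:singleton-cap} forces $C_X(\{x\})>0$ for every $x$, the set $E$ cannot contain any point, hence $E=\emptyset$ and $u=v$ is continuous. You instead first extract density of $\Ccal(\Pcal)\cap\NX$ in $\NX$ from Theorem~\ref{thm:noncompl-cont}, pass through Proposition~\ref{pro:newt.fcns-are-wqcont} to get weak quasi-continuity of every $u\in\NX$, and then invoke Proposition~\ref{pro:wqc-continuous} with the cover $\Pcal_k=B(x_0,k)$ and the singleton-capacity lower bound from Proposition~\ref{pro:singleton-cap}. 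Both arguments hinge on the same two ingredients (existence of a continuous representative and positivity of singleton capacity); the paper's version is more economical because it never needs the density/Egorov machinery, while yours exhibits the result as a consequence of the abstract Section~4 framework. A small simplification available to you: once you have $u=v$ q.e.\@ with $v$ continuous, $u$ is \emph{already} weakly quasi-continuous by definition (take $E$ to be the zero-capacity exceptional set), so the excursion through density and Proposition~\ref{pro:newt.fcns-are-wqcont} can be dropped, leaving only Proposition~\ref{pro:wqc-continuous} plus the singleton bound.
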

\begin{proof}
Let $u\in \NX$. By Theorem~\ref{thm:noncompl-cont}, there is $v \in \NX \cap \Ccal(\Pcal)$ such that $u=v$ q.e. In other words, the set $E=\{x\in \Pcal: u(x)\neq v(x)\}$ has zero capacity. According to Proposition~\ref{pro:singleton-cap}, $E$ cannot contain any single point $x\in \Pcal$ as it would have positive capacity then. Therefore, $u=v$ everywhere in $\Pcal$, whence $u\in \Ccal(\Pcal)$.
\end{proof}
%
%
%
%
\section*{Appendix: Calculus for weak upper gradients}
\setcounter{section}{1}
\setcounter{thm}{0}
\renewcommand{\thesection}{\Alph{section}}
Throughout the paper, several tools for working with weak upper gradients have been needed. Note that none of the results in this section requires the measure to be doubling.
\begin{thm}[Product rule]
\label{thm:product_rule}
Let $u,v : \Pcal \to \overline{\Rbb}$ be measurable. Assume that there are measurable $g,h\ge 0$ such that 
$u\circ \gamma, v\circ \gamma \in \AC([0, l_\gamma])$ with 
\[
  |(u\circ \gamma)'(t)| \le g(\gamma(t)) \quad \mbox{and} \quad |(v\circ \gamma)'(t)| \le h(\gamma(t))\quad\mbox{for a.e.\@ $t\in (0, l_\gamma)$}
\]
for $\Mod_X$-a.e.\@ curve $\gamma: [0, l_\gamma]\to \Pcal$.
In particular, it suffices to assume that $g, h \in X$ are $X$-weak upper gradients of $u,v\in DX$, respectively.
Then, $|u|h + |v|g$ is an $X$-weak upper gradient of $uv$.
\end{thm}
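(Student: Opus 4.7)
The plan is to verify the defining inequality for a weak upper gradient one curve at a time. Let $\Gamma_1$ (resp.\ $\Gamma_2$) denote the family of curves $\gamma\colon[0,l_\gamma]\to\Pcal$ along which the hypothesised absolute continuity of $u\circ\gamma$ (resp.\ $v\circ\gamma$) and the pointwise a.e.\ bound by $g\circ\gamma$ (resp.\ $h\circ\gamma$) fail. Each of these families has zero $X$-modulus, and so does their union since the $X$-modulus is $\sigma$-quasi-additive (or even $r$-subadditive after the Aoki--Rolewicz renorming); therefore the inequality only needs to be verified for an arbitrary curve $\gamma\notin\Gamma_1\cup\Gamma_2$.

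For such a $\gamma$, both $u\circ\gamma$ and $v\circ\gamma$ are real-valued, continuous (hence bounded) and absolutely continuous on the compact interval $[0,l_\gamma]$. A classical result of real analysis then gives that the pointwise product $(uv)\circ\gamma=(u\circ\gamma)(v\circ\gamma)$ is absolutely continuous on $[0,l_\gamma]$ and obeys the Leibniz rule
\[
  ((uv)\circ\gamma)'(t)=(u\circ\gamma)'(t)\,(v\circ\gamma)(t)+(u\circ\gamma)(t)\,(v\circ\gamma)'(t)
\]
for almost every $t\in(0,l_\gamma)$. Substituting the a.e.\ bounds assumed on the two factor-derivatives gives
\[
  |((uv)\circ\gamma)'(t)|\le |v(\gamma(t))|\,g(\gamma(t))+|u(\gamma(t))|\,h(\gamma(t))=(|u|h+|v|g)(\gamma(t))
\]
for almost every $t$. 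The fundamental theorem of calculus for absolutely continuous functions, together with the arc-length parametrization of $\gamma$, then yields
\[
  |(uv)(\gamma(0))-(uv)(\gamma(l_\gamma))|\le\int_0^{l_\gamma}(|u|h+|v|g)(\gamma(t))\,dt=\int_\gamma(|u|h+|v|g)\,ds,
\]
which is precisely the upper gradient inequality for $uv$ along $\gamma$. Since $u,v,g,h$ are measurable, so is $|u|h+|v|g$, and the inequality above holds for $\Mod_X$-a.e.\ curve; this is exactly what is required for $|u|h+|v|g$ to be an $X$-weak upper gradient of $uv$.

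The ``in particular'' clause reduces to the standard curvewise characterization of Dirichlet functions: if $g\in X$ is an $X$-weak upper gradient of $u\in DX$, then $u\circ\gamma$ is absolutely continuous with $|(u\circ\gamma)'|\le g\circ\gamma$ a.e.\ along $\Mod_X$-a.e.\ curve $\gamma$ (cf.~\cite[Proposition~6.18]{Mal1}), and analogously for $v,h$; intersecting the two exceptional curve families (again of zero $X$-modulus) reduces this case to the pointwise one already treated.

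The only substantive ingredient is the classical fact that the product of two absolutely continuous functions on a compact interval is absolutely continuous with derivative given by the usual Leibniz rule; everything else is measurability bookkeeping and the subadditivity of $\Mod_X$. I do not anticipate any genuine obstacle, so this should really be viewed as a routine verification organised around the observation that weak upper gradients are best handled by isolating a single generic curve.
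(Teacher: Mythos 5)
Your proof is correct and follows essentially the same route as the paper's: isolate a generic curve outside a zero-modulus exceptional family, apply the Leibniz rule for products of absolutely continuous functions on a compact interval, and then bound the derivative pointwise to conclude. The paper packages the last step (passing from the a.e.\ derivative bound along curves to the $X$-weak upper gradient property) as a citation to a lemma in~\cite{Mal1}, whereas you spell out the fundamental theorem of calculus explicitly, but the argument is the same.
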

\begin{proof}
If $g,h\in X$ are $X$-weak upper gradients of $u,v \in DX$, then the hypotheses are satisfied by~\cite[Theorem~6.7 and Lemma~6.8]{Mal1}.

Now, let $\gamma: [0, l_\gamma] \to \Pcal$ be a curve for which the theorem's conditions on $u\circ \gamma, v\circ\gamma$ and $g,h$ are satisfied. Let $w = uv$. Then, $w\circ \gamma \in \AC([0, l_\gamma])$ by~\cite[Lemma~1.58]{BjoBjo} and
\begin{align*}
  |(w\circ \gamma)'(t)| & = | u(\gamma(t))(v\circ \gamma)'(t) + v(\gamma(t))(u\circ \gamma)'(t)| \\
	& \le \left|u(\gamma(t))\right| h(\gamma(t)) + \left|v(\gamma(t))\right|g(\gamma(t)) \qquad\mbox{\quad for a.e.\@ $t\in (0, l_\gamma)$.}
\end{align*}
Finally, $|u|h+|v|g$ is measurable and hence an $X$-weak upper gradient of $w$ by~\cite[Lemma~6.8]{Mal1}.
\end{proof}
Observe that $|u|h + |v|g$ need not be a minimal $X$-weak upper gradient of $uv$ even if $g$ and $h$ were minimal $X$-weak upper gradients of $u$ and $v$, respectively. For example, suppose that $X = L^p(0,1)$ with $p\ge 1$ and let $u(t)=g(t)=e^t$ and $v(t)=h(t)=e^{-t}$ for $t\in(0,1)$. Then, $uv \equiv 1$, whence $0$ is an upper gradient of $uv$, but $|u|h + |v|g \equiv 2$.
\begin{thm}[Chain rule]
\label{thm:chain_rule}
Suppose that $g_u \in X$ is a minimal $X$-weak upper gradient of $u\in\NX$. Let $\phi: \overline{\Rbb} \to \overline{\Rbb}$ be locally Lipschitz on $\Rbb$. Then, $|\phi'\circ u|g_u$ is an $X$-weak upper gradient of $\phi \circ u$, where $\phi'\circ u \coloneq 0$ wherever undefined. Moreover, if $|\phi'\circ u|g_u \in X$, then it is a minimal $X$-weak upper gradient of $\phi\circ u$.
\end{thm}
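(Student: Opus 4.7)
The plan is to reduce the first assertion to a pointwise a.e.\ bound along $\Mod_X$-a.e.\ curve and then invoke \cite[Lemma~6.8]{Mal1}, which promotes such a bound to an $X$-weak upper gradient. By \cite[Theorem~6.7]{Mal1}, for $\Mod_X$-a.e.\ curve $\gamma\colon[0,l_\gamma]\to\Pcal$ the composition $u\circ\gamma$ is real-valued and absolutely continuous on $[0,l_\gamma]$ with $|(u\circ\gamma)'(t)|\le g_u(\gamma(t))$ a.e. Fix such a $\gamma$; since $u\circ\gamma$ is continuous on a compact interval, its image $K\coloneq u(\gamma([0,l_\gamma]))$ is a compact subset of $\Rbb$, so $\phi|_K$ is Lipschitz and consequently $\phi\circ u\circ\gamma$ is absolutely continuous on $[0,l_\gamma]$.

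The technical core is to verify
\[
  |(\phi\circ u\circ\gamma)'(t)|\le|\phi'(u(\gamma(t)))|\,g_u(\gamma(t))\quad\text{for a.e.\ } t\in(0,l_\gamma),
\]
with the convention $\phi'(u(\gamma(t))):=0$ whenever $\phi$ is not differentiable at $u(\gamma(t))$. Let $D\subset\Rbb$ be the set where $\phi$ fails to be differentiable; by Rademacher's theorem applied locally, $D$ has Lebesgue measure zero. For $t\notin(u\circ\gamma)^{-1}(D)$ at which $u\circ\gamma$ is differentiable, the classical chain rule gives $(\phi\circ u\circ\gamma)'(t)=\phi'(u(\gamma(t)))(u\circ\gamma)'(t)$, yielding the desired bound. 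For $t\in(u\circ\gamma)^{-1}(D)$ the right-hand side vanishes by convention, so it remains to show $(\phi\circ u\circ\gamma)'(t)=0$ a.e.\ on $(u\circ\gamma)^{-1}(D)$. For this I would use the Luzin-type property of absolutely continuous functions: if $f\colon I\to\Rbb$ is AC and $N\subset\Rbb$ has Lebesgue measure zero, then $f'(t)=0$ a.e.\ on $f^{-1}(N)$ (otherwise $|f(E)|\ge\int_E|f'|>0$ on some $E\subset f^{-1}(N)$ with $|E|>0$, contradicting $f(E)\subset N$). Applying this to $f=\phi\circ u\circ\gamma$ with $N=\phi(D\cap K)$ (which is null because $\phi$ is Lipschitz on $K$ and $|D\cap K|=0$) finishes the pointwise bound, and \cite[Lemma~6.8]{Mal1} concludes the first assertion.

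For minimality, assume $h\coloneq|\phi'\circ u|g_u\in X$, so that $\phi\circ u\in\NX$ has a minimal $X$-weak upper gradient $g_{\phi\circ u}$ satisfying $g_{\phi\circ u}\le h$ a.e.\ by the first part. For the opposite inequality, combining the identity above with $|(\phi\circ u\circ\gamma)'(t)|\le g_{\phi\circ u}(\gamma(t))$ yields $|\phi'(u(\gamma(t)))|\,|(u\circ\gamma)'(t)|\le g_{\phi\circ u}(\gamma(t))$ a.e.\ along $\Mod_X$-a.e.\ curve. Exhausting $\{|\phi'\circ u|>0\}=\bigcup_{k\in\Nbb}A_k$ with $A_k\coloneq\{1/k<|\phi'\circ u|\le k\}$, and defining $\tilde g_k\coloneq\chi_{A_k}g_{\phi\circ u}/|\phi'\circ u|+\chi_{\Pcal\setminus A_k}g_u$, the locality of minimal weak upper gradients combined with \cite[Lemma~6.8]{Mal1} shows that each $\tilde g_k$ is an $X$-weak upper gradient of $u$. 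Pointwise minimality of $g_u$ then forces $|\phi'\circ u|g_u\le g_{\phi\circ u}$ a.e.\ on $A_k$, and hence a.e.\ on $\{|\phi'\circ u|>0\}$; on the complement $h$ vanishes trivially. The principal obstacle is the chain-rule identity on $(u\circ\gamma)^{-1}(D)$, which boils down to the fact that an AC function cannot accrue derivative where it lands in a null set; once this is in hand, the minimality claim reduces to routine bookkeeping.
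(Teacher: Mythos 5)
Your proof takes essentially the same route as the paper: reduce via \cite[Theorem~6.7 and Lemma~6.8]{Mal1} to absolutely continuous compositions along $\Mod_X$-a.e.\ curve, use compactness of the image and local Lipschitzness to get absolute continuity of $\phi\circ u\circ\gamma$, establish the pointwise chain-rule bound, and then invert the inequality for minimality. Two differences are worth noting.

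First, for the chain rule along a curve the paper simply cites Mal\'y--Ziemer \cite[Theorem~1.74\,(i)]{MalZie}, which is exactly the Luzin-type AC fact you are reproving by hand. Your plan here is sound in outline, but the justification of the key lemma is not quite right as stated: the inequality $|f(E)|\ge\int_E |f'|\,dt$ is \emph{false} for general absolutely continuous $f$ (e.g.\ $f=\sin$ on $[0,2\pi]$ with $E=[0,2\pi]$ gives $|f(E)|=2<4=\int_E|f'|$). The correct way to conclude that $f'=0$ a.e.\ on $f^{-1}(N)$ when $|N|=0$ is Banach's indicatrix formula $\int_E|f'|\,dt=\int_\Rbb \#(f^{-1}(y)\cap E)\,dy$, from which the right-hand side vanishes whenever $f(E)\subset N$; alternatively, one can invoke a Vitali-type covering argument on a subset where $|f'|\ge\eps$. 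So the underlying fact you want is true, but the proof you sketch of it has a gap. Since the paper outsources this to \cite{MalZie}, the cleanest fix is simply to cite that theorem as the paper does.

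Second, in the minimality step your pasting via $A_k=\{1/k<|\phi'\circ u|\le k\}$ is a legitimate, arguably more careful, device than the paper's direct appeal to pointwise minimality of $g_u$: it guarantees that your candidate gradient $\tilde g_k\le k\,g_{\phi\circ u}+g_u$ lies in $X$, so that the pointwise minimality statement (which in the paper's framework is formulated among upper gradients \emph{in} $X$) applies without further comment, whereas the paper's $g_{\phi\circ u}/|\phi'\circ u|$ a priori may be $\infty$ on a non-null set. Both versions ultimately reach the same conclusion, but your variant makes the integrability bookkeeping explicit.
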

\begin{proof}
It follows from~\cite[Theorem~6.7 and Lemma~6.8]{Mal1} that we have for $\Mod_X$-a.e.\@ curve $\gamma : [0, l_\gamma]\to\Pcal$ that $u\circ \gamma$ is absolutely continuous and
\[
  |(u\circ \gamma)'(t)| \le g_u(\gamma(t)) \quad\mbox{for a.e.\@ $t\in (0, l_\gamma)$}.
\]
Let $I=u\circ \gamma([0, l_\gamma])$. Then, $I$ is a compact (possibly degenerate) interval in $\Rbb$. Thus, $\phi|_I$ is Lipschitz continuous, whence $\phi\circ(u\circ \gamma) \in \AC([0, l_\gamma])$ by~\cite[Lemma~1.58]{BjoBjo} and $(\phi \circ u\circ \gamma)'(t)$ exists for a.e.\@ $t\in (0, l_\gamma)$. We may apply the chain rule of Mal\'{y} and Ziemer~\cite[Theorem 1.74\,(i)]{MalZie} to obtain that
\begin{equation}
   \label{eq:chain_r}
   |(\phi\circ u \circ \gamma)'(t)| = |\phi'(u(\gamma(t)))|\, |(u \circ \gamma)'(t)| \le |\phi'(u(\gamma(t)))|  g_u(\gamma(t))
\end{equation}
for a.e.\@ $t\in(0, l_\gamma)$ provided that we interpret the expression in the middle as $0$ whenever $(u \circ \gamma)'(t)=0$ even if $\phi'(u(\gamma(t)))$ is not defined. Note that the set, where $(u \circ \gamma)'(t) \neq 0$ and $\phi'(u(\gamma(t)))$ does not exist, has zero measure in $[0, l_\gamma]$. By~\cite[Lemma~6.8]{Mal1}, we see that $|\phi'\circ u|g_u$ is an $X$-weak upper gradient of $\phi\circ u$.

Suppose now that $|\phi'\circ u|g_u \in X$. Then, $\phi\circ u \in DX$ and there exists a minimal $X$-weak upper gradient $g_{\phi\circ u} \in X$ of $\phi\circ u$. From \eqref{eq:chain_r} and~\cite[Lemma~6.8]{Mal1}, we also have for a.e.\@ $t\in(0, l_\gamma)$ that
\[
  |(u \circ \gamma)'(t)| \le \frac{|(\phi\circ u \circ \gamma)'(t)|}{|\phi'(u(\gamma(t)))|} \le \frac{g_{\phi\circ u}(\gamma(t))}{|\phi'(u(\gamma(t)))|},
\]
where the fractions are interpreted as $\infty$ whenever $\phi'(u(\gamma(t)))=0$.
Applying~\cite[Lemma~6.8]{Mal1} again, we see that $g_{\phi\circ u} / |\phi'\circ u|$ is an $X$-weak upper gradient of $u$.

Minimality of $g_{\phi\circ u}$ yields that $g_{\phi\circ u} \le |\phi'\circ u|g_u$ a.e. Similarly, minimality of $g_u$ then yields that $g_u \le g_{\phi\circ u} / |\phi'\circ u|$ a.e. Hence, $g_{\phi\circ u} = g_u |\phi'\circ u|$ a.e.\@ in $\Pcal$.
\end{proof}
%
%
\begin{pro}
Let $g_u, g_v \in X$ be minimal $X$-weak upper gradients of $u, v\in DX$, respectively. Then, $g\coloneq g_u \chi_{\{u>v\}} + g_v \chi_{\{u\le v\}}$ is a minimal $X$-weak upper gradient of $w\coloneq \max\{u, v\}$.
\end{pro}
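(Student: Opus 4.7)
The plan is to verify in two steps that $g$ is a minimal $X$-weak upper gradient of $w$. First I would check that $g$ is an $X$-weak upper gradient of $w$ via the curve characterization of \cite[Theorem~6.7 and Lemma~6.8]{Mal1}. Second I would invoke the identity ``minimal $X$-weak upper gradients coincide a.e.\@ on sets where the underlying functions coincide'' (Corollary~\ref{cor:u=v_gu=gv}) to bootstrap this into minimality. Note that $g\in X$ is automatic: $0\le g\le g_u+g_v$, and $X$ has the lattice property.

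For the upper-gradient step, pick a $\Mod_X$-a.e.\@ curve $\gamma\colon[0,l_\gamma]\to\Pcal$ along which $u\circ\gamma$ and $v\circ\gamma$ are absolutely continuous with $|(u\circ\gamma)'(t)|\le g_u(\gamma(t))$ and $|(v\circ\gamma)'(t)|\le g_v(\gamma(t))$ for a.e.\@ $t$. Then $w\circ\gamma=\max\{u\circ\gamma,v\circ\gamma\}$ is also absolutely continuous. At a.e.\@ $t$ with $u(\gamma(t))>v(\gamma(t))$, continuity of $(u-v)\circ\gamma$ places $t$ in an open subset of $[0,l_\gamma]$ on which $w\circ\gamma=u\circ\gamma$, so $|(w\circ\gamma)'(t)|\le g_u(\gamma(t))$; symmetrically, $|(w\circ\gamma)'(t)|\le g_v(\gamma(t))$ a.e.\@ on $\{u\circ\gamma<v\circ\gamma\}$. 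On the remaining coincidence set $\{u\circ\gamma=v\circ\gamma\}$, a.e.\@ point is a Lebesgue density point of this set, and at such a point both $(u\circ\gamma)'(t)$ and $(v\circ\gamma)'(t)$ exist and equal $(w\circ\gamma)'(t)$, so $|(w\circ\gamma)'(t)|\le g_v(\gamma(t))$. Combining the three cases yields $|(w\circ\gamma)'(t)|\le g(\gamma(t))$ for a.e.\@ $t\in(0,l_\gamma)$, and \cite[Lemma~6.8]{Mal1} promotes this into the $X$-weak upper gradient property of $g$ for $w$.

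For minimality, let $g_w\in X$ be \emph{the} minimal $X$-weak upper gradient of $w$. One direction is free by the previous step: $g_w\le g$ a.e. For the reverse, Corollary~\ref{cor:u=v_gu=gv} gives that on $\{u>v\}$, where $w=u$, we have $g_w=g_u$ a.e., and on $\{u\le v\}$, where $w=v$, we have $g_w=g_v$ a.e. Adding the two identities shows $g_w=g_u\chi_{\{u>v\}}+g_v\chi_{\{u\le v\}}=g$ a.e., so $g$ is itself a minimal $X$-weak upper gradient of $w$.

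The only subtlety worth flagging is the bookkeeping on the coincidence set $\{u\circ\gamma=v\circ\gamma\}\subset[0,l_\gamma]$: at non-density points of this set (a Lebesgue-null subset) the derivatives $(u\circ\gamma)'$ and $(v\circ\gamma)'$ may well disagree, but these points are absorbed into the a.e.\@ exception, so the derivative inequality we need against $g\circ\gamma$ is unaffected. Everything else is routine once the two ingredients --- the curve-wise characterization of $X$-weak upper gradients and the coincidence-set identity Corollary~\ref{cor:u=v_gu=gv} --- are in hand.
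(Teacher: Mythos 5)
Your upper-gradient step is sound and matches the paper's argument in substance: you verify the derivative bound curve-wise on the three sets $\{u\circ\gamma>v\circ\gamma\}$, $\{u\circ\gamma<v\circ\gamma\}$, and the coincidence set, with the coincidence set handled by a density-point argument (the paper instead observes that if $(u\circ\gamma)'(t)\ne(v\circ\gamma)'(t)$ there, then $(w\circ\gamma)'(t)$ cannot exist; both are fine).

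The minimality step, however, is circular. Corollary~\ref{cor:u=v_gu=gv} is not available at this point: in the paper it is a \emph{consequence} of this very proposition, proved by taking $w=\max\{u,v\}$ and using $g_w=g_u\chi_{\{u>v\}}+g_v\chi_{\{u\le v\}}$ on the coincidence set $\{u=v\}$. Invoking the corollary to prove the proposition is therefore a vicious circle. The paper avoids this by extracting minimality directly from the already-proven upper-gradient part: since $-u=\max\{-u,-w\}$, the upper-gradient step (not the minimality statement) applied to the pair $(-u,-w)$ shows that $g_u\chi_{\{u<w\}}+g_w\chi_{\{u\ge w\}}$ is an $X$-weak upper gradient of $-u$, hence of $u$; minimality of $g_u$ then forces $g_u\le g_w$ a.e.\@ on $\{u\ge w\}\supset\{u>v\}$, and combined with $g_w\le g=g_u$ there (from minimality of $g_w$) one gets $g_w=g$ a.e.\@ on $\{u>v\}$. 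The same argument with $-v=\max\{-v,-w\}$ handles $\{u\le v\}$. You need to replace your appeal to Corollary~\ref{cor:u=v_gu=gv} with this kind of direct bootstrapping from the upper-gradient step, or else give an independent proof of the locality statement before using it.
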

\begin{proof}
For $\Mod_X$-a.e.\@ curve $\gamma: [0, l_\gamma] \to \Pcal$, the functions $u\circ \gamma$ and $v\circ \gamma$ are absolutely continuous and
\[
  |(u\circ \gamma)'(t)| \le g_u(\gamma(t)) \quad \mbox{and} \quad |(v\circ \gamma)'(t)| \le g_v(\gamma(t))\quad\mbox{for a.e.\@ $t\in (0, l_\gamma)$}
\]
by~\cite[Theorem~6.7 and Lemma~6.8]{Mal1}. Then, $w\circ \gamma \in \AC([0, l_\gamma])$ by~\cite[Lemma~1.58]{BjoBjo}.

The set $\gamma^{-1}(\{x\in\Pcal: u(x) > v(x) \})$ is open in $[0, l_\gamma]$, whence $|(w\circ \gamma)'(t)| \le g_u(\gamma(t))$ for a.e.\@ $t\in \gamma^{-1}(\{x\in\Pcal: u(x) > v(x) \})$. Similarly, $|(w\circ \gamma)'(t)| \le g_v(\gamma(t))$ for a.e.\@ $t\in \gamma^{-1}(\{x\in\Pcal: u(x) < v(x) \})$.

It remains to discuss what happens on the set where $u=v$. Let $t\in \gamma^{-1}(\{x\in\Pcal: u(x) = v(x) \})$ be chosen such that both $(u\circ \gamma)'(t)$ and $(v\circ \gamma)'(t)$ exist. Then, either $(u\circ \gamma)'(t) = (v\circ \gamma)'(t) = (w\circ \gamma)'(t)$, or $(u\circ \gamma)'(t)\neq (v\circ \gamma)'(t)$ in which case $(w\circ \gamma)'(t)$ does not exist.

Therefore, $|(w\circ \gamma)'(t)| \le g(\gamma(t))$ for a.e.\@ $t\in (0, l_\gamma)$. By applying~\cite[Lemma~6.8]{Mal1} again, $g$ is an $X$-weak upper gradient of $w$, whence $w\in DX$ as $g\in X$.

There exists a minimal $X$-weak upper gradient $g_w \in X$ of $w$ by~\cite[Theorem~4.6]{Mal2}. Since $-u = \max\{-u, -w\}$, the previous part of the proof yields that $g_u \le g_u \chi_{\{u<w\}} + g_w \chi_{\{u \ge w\}}$ a.e.\@ due to the minimality of $g_u$. Hence, $g_u \le g_w \le g = g_u$ a.e.\@ on $\{x\in\Pcal: u(x) > v(x)\}$. Similarly, $g_v \le g_w \le g = g_v$ a.e.\@ on $\{x\in\Pcal: u(x) \le v(x)\}$. We have shown that $g_w = g$ a.e., so $g$ is a minimal $X$-weak upper gradient of $w$.
\end{proof}
%
%
The following claim shows that minimal weak upper gradients depend only on the local behavior of Newtonian functions.
%
%
\begin{cor}
\label{cor:u=v_gu=gv}
Let $g_u, g_v \in X$ be minimal $X$-weak upper gradients of $u, v\in DX$, respectively. Then,
$g_u = g_v$ a.e.\@ on $E\coloneq\{x\in\Pcal: u(x) = v(x)\}$. In particular, we obtain that $g_u = 0$ a.e.\@ on $\{x\in\Pcal: u(x) = c\}$ for any constant $c\in\Rbb$.
\end{cor}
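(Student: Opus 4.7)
The main equality will fall out of the previous proposition applied twice, swapping the roles of $u$ and $v$. First, the proposition gives that
\[
  g \coloneq g_u \chi_{\{u>v\}} + g_v \chi_{\{u \le v\}}
\]
is a minimal $X$-weak upper gradient of $\max\{u,v\}$. Applying it again with the pair $(v,u)$ in place of $(u,v)$ shows that
\[
  \tilde g \coloneq g_v \chi_{\{v>u\}} + g_u \chi_{\{v \le u\}}
\]
is also a minimal $X$-weak upper gradient of $\max\{v,u\} = \max\{u,v\}$. Since the minimal $X$-weak upper gradient of a function in $DX$ is unique up to equality a.e., we get $g = \tilde g$ a.e. The expressions coincide automatically on $\{u>v\}$ and on $\{v>u\}$, while on the set $E = \{u=v\}$ one has $g = g_v$ and $\tilde g = g_u$, so $g_u = g_v$ a.e.\ on $E$, as required.

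For the particular case, the plan is to specialize the main identity by taking the second function to be $v = \max\{u,c\}$. By the chain rule (Theorem~\ref{thm:chain_rule}) applied with the $1$-Lipschitz map $\phi(t)=\max\{t,c\}$ — which has $\phi'(t)=1$ for $t>c$, $\phi'(t)=0$ for $t<c$, and with $\phi'(c)\coloneq 0$ per the convention in the statement of that theorem — one sees that $v \in DX$ and its minimal $X$-weak upper gradient satisfies $g_v = g_u \chi_{\{u>c\}}$ a.e. (Note that $g_u \chi_{\{u>c\}} \in X$ by the lattice property, which is exactly the integrability hypothesis needed to invoke the minimality conclusion of the chain rule.) Now the main part of the corollary applied to the pair $(u,v)$ gives
\[
  g_u = g_v = g_u \chi_{\{u>c\}} \quad \text{a.e.\ on } \{u=v\} = \{u \ge c\}.
\]
Intersecting with $\{u=c\}\subset\{u\ge c\}$ makes the indicator vanish, so $g_u = 0$ a.e.\ on $\{u=c\}$.

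I do not expect any real obstacle here; the whole argument is organized around the preceding proposition plus the chain rule, and the only subtlety worth flagging is the verification that $\max\{u,c\}$ lies in $DX$ before applying the main part of the corollary to it, which is immediate from the lattice property as noted above.
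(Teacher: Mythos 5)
For the main equality, your argument coincides with the paper's: apply the preceding proposition to the pair $(u,v)$ and again to $(v,u)$, observe that both resulting formulas give minimal $X$-weak upper gradients of $\max\{u,v\}$, invoke a.e.\ uniqueness, and compare the two on $E$. That is exactly the paper's proof written out slightly more explicitly.

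For the ``in particular'' part, the paper takes a considerably shorter path: it simply applies the main identity with $v\equiv c$ the constant function. A constant clearly lies in $DX$ with minimal $X$-weak upper gradient $g_v = 0$, so $g_u = g_v = 0$ a.e.\ on $\{u=c\}$ is immediate. Your detour through the chain rule with $\phi(t)=\max\{t,c\}$ does arrive at the correct conclusion, and the lattice-property check you flag is the right one to make, but it is more work than needed. It also quietly leans on Theorem~\ref{thm:chain_rule} as stated, which posits $u\in\NX$, whereas the corollary only assumes $u\in DX$; the constant-function route sidesteps that hypothesis mismatch entirely.
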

\begin{proof}
Let $w= \max\{u, v\}$. Then, $g_w = g_u \chi_{\{u>v\}} + g_v \chi_{\{u \le v\}}$ is a minimal $X$-weak upper gradient of $w$. In particular, $g_w = g_v$ a.e.\@ on $E$. Switching the role of $u$ and $v$, we obtain that $g_w = g_u$ a.e.\@ on $E$. Consequently, $g_u = g_w = g_v$ a.e.\@ on $E$.

If now $v\equiv c$ is a constant function, then $g_v = 0$, completing the proof.
\end{proof}
%
%
%
%

\end{document}